\numberwithin{equation}{section}
\newtheorem{teo}{Theorem}[section]
\newtheorem{lema}[teo]{Lemma}
\newtheorem{cor}[teo]{Corollary}
\newtheorem{afir}[teo]{Claim}
\newtheorem{de}[teo]{Definition}
\newtheorem{pr}[teo]{Proposition}
\newtheorem{re}[teo]{Remark}
\newtheorem{q}{Question}
\newtheorem{bigteo}{Theorem}
\author[]{Felipe Nobili}
\title[]{Minimality of one invariant lamination for partially hyperbolic attractors}
\begin{document} 
\begin{abstract} We prove that at least one of the two invariant laminations of a strongly partially hyperbolic attractor with one-dimensional center bundle is minimal. This result extends   those in \cite{B2,B10} about minimal foliations for robustly transitive diffeomorphisms. 

\end{abstract}

\maketitle
\tableofcontents

\newpage
\section{Introduction}

In Hyperbolic Theory, a transitive attractor holds many robust properties such as being a homoclinic class. In particular, its invariant manifolds intersect the attractor in a dense subset of it. Given that non-hyperbolic sets  appears in a relevant subset of dynamical systems (open sets of $\operatorname{Diff}^1(M)$), it is natural to investigate whether these robust properties survives in the non-hyperbolic settings. \smallskip

In this work we deals with transitive attractors that present a weaker form of hyperbolicity known as \emph{partial hyperbolicity}.  In this situation we also have invariant submanifolds associated to the expanding and contracting bundles. As in the hyperbolic case, we can wonder if these submanifolds intersect the attractor in a dense subset.\smallskip

A simpler case to start with is when the center bundle is one-dimensional. This gives two properties of the attractor that play an important role in our study. First, this guarantees that the attractor is far from homoclinic tangencies. Secondly,  it gives rise to invariant central curves for the hyperbolic periodic points of the attractor (see Section~\ref{ccpp}). \smallskip

 In \cite{B2}, the case of 3-dimensional robustly transitive diffeomorphisms was investigated. It was proved that the leaves of at least one of the invariant foliations, those that integrate the extremal bundles of the partial hyperbolic splitting, is dense in the ambient manifold. Later, this result was extended to higher dimensions in \cite{B10}.\smallskip

Our main result translates \cite{B2,B10} to the context of robustly transitive attractors. Let us state our results in a more precise way.\smallskip

\section{Statemant of the results}

Let $\operatorname{Diff}^1(M)$ denote the space of $C^1$ diffeomorphisms from a compact Riemannian manifold $M$ to itself, endowed with the usual uniform $C^1$-topology. Consider a diffeomorphism  $f \in \operatorname{Diff}^1(M)$ and an $f$-invariant set $\Lambda$ over whom the tangent bundle  admits a partially hyperbolic splitting $T_\Lambda M = E^{s}\oplus E^c\oplus E^u$. This means that the extremal subbundles $E^s$ and $E^u$ are, respectively, uniformly contracting and uniformly expanding by the action of the derivative $Df$ of $f$, and that $E^c$ has an intermadiate behaviour\footnote{We refer to Appendix B of \cite{B3} for a precise definition and a list of elementary properties of partially hyperbolic systems.}. If both $E^s$ and $E^u$ are nontrivial, we say that $\Lambda$ is a \emph{strongly partially hyperbolic set}. When the whole manifold $M$ is strongly partially hyperbolic, we call
$f$ a \emph{strongly partially hyperbolic diffeomorphism}.\smallskip

In what follows we assume that the dimensions of $E^s(x)$, $E^c(x)$, and $E^u(x)$ do not depend on the point $x \in \Lambda$. Due to the $Df$-invariance and the continuity of the splitting, this assumption is automatically satisfied when $\Lambda$ is transitive.\smallskip

According to \cite{B1}, strong partial hyperbolicity leads to the existence of two laminations\footnote{See Section 6.2 of \cite{B15} and  Appendix IV of \cite{B28} for a more detailed account on this subject.}  $\mathcal{F}^s(f)$ and $\mathcal{F}^u(f)$ on the set $\Lambda$, named \emph{strong stable} and \emph{strong unstable} laminations, respectively. The leaves of $\mathcal{F}^s(f)$ and $\mathcal{F}^u(f)$ are dynamically defined immersed submanifolds of $M$ tangent to the bundles $E^s$ and $E^u$, respectively, at each point intersecting $\Lambda$. We denote by $\mathcal{F}^s(x,f)$ and $\mathcal{F}^u(x,f)$ the leaves of these laminations passing through the point $x \in \Lambda$. When there is no risk of misunderstanding, we omit $f$ in the above notations and just write $\mathcal{F}^s$, $\mathcal{F}^u$, $\mathcal{F}^s(x)$, and $\mathcal{F}^u(x)$ instead. 
\smallskip

When $\Lambda =M$, the laminations are commonly referred to as \emph{foliations}. We say that the foliation is \emph{minimal} if the orbit of each leaf by $f$ is a dense subset of $M$. Assuming that $M$ is connected, being a minimal foliation actually means that each leaf itself is dense in $M$ (see Lemma 4.5 of \cite{B2}).
When the strong stable (resp. unstable) foliation of a partially hyperbolic diffeomorphism is minimal we speak of $s$-minimality (resp. $u$-minimality).

\smallskip

Let us denote by $\mathrm{RTPH}_1(M)$ the open subset of $\operatorname{Diff}^1(M)$ consisting of robustly transitive, robustly non-hyperbolic, partially hyperbolic diffeomorphisms with one-dimensional center bundle. \smallskip

Our results is motivated by the following theorem  about minimal foliations.

\begin{teo}[\cite{B2},\cite{B10}] \label{t.1} There is an open and dense subset of $\mathrm{RTPH}_1(M)$ consisting of diffeomorphisms which are either robustly $s$-minimal or robsutly $u$-minimal.\end{teo}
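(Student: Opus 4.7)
The plan is to prove openness and density separately. Openness is automatic from the definitions: robust $s$-minimality and robust $u$-minimality are $C^1$-open conditions, so their union is open in $\mathrm{RTPH}_1(M)$. The substantive content is the density statement, which I would approach by working inside an arbitrary small $C^1$-neighborhood $\mathcal{U}$ of a fixed $f \in \mathrm{RTPH}_1(M)$ and producing inside $\mathcal{U}$ a diffeomorphism that is \emph{robustly} minimal in at least one of the extremal directions.

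First I would pass to a residual subset $\mathcal{R} \subset \mathcal{U}$ of Kupka--Smale diffeomorphisms enjoying the standard generic consequences of Hayashi's connecting lemma and the Bonatti--Crovisier closing lemma: for every $g \in \mathcal{R}$, hyperbolic periodic saddles of both available $s$-indices are dense in $M$, and $g$ itself is transitive. For such $g$, I would study the \emph{foliation minimal sets}: by Zorn, every nonempty compact $\mathcal{F}^s(g)$-saturated subset of $M$ contains a minimal one $K^s$, and similarly one has $K^u$ for the strong unstable foliation. The dichotomy to establish is that either $K^s = M$ (so $\mathcal{F}^s(g)$ is minimal), or the robust non-hyperbolicity, combined with the one-dimensional center, allows one to use central--unstable holonomies along heteroclinic connections between saddles of different $s$-indices to spread the strong unstable leaf of some periodic point until it becomes dense. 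Which alternative one achieves depends on which extremal bundle is receiving the denseness; the argument is symmetric in $s$ and $u$ and one of the two must succeed after a small perturbation.

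To promote mere density to \emph{robust} density, I would insert a blender à la Bonatti--Díaz into the construction. A $cs$-blender (resp. $cu$-blender) is a hyperbolic set whose stable (resp. unstable) set has an open intersection pattern with embedded disks tangent to a cone of the appropriate index, and this intersection pattern persists under $C^1$-perturbation. Once the strong unstable leaf of a periodic saddle is arranged to hit the characteristic region of a $cs$-blender, the density of that leaf is $C^1$-robust, and because this leaf is an invariant manifold of a periodic orbit that persists under perturbation, its density forces robust minimality of $\mathcal{F}^u$ via $f$-invariance and saturation. The symmetric construction with a $cu$-blender yields robust $s$-minimality.

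The main obstacle is the last step: exhibiting a blender inside an arbitrary $C^1$-neighborhood of a given $f \in \mathrm{RTPH}_1(M)$. Blenders demand an invariant hyperbolic set carrying a precise Markovian partition together with quantitative cone and expansion conditions, and their existence in the non-hyperbolic part of the dynamics is not automatic; this is exactly where the robust non-hyperbolicity hypothesis is used in a non-removable way, and it is the step that required new ideas to pass from the three-dimensional case of \cite{B2} to the higher-dimensional setting of \cite{B10}.
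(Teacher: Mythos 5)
Your high-level framing --- openness is automatic from the definitions, the content is density, and one should pass to a residual Kupka--Smale subset with dense saddles of both $s$-indices --- agrees with the paper's starting point (Theorem~\ref{t.generic}, Propositions~\ref{Ab} and~\ref{Bld}). Beyond that, the proposal diverges from the proof given here (which derives Theorem~\ref{t.1} as the case $U=M$ of Theorem~B) and leaves two genuine gaps. First, the dichotomy is asserted (``one of the two must succeed after a small perturbation'') but not produced. In the paper the dichotomy has a concrete source: each periodic saddle $p$ of large enough period carries an $f^{\pi(p)}$-invariant central curve $L(p)$ tangent to $E^c$ (Proposition~\ref{HHU}), and the periodic points are sorted into four classes $P_1,\dots,P_4$ according to the central dynamics on $\Gamma_p \subset L_U(p)$; at least one class is dense (Lemma~\ref{l.87}), and each dense class forces $s$- or $u$-minimality by intersecting $L_U(p)$ topologically transversely with a codimension-one disk $\Delta(x,\varepsilon)$ or $\nabla(p,\varepsilon)$ and invoking the accumulation criterion of Theorem~\ref{t.criterion2} (Proposition~\ref{p.cases}). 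Nothing in your proposal replaces this classification, and it is precisely where the one-dimensionality of $E^c$ is actually spent; ``central-unstable holonomies along heteroclinic connections'' gestures at something adjacent but is not a substitute.

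Second, the robustness step via blenders is both unjustified and unnecessary. You flag it yourself as the ``main obstacle,'' and indeed you give no mechanism for producing, near an arbitrary $f\in\mathrm{RTPH}_1(M)$, a blender whose characteristic region certifies minimality of a foliation rather than merely robust transitivity; moreover, density of one unstable leaf of a periodic orbit does not by itself give minimality of $\mathcal{F}^u$ --- you would still need every leaf to accumulate on that orbit. The paper's robustness argument is much softer: the criterion (Theorem~\ref{t.criterion2}, Corollary~\ref{t.criterion}) reduces to every strong stable leaf meeting a fixed local unstable manifold $W^u_{\varepsilon}(\mathcal{O}_f(p))$ transversely, and this persists on an open set by compactness of $\Lambda$ together with continuous dependence of the strong lamination on the diffeomorphism (Lemma~\ref{big}), with the $\lambda$-lemma doing the spreading. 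No blender is involved. Finally, the new ingredient needed to pass from the three-dimensional case of \cite{B2} to arbitrary dimension in \cite{B10} is the existence of invariant central curves in that generality (Theorem 2 of \cite{B10}, adapted here as Proposition~\ref{HHU}), not a blender construction.
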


\smallskip

In our study, we need a suitable notion of $u$- and $s$-minimality that fit to the context of partially hyperbolic proper subsets of $M$. A first idea is to require that the orbit of each leaf accumulates over the whole set $\Lambda$. In principle, this accumulation could be done ``outside'' $\Lambda$ (that is, with a small or none intersection with $\Lambda$). However, a more interesting property would be that the orbit of each leaf intersect $\Lambda$ in a dense subset. Unlike the case of transitive diffeomorphisms, this requirement do not implies that each leaf itself has a dense intersection with the set. Nevertheless, only a finite number of iterates on each leaf suffices to get the desired dense intersection. Altogether, this considerations leads to the definition of $u$- and $s$-minimality we deal with in this paper (see Section~\ref{minimality}). \smallskip

Fixed an open set $U \subset M$, denote by $\mathrm{RTPHA}_1(U)$ (resp. $\mathrm{GTPHA}_1(U)$) the subset of $\operatorname{Diff}^{1}(M)$ of diffeomorphisms $f$ for which the maximal $f$-invariant subset $\Lambda_f(U)$ of $U$ is a robustly non-hyperbolic, partially hyperbolic set with one-dimensional center bundle that is robustly (resp. generically) transitive. The next two theorems summarizes the main results in this paper.

\begin{bigteo}[generically transitive case] For every open subset $U \subset M$, there is a residual subset of $\mathrm{GTPHA}_1(U)$ 
consisting of diffeomorphisms $g$ for which $\Lambda_g(U)$ is either
generically $s$-minimal or generically $u$-minimal. \end{bigteo}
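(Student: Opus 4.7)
The plan is to combine a Baire category framework with a blender-type dichotomy modeled on the arguments of \cite{B2,B10}, adapted to maximal invariant sets in $U$. First, I would recast generic $s$-minimality and generic $u$-minimality of $\Lambda_g(U)$ as $G_\delta$ conditions on $g \in \mathrm{GTPHA}_1(U)$. Fixing a countable basis $\{V_n\}$ of open sets meeting $U$, generic $s$-minimality is the intersection over $n$ of the property ``a residual set of leaves of $\mathcal{F}^s_g$ inside $\Lambda_g(U)$ has some forward iterate that meets $V_n$''. The upper semicontinuity of $\Lambda_g(U)$, together with the $C^1$-continuity of local strong stable plaques, makes each such property open up to a residual subset, so both generic $s$-minimality and generic $u$-minimality are $G_\delta$ in $\mathrm{GTPHA}_1(U)$.

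The core step is then to show that the union of these two conditions is dense. Working in a residual subset of $\mathrm{GTPHA}_1(U)$ where $\Lambda_g(U)$ is transitive and coincides with a homoclinic class, the one-dimensionality of $E^c$ together with robust non-hyperbolicity forces the class to contain hyperbolic periodic points of both stable indices $\dim E^s$ and $\dim E^s{+}1$. Following Bonatti--D\'{\i}az-type perturbations, and exploiting the central curves of periodic points from Section~\ref{ccpp}, I would activate the heteroclinic relations between these two families of saddles into a $cu$-blender (respectively $cs$-blender) horseshoe contained in $\Lambda_g(U)$, with all perturbations supported in $U$. The output is a dichotomy: every $g$ in a dense subset of $\mathrm{GTPHA}_1(U)$ is $C^1$-approximated by some $g'$ whose attractor $\Lambda_{g'}(U)$ contains a $cu$- or $cs$-blender of the appropriate type in a $C^1$-robust way.

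Once such a blender $\Gamma \subset \Lambda_{g'}(U)$ is in place, its defining robust intersection property forces every strong unstable disk passing through the activating region $W$ of $\Gamma$ to accumulate on the entire homoclinic class $\Lambda_{g'}(U)$. A Crovisier-type argument, combined with generic transitivity of the central curves of periodic points, then shows that the forward orbit of a residual set of strong unstable leaves of $\mathcal{F}^u_{g'}$ eventually enters $W$; hence $g'$ is generically $u$-minimal. The symmetric argument handles the $cs$-blender case and yields generic $s$-minimality. Combined with the $G_\delta$ structure established in the first step, Baire's theorem delivers the desired residual subset.

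The main obstacle I anticipate is carrying out the blender construction \emph{inside} the attractor rather than in the ambient manifold: the perturbations must be supported in $U$ and must preserve the partially hyperbolic structure of the maximal invariant set there, and the resulting accumulation must be measured against $\Lambda_g(U)$ and not $M$. It is precisely the softer notion of generic minimality, requiring density of $\Lambda_g(U)$-visits only for a residual set of leaves and permitting finitely many forward iterates, that makes the adaptation viable; any hope of pointwise density of each individual strong leaf would fail because a generic leaf need not intersect $\Lambda_g(U)$ in more than a single point.
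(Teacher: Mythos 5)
Your proposal takes a genuinely different route from the paper, and it contains a gap that would need to be closed before it could be taken seriously as a proof.

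On the difference of approach: the paper does not construct blenders. Following \cite{B10} rather than \cite{B2}, it builds $f$-invariant central curves $L(p)$ for periodic points of large period (Proposition~\ref{HHU}), isolates the compact invariant segment $\Gamma_p\subset L_U(p)$, and classifies periodic orbits into four classes $P_1,\dots,P_4$ according to whether the endpoints of $\Gamma_p$ are attracting or repelling in the central direction (or whether $\Gamma_p$ is a closed curve). Transitivity forces one of these classes to be dense (Lemma~\ref{l.87}), and each case is handled directly (Proposition~\ref{p.cases}) by intersecting the corresponding central curve with a codimension-one saturated disk $\Delta(x,\varepsilon)$ and tracking where that intersection lands. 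The passage to genericity is then a robustness/semicontinuity argument on leaf accumulation (Lemma~\ref{big}), not a Baire-category argument realizing generic minimality as a $G_\delta$. The central-curve strategy avoids building a blender inside the attractor, which is exactly the technical difficulty you flag at the end of your proposal but do not resolve.

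On the gap: you have weakened the definition. The minimality used in the paper (Definition~\ref{def.minimal}) requires a \emph{uniform} bound $d$ such that, for \emph{every} $x\in\Lambda$, the finite union $\bigcup_{i=1}^{d}\overline{\mathcal{F}^{s}_{\Lambda}(f^{i}(x))}$ equals $\Lambda$; it is not a statement about a residual set of leaves, and ``generically $s$-minimal'' refers to a residual set of \emph{diffeomorphisms}, not of leaves. Your $G_\delta$ framework is built on the residual-leaves reading, so even if every step went through you would be proving a strictly weaker theorem. The uniformity in $d$ and the quantifier over all $x$ are real: the paper obtains them through the compactness argument in the proof of Theorem~\ref{t.criterion2} (a finite subcover giving a single $J$ working for all $y\in\Lambda$). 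In your scheme, the statement ``every strong unstable disk through the blender's activating region accumulates on the whole class'' gives nothing about disks that never enter the activating region; you would still owe a proof that a fixed finite number of forward iterates of \emph{every} leaf reaches that region, which is precisely the content you cannot shortcut by restricting to a residual set. Separately, the claim that the blender perturbation can be carried out with support in $U$ while preserving the partially hyperbolic structure of $\Lambda_g(U)$ is the crux of the localization problem and is asserted rather than argued.
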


\begin{bigteo}[robustly transitive case] For every open subset $U \subset M$, there is an open and dense  subset of $\mathrm{RTPHA}_1(U)$ 
consisting of diffeomorphisms $g$ for which $\Lambda_g(U)$ is either
robustly  $s$-minimal or robustly $u$-minimal. \end{bigteo}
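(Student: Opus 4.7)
The plan is to derive Theorem B from Theorem A by upgrading \emph{generic} $s$- or $u$-minimality to \emph{robust} $s$- or $u$-minimality inside the robustly transitive regime. Openness of the target subset is tautological from the definition, so the content lies in density: given $f\in\mathrm{RTPHA}_1(U)$, I want to produce $g$ arbitrarily $C^1$-close to $f$ that is robustly $s$- or $u$-minimal.

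First I would observe that $\mathrm{RTPHA}_1(U)$ is open inside $\mathrm{GTPHA}_1(U)$, so Theorem A, applied to this open subset, yields a residual $\mathcal R\subset\mathrm{RTPHA}_1(U)$ of diffeomorphisms whose attractor is generically $s$- or $u$-minimal. Pick $g\in\mathcal R$ close to $f$; after relabeling the extremal bundles, assume $\Lambda_g(U)$ is generically $s$-minimal. Using the $C^1$-generic density of periodic orbits in isolated transitive sets (Pugh's closing lemma combined with robust transitivity), one may perturb $g$ slightly, keeping it in $\mathcal R$, so that hyperbolic periodic points of $g$ are dense in $\Lambda_g(U)$. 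Generic $s$-minimality then supplies a hyperbolic periodic point $p\in\Lambda_g(U)$ and a positive integer $N$ for which $\bigcup_{n\le N} g^n(\mathcal F^s(p,g))$ meets $\Lambda_g(U)$ in a dense subset.

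Next, I would show that this ``distinguished leaf'' density is itself robust. The continuation $p_h$ of $p$ and compact pieces of $\mathcal F^s(p_h,h)$ depend $C^1$-continuously on $h$, while robust transitivity of $h\in\mathrm{RTPHA}_1(U)$ forces Hausdorff continuity of the map $h\mapsto\Lambda_h(U)$ on a neighborhood $\mathcal V$ of $g$; a straightforward Hausdorff approximation then transfers the dense intersection to every $h\in\mathcal V$. Finally, using the lamination structure together with the uniform expansion of $\mathcal F^s$ under $h^{-1}$, every leaf $\mathcal F^s(y,h)$ admits iterates arbitrarily $C^1$-close, on compact pieces, to $\mathcal F^s(p_h,h)$; the dense intersection then propagates to every leaf, yielding $s$-minimality of $\Lambda_h(U)$ for all $h\in\mathcal V$, i.e., robust $s$-minimality of $g$.

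The main obstacle I expect is the last transfer step, from the distinguished leaf to every leaf of $\mathcal F^s(h)$. In the closed-manifold setting of Theorem~\ref{t.1} this is handled by a holonomy argument exploiting connectedness of $M$; here the leaves meet $\Lambda_h(U)$ only in a generally Cantor-like subset, so one must combine the lamination's internal holonomy with the Hausdorff continuity of $\Lambda_h(U)$ and verify compatibility with the $N$-iterate definition of minimality in Section~\ref{minimality}, in particular that the integer $N$ can be chosen uniformly over $\mathcal V$. This uniformity is precisely where robust transitivity --- as opposed to mere generic transitivity --- plays its essential role, since without it the attractor could drop abruptly under perturbation and destroy the dense intersection.
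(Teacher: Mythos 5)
Your proposal contains a genuine gap precisely at the step you flag as ``the main obstacle'': the transfer from a distinguished dense leaf to all leaves. Noting that every leaf admits iterates $C^1$-close to a dense leaf does not by itself give density of the orbit of every leaf inside the attractor --- proximity to a dense set is not density. You would need to quantify how the closeness interacts with the lamination and the attractor, and the holonomy argument you gesture at in the global case does not transplant, exactly because leaves meet $\Lambda$ in a Cantor-like set. The paper does not take this route.

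What the paper actually does is reverse the logical direction via the \emph{criterion for minimality} (Theorem~\ref{t.criterion2}): instead of exhibiting one dense leaf and propagating, one shows that the orbit of \emph{every} strong stable leaf accumulates at a single index-$d^s$ periodic point $p$; this local accumulation, combined with the invariant extension of the splitting (Proposition~\ref{ttes}), forces a transverse intersection with $W^u_\varepsilon(\mathcal O(p))$, and the $\lambda$-lemma then pulls the closure of the leaf's orbit onto $W^s(\mathcal O(p))$, whose intersection with $\Lambda$ is dense by the homoclinic structure. The robustness is carried by Lemma~\ref{big}, which is a compactness argument showing that the property ``every leaf meets $W^u_\varepsilon(\mathcal O(p_g))$ transversely'' persists on a \emph{neighborhood} of $f$ when $\operatorname{index}(p)=d^s$; the conclusion $H(p_g,g)\subset\overline{\mathcal O_g^-(D)}$ then converts into $\Lambda_g(U)\subset\overline{\mathcal O_g^-(D)}$ via Corollary~\ref{CCCCC}, which identifies the attractor with the homoclinic class $H(p_g,g)$ on an open and dense subset of $\mathrm{RTPHA}_1(U)$; finally Remark~\ref{otimo} makes the criterion of Theorem~\ref{t.criterion2} applicable openly (not just generically), because for attractors the invariant extension of the splitting persists under perturbation. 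Your sketch omits all three of these ingredients (the criterion, Lemma~\ref{big}, and the identification $\Lambda_g(U)=H(p_g,g)$), and they are exactly what resolves the step you left open; also note that the uniform bound $N$ in the definition of $s$-minimality comes out of the proof of Theorem~\ref{t.criterion2} as the period of $p$, rather than being something one needs to track through a Hausdorff-approximation argument.
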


Note that Theorem \ref{t.1} is a particular case of Theorem B when $U=M$.\medskip

The main difficulty in adapting the global Theorem~\ref{t.1} to our local case is that the partial hyperbolicity is defined only in the attractor. For instance, we can not saturate a strong stable leaf with unstable ones to obtain a co-dimension one topological manifold, since not all the points in the stable leaf belongs to the attractor. A more technical constraint is that a co-dimension one manifold may not divide the set locally into two nonempty components\footnote{This is the reason why oriantability assumptions is not as much helpful in our case as it was for transitive diffeomorphisms in \cite{B2}}.

\section{Preliminaries} \label{pre}

\smallskip

We introduce in this section the basic definitions and terminology that we use throughout this paper. 
\smallskip

Fix  $f \in \operatorname{Diff}^{1}(M)$. Given a subset $X \subset M$, we denote the orbit, the forward orbit, and the backward orbit of $X$ by  $\mathcal{O}_f(X)$, $\mathcal{O}^+_f(X)$, and $\mathcal{O}^-_f(X)$, respectively. For every open subset $U$ of $M$, we define the maximal $f$-~invariant set of $f$ in $U$ by 
$$
\Lambda_f(U):=\bigcap_{n \in \mathbb{Z}} f^n(U).
$$ 

With respect to $f$, a compact invariant set $\Lambda \subset M$ is said:
 
\medskip

\begin{itemize}

\item \emph{Isolated} or \emph{locally maximal}: If there is an open neighborhood $U$ of $\Lambda$ such that $\Lambda = \Lambda_f(U)$. Equivalently, $\Lambda$ is the maximal invariant subset of $f$ in $U$. Any  open neighborhood $U$ of $\Lambda$ satisfying $\Lambda = \Lambda_f(U)$ is called an \emph{isolating block} of $\Lambda$.

\smallskip

\item An \emph{attractor}:  If there is an open neighborhood $U$ of $\Lambda$ such that $f(\overline{U}) \subset U$ and $\Lambda = \bigcap_{n \in \mathbb{N}} f^n(U)$. We call $\Lambda$ a \emph{proper} attractor if $U \not= M$ , and thus $\Lambda \not= M$. 

\smallskip

\item \emph{Transitive}: If there is $ x \in \Lambda$ such that its forward orbit $\mathcal{O}^+_f(x)$ is dense in $\Lambda$. In our setting, this is equivalent to the following property: Given any pair $V_1,V_2$ of (relative) nonempty open sets of $\Lambda$, there is $n \in \mathbb{Z}$ such that $f^n(V_1) \cap V_2 \not= \emptyset$.

\item \emph{Robustly transitive set (resp. attractor)}: If there are an isolating block $U$ of $\Lambda$ and a neighborhood $\mathcal{U}$ of $f$ such that, for every $g \in \mathcal{U}$, the set $\Lambda_g(U)$ is a compact transitive set (resp. attractor) with respect to $g$.

\smallskip

\item \emph{Generically transitive set (resp. attractor)}: If there are an isolating block $U$ of $\Lambda$, a neighborhood $\mathcal{U}$ of $f$, and a residual subset $\mathcal{R} \subset \mathcal{U}$ such that, for every $g \in \mathcal{R}$, the set $\Lambda_g(U)$ is a compact transitive set (resp. attractor) with respect to $g$. 
\end{itemize}

\smallskip

\begin{re} \label{cont.isolated} \em{Isolated sets vary, \emph{a priori}, just upper semicontinuously. By an abuse of terminology, we call the set $\Lambda_g(U)$ the \emph{continuation} of the set $\Lambda_f(U)$ when $g$ varies in a small neighborhood of~$f$. } 
\end{re} 
 
\begin{re} \label{r.continuation} 
 \em{An attractor $\Lambda$ of a diffeomorphism $f$ is an isolated set, so we also denote it by $\Lambda_f(U)$ for some isolating block $U$ of $\Lambda$. Observe that if $g$ is close enough to $f$, then the  continuation $\Lambda_g(U)$ of $\Lambda_f(U)$ is also an attractor for $g$. Clearly, if $\Lambda_f(U)$ is a proper set, so is the continuation $\Lambda_g(U)$}.
\end{re}

\begin{re} {\em{In the definition of attractors, some authors requires the additional property that $\Lambda_f(U)$ is transitive. We do not follow this convention. The reason is that we want to talk about the continuation of the attractor as in Remark~\ref{r.continuation}, and transitivity is not a robust property in general.}} 
\end{re}

Let $\Lambda$ be a partially hyperbolic set of a $C^1$ diffeomorphism $f$. Given a hyperbolic periodic point $p \in \Lambda$, we denote the period of $p$ by $\pi(p)$. If $g \in \operatorname{Diff}^{1}(M)$ is sufficiently close  to $f$, then there is a hyperbolic continuation of $p$ for $g$ that we denote by $p_g$, which satisfies $\pi(p_g) = \pi(p)$. \\

Given $r>0$, we denote by $\mathcal{F}^{s}_r (x,f)$ the open ball of radius $r$ centered at $x$, relative to the induced distance on the leaf $\mathcal{F}^{s}(x,f)$. \smallskip 

\begin{re} \label{r.continuity} \em{The leaves of $\mathcal{F}^s(g)$ and $\mathcal{F}^u(g)$ depend continuously on the diffeomorphism $g$. This means that, fixed $r>0$ and $\varepsilon > 0$, if $g$ is sufficiently close to $f$ and $x \in \Lambda_f(U)$ is sufficiently close to $y \in \Lambda_g(U)$, then the disk $\mathcal{F}^s_r(y,g)$ is $\varepsilon$-close to the disk  $\mathcal{F}^s_r(x,f)$ with respect to the $C^1$-topology. }
\end{re}

 The local and global stable manifolds of $p$ are denoted by $W^s_r(p,f)$ and $W^s(p,f)$, respectively. Also, $W^s_{r}(\mathcal{O}_f(p),f)$ and $W^s(\mathcal{O}_f(p),f)$ stand for the global and local stable manifolds of the orbit of $p$. The dimension of $W^s(p,f)$ as a submanifold of $M$ is called the \emph{index} of $p$ and is denoted by $\operatorname{index}(p)$. If $g$ is close to $f$, then $\operatorname{index}(p_g) = \operatorname{index}(p)$.

 Similar notations are considered to the unstable manifold and lamination. As before, when there is no risk of misunderstanding, we omit $f$ in these notations.

\begin{re}\em{We denote the dimensions of the bundles $E^s$, $E^c$ and $E^u$ of the partially hyperbolic splitting of $\Lambda$ by $d^s$, $d^c$, and $d^u$, respectively.
Clearly, $d^s \leq \operatorname{index}(p) \leq d^s + d^c$. In particular, when $d^c=1$, there are only two possibilities for $\operatorname{index}(p)$, which are $d^s$ or $d^s +1$.}
\end{re}

The set of all periodic points in $\Lambda$ with index $d^s$ and $d^s + 1$ are denoted by $\operatorname{Per}_{d^s}(f_{|_{\Lambda}})$ and $\operatorname{Per}_{d^s+1}(f_{|_{\Lambda}})$, respectively.  \smallskip

When we deal with perturbations of a diffeomorphism $f$, we usually want that the new diffeomorphism is so close to $f$ so that it inherit the robust properties of $f$. Hence, we introduce the following definition.   

\begin{de} \label{d.compatible}
\em{Let $\Lambda$ be an isolated set of a diffeomorphism $f$ and $U \subset M$ be an isolated block of $\Lambda$.  We say that a neighborhood $\mathcal{U}$ of $f$ is \emph{compatible} (with respect to $U$) if it is sufficiently small so that, for all $g \in \mathcal{U}$, we have:

\begin{itemize}

\item the set $\Lambda_g(U)$ is an isolated set; 

\item if $\Lambda_f(U)$ is an attractor of $f$, then $\Lambda_g(U)$ is an attractor of $g$;

\item if $\Lambda_f(U)$ is a partially hyperbolic set, then $\Lambda_g(U)$ is a partially hyperbolic set of $g$ with the same bundle dimensions $d^s$, $d^c$, and $d^u$; 

\item if $\Lambda_f(U)$ is a generically (resp. robustly) transitive set of $f$, then $\Lambda_g(U)$ is a generically (resp. robustly) transitive set of $g$.

\end{itemize} }

\end{de}

\section{$C^1$-Generic dynamics}
 \label{genericsection}
 
\medskip
 
 In this section we gather some generic properties of diffeomorphisms in $\operatorname{Diff}^{1}(M)$. 
We say that a property $\mathbf{P}$ is \emph{$C^1$-generic}  if $\mathbf{P}$ holds for every diffeomorphism in a residual ($G_{\delta}$ and dense) subset $\mathcal{R}$ of $\operatorname{Diff}^{1}(M)$. 

\smallskip

\subsection{$C^1$-Generic homoclinic classes}

\begin{de}[Homoclinic class]
\label{d.homoclinic}
{\em{
Let $p $ be a hyperbolic periodic point of a diffeomorphism $f$. 
A \emph{homoclinic point} $x$ of $p$ is a point whose forward and backward iterates converge to the orbit $\mathcal{O}_f(p)$ of $p$ 
(i.e., $x \in W^s(\mathcal{O}_f(p)) \cap W^u(\mathcal{O}_f(p))$). 
If the stable and unstable manifolds of the orbit of $p$ meet transversely at $x$, we say that $x$ is a \emph{transverse homoclinic point}. Otherwise, we say that $x$ is a \emph{homoclinic tangency}.

\medskip

The homoclinic class of $p$, denoted by
$H(p,f)$,
is the closure of the set of all transverse homoclinic points of $p$. That is, 
$$H(p,f)= \overline{W^s(\mathcal{O}_f(p)) \pitchfork W^u(\mathcal{O}_f(p))}.$$}}
\end{de}

\begin{re} \label{r.HC.properties}
 {\em{Homoclinic classes are transitive sets and contain a dense subset of periodic points. By the persistence of the transverse intersections between the invariant manifolds of $p$, we find that homoclinic classes vary lower semicontinuously. See  Chapter 10.4 of \cite{B3} for a detailed discussion about homoclinic classes. }}
\end{re}


\begin{teo}[\cite{B8,B13}] \label{t.generic}
There is a residual subset $\mathcal{R}_0$ of $\operatorname{Diff}^{1}(M)$ such that, for every $f \in \mathcal{R}_0$, the following holds:

\begin{enumerate}

\item 
The diffeomorphism $f$ is Kupka-Smale: every periodic point of $f$ is hyperbolic and their invariant manifolds met transversely.

\item The set $\operatorname{Per}(f)$ of periodic points of $f$ is  dense in the nonwandering set $\Omega(f)$ of $f$. In particular, any isolated transitive compact set has a dense subset of periodic points. 



\item
Any transitive set intersecting a homoclinic
class is contained in it. In particular, 
any pair of homoclinic classes are either disjoint or coincide.

\end{enumerate}
\end{teo}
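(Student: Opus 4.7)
The plan is to build $\mathcal{R}_0$ as the intersection of three residual subsets, one per item, each produced by a classical $C^1$ generic tool: transversality for (1), Pugh's $C^1$ closing lemma for (2), and Hayashi's connecting lemma together with the Bonatti--Crovisier identification of homoclinic classes with chain-recurrence classes for (3).

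For item (1), I would follow the standard Kupka--Smale construction. For each $n \in \mathbb{N}$, the set of diffeomorphisms whose periodic orbits of period at most $n$ are hyperbolic is open (implicit function theorem applied to $f^n - \operatorname{id}$) and dense (localized perturbations of the derivative at any non-hyperbolic periodic orbit). Transversality of the stable and unstable manifolds of any pair of such orbits, truncated to compact pieces, is then open and dense by a jet-transversality perturbation supported near a chosen point of intersection. Intersecting over $n$ and over compact truncations yields the residual subset $\mathcal{R}_{\mathrm{KS}}$.

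For item (2), fix a countable basis $\{V_k\}$ of $M$ and set
$$
\mathcal{G}_k \;=\; \{ f \in \operatorname{Diff}^{1}(M) : \operatorname{Per}(f) \cap V_k \neq \emptyset \} \;\cup\; \{ f : \Omega(f) \cap V_k = \emptyset \}.
$$
The first subset is open in $\operatorname{Diff}^{1}(M)$, and Pugh's $C^1$ closing lemma guarantees that whenever $\Omega(f) \cap V_k \neq \emptyset$ there is an arbitrarily $C^1$-small perturbation of $f$ having a periodic point in $V_k$; combined with upper semicontinuity of $f \mapsto \Omega(f)$ on a $G_\delta$, each $\mathcal{G}_k$ contains a dense $G_\delta$, and the countable intersection gives the residual subset $\mathcal{R}_\Omega$. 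The addendum for an isolated transitive compact set follows at once, since any point of dense forward orbit is nonwandering.

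For item (3), the essential input is that, $C^1$-generically, the homoclinic class of every hyperbolic periodic point coincides with its chain-recurrence class. Given a transitive set $K$ meeting $H(p,f)$, item (2) provides a periodic point $q \in K$ close to a point of $H(p,f)$, and the connecting lemma yields a $C^1$-small perturbation creating a transverse heteroclinic cycle between $p$ and $q$; the resulting homoclinic relation is robust and, at the generic parameter, forces $H(q,f) = H(p,f)$, whence $K \subset H(p,f)$. The same equivalence relation implies that two homoclinic classes are either disjoint or equal. The main obstacle is precisely this last step: the connecting-lemma perturbations must be performed uniformly over a countable dense family of candidate periodic orbits, and one must guarantee that transverse intersections created at earlier stages of the Baire construction are preserved, which is the delicate framework developed in \cite{B8,B13}.
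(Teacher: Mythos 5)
The paper does not prove this theorem; it simply attributes items (1)--(2) to Pugh \cite{B13} and item (3) to Carballo--Morales--Pacifico \cite{B8}, so there is nothing in the paper to compare against beyond the citation. Evaluated on its own, your sketch of item (1) is the standard Kupka--Smale argument and is fine. Your sketch of item (2) is also essentially correct, with two small caveats: the set $\{f : \operatorname{Per}(f)\cap V_k \neq \emptyset\}$ is open only after one restricts to hyperbolic periodic orbits (so one should first intersect with the Kupka--Smale residual), and the addendum about isolated transitive sets is not quite immediate, since Pugh's theorem gives periodic orbits near $\Lambda$ rather than inside $\Lambda$; one needs the closing lemma to shadow an orbit that already stays in the isolating block.

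Your item (3), however, has a genuine gap. You write that ``item (2) provides a periodic point $q \in K$,'' but item (2) only gives density of $\operatorname{Per}(f)$ in $\Omega(f)$, not in an arbitrary transitive set $K$: the periodic orbits approximating points of $K$ need not lie in $K$, and a non-isolated transitive set may contain no periodic orbit at all. Moreover, even if $q \in K$ existed, the step ``forces $H(q,f)=H(p,f)$, whence $K \subset H(p,f)$'' does not close: knowing $H(q,f)=H(p,f)$ says nothing about the points of $K$ other than $q$, and $K \subset H(q,f)$ is precisely an instance of item (3) itself, so it cannot be assumed. You also switch sources mid-argument: the reduction of $H(p,f)$ to its chain recurrence class is Bonatti--Crovisier (\cite{B17} in this paper's bibliography), not \cite{B8}, and the original \cite{B8} proof does not go through chain recurrence classes at all (it uses the generic identity $H(p,f)=\overline{W^s(\mathcal{O}(p))}\cap\overline{W^u(\mathcal{O}(p))}$ plus the connecting lemma). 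If you do want to use the Bonatti--Crovisier identification, the clean route avoids periodic points in $K$ entirely: a transitive set is chain transitive, hence lies in a single chain recurrence class of $M$; if it meets $H(p,f)$, which generically is itself a chain recurrence class, it must lie in that class, giving $K \subset H(p,f)$ directly. That repair would make item (3) correct.
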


Items (1) and (2) are the main theorem in \cite{B13}. Items (3) is item 3 of Theorem A in \cite{B8}. 
\smallskip

\begin{re} \label{5gta} \em{By item (3), a generic transitive attractor coincide with the homoclinic classes of its periodic points.}  \end{re}

\subsection{$C^1$-Generic transitive sets and attractors} \
\medskip
 
In this subsection we gather some useful properties of transitive sets and attractors in the $C^1$-generic setting.

\begin{pr}
\label{Ab} 
There is a residual subset $\mathcal{R}_1$ of $\operatorname{Diff}^{1}(M)$ such that, if 
$f \in \mathcal{R}_1$ and $\Lambda_f(U)$ 
is an isolated subset of $M$, then the following  hold:
\begin{enumerate}
\item
If $\Lambda_f(U)$ is a transitive attractor, then there is a neighborhood $\mathcal{U}$ of $f$ such that, 
for every $g \in \mathcal{R}_1 \cap \mathcal{U}$, the set $\Lambda_g(U)$ 
is a transitive attractor. 
In other words, the set $\Lambda_f(U)$ is a generically transitive attractor. 

\item
If $\Lambda_f(U)$ is non-hyperbolic, then it 
contains a pair of (hyperbolic) saddles of different indices.
\end{enumerate}
\end{pr}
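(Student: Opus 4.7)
The plan is to set $\mathcal{R}_1 := \mathcal{R}_0 \cap \mathcal{G}_a \cap \mathcal{G}_b$, where $\mathcal{R}_0$ is the residual set from Theorem \ref{t.generic} and $\mathcal{G}_a, \mathcal{G}_b$ are standard residual subsets of $\operatorname{Diff}^1(M)$ from $C^1$-generic dynamics: (a) on $\mathcal{G}_a$, every Lyapunov-stable chain-recurrent class containing a hyperbolic periodic point coincides with the homoclinic class of that point (Bonatti--Crovisier); (b) on $\mathcal{G}_b$, every isolated invariant set all of whose hyperbolic periodic points share a common index is uniformly hyperbolic (a Pujals--Sambarino-type dichotomy, extended to higher dimensions via dominated-splitting arguments). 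To handle the uncountable family of possible isolating blocks, these properties are enforced along a fixed countable basis $\{U_k\}$ of open sets of $M$; any isolated set $\Lambda_f(U)$ can be rewritten as $\Lambda_f(U_k)$ for some basis element $U_k$, and this identification persists under small perturbations by upper semicontinuity of $\Lambda_g(\cdot)$, so the countable restriction is enough.

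For item (1), fix $f \in \mathcal{R}_1$ with $\Lambda_f(U)$ a transitive attractor. By Theorem \ref{t.generic}(2), $\Lambda_f(U)$ contains a hyperbolic periodic point $p$; by item (3) together with transitivity, $\Lambda_f(U) \subseteq H(p,f)$. The attractor property forces $W^u(\mathcal{O}_f(p)) \subseteq \Lambda_f(U)$, so closing up the transverse homoclinic intersections gives $H(p,f) \subseteq \Lambda_f(U)$, and hence $\Lambda_f(U) = H(p,f)$. Pick a compatible neighborhood $\mathcal{U}$ of $f$. For every $g \in \mathcal{R}_1 \cap \mathcal{U}$, the continuation $\Lambda_g(U)$ is again an attractor which contains $H(p_g,g)$ by the same attractor argument. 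Property (a), applied to the chain-recurrent class of $p_g$ inside the Lyapunov-stable set $\Lambda_g(U)$, identifies this class with $H(p_g,g)$; one then shows, by combining upper semicontinuity of the Conley decomposition at the continuity point $f$ with the chain-transitivity of $\Lambda_f(U)$, that $\Lambda_g(U)$ cannot split into several chain-recurrent classes once $\mathcal{U}$ is small enough. Hence $\Lambda_g(U) = H(p_g,g)$, which is transitive by Remark \ref{r.HC.properties}.

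For item (2), if $\Lambda_f(U)$ were non-hyperbolic while all its hyperbolic saddles had a single common index, then by Theorem \ref{t.generic}(2) the periodic points of that index would be dense in $\Lambda_f(U)$. Property (b) would then force $\Lambda_f(U)$ to be uniformly hyperbolic, contradicting the hypothesis. Therefore $\Lambda_f(U)$ must contain hyperbolic saddles of at least two distinct indices.

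The principal technical obstacle lies in the last step of (1): upgrading the inclusion $H(p_g,g) \subseteq \Lambda_g(U)$ to equality. Upper semicontinuity of $\Lambda_g(U)$ and lower semicontinuity of $H(p_g,g)$ alone only show that the two sets are close in Hausdorff distance as $g \to f$; excluding the appearance of extra chain-recurrent pieces inside $U$ under perturbation genuinely requires the Bonatti--Crovisier dichotomy applied to $\Lambda_g(U)$ (together with the transitivity of $\Lambda_f(U)$), rather than only the Kupka--Smale-level genericity of Theorem \ref{t.generic}. Part (2), in contrast, is a clean application of the homogeneous-index/hyperbolicity dichotomy absorbed into property (b).
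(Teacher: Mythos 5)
The paper itself does not re-prove this proposition; it simply cites two known results. Item (1) is Theorem~B of Abdenur \cite{B4} (``Attractors of Generic Diffeomorphisms are Persistent''), and item (2) is attributed to Ma\~n\'e's work on the Ergodic Closing Lemma \cite{B16}. Your proposal instead attempts a self-contained reconstruction, and while the overall strategy (build $\mathcal{R}_1$ from a countable intersection along a countable basis of isolating blocks, identify the attractor with a homoclinic class, then use semicontinuity plus a dichotomy) is the right general shape, there are two concrete problems.

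For item (2), the residual property you call $\mathcal{G}_b$ is not a standard $C^1$-generic fact as stated, and the attribution is off. Pujals--Sambarino is a surface result (about closures of periodic sets admitting a dominated splitting becoming hyperbolic after $C^1$-perturbation) and it does \emph{not} extend to higher dimensions ``via dominated-splitting arguments'' in any straightforward way; counterexamples to naive generalizations are well known. The tool actually invoked here, as the paper records, is Ma\~n\'e's argument from \cite{B16}: roughly, if an isolated (transitive) set robustly carries periodic points of a single index, then an index-selection argument based on the Ergodic Closing Lemma forces uniform hyperbolicity; the $C^1$-generic version follows because for a generic $f$ this index homogeneity would persist on a neighborhood. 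Your proof step ``Property (b) would then force $\Lambda_f(U)$ to be uniformly hyperbolic'' therefore rests on an unestablished property; replacing it by Ma\~n\'e's index argument closes the gap but is a different mechanism than the dominated-splitting one you describe. Also note that your invocation of Theorem~\ref{t.generic}(2) to get dense periodic points tacitly assumes $\Lambda_f(U)$ is transitive, which is not in the hypotheses of item (2) as written.

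For item (1), you correctly identify that the hard step is upgrading $H(p_g,g)\subseteq\Lambda_g(U)$ to equality, and you honestly flag it as the ``principal technical obstacle,'' but you then leave it at the level of ``one then shows \ldots\ that $\Lambda_g(U)$ cannot split into several chain-recurrent classes once $\mathcal{U}$ is small enough.'' This is precisely the content of Abdenur's Theorem~B, and it is not a formal consequence of upper semicontinuity of the Conley decomposition plus chain-transitivity of $\Lambda_f(U)$: an attractor $\Lambda_g(U)$ need not be chain-transitive (it can contain a saddle together with a sink fed by its unstable manifold), and upper semicontinuity of the chain-recurrent set alone does not exclude the sudden appearance of a new chain class inside $U$ under perturbation. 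The genuine argument uses that for a residual subset the map $g\mapsto C(p_g,g)$ is \emph{continuous} (Bonatti--Crovisier), combined with $\Lambda_f(U)=C(p,f)$ being isolated in $R(f)$, so that $R(g)\cap U$ is sandwiched between $C(p_g,g)$ and a small neighborhood of $C(p,f)$. As it stands, your sketch asserts the conclusion of the cited theorem rather than proving it, so this step is a real gap rather than a detail to be filled in.
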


Item (1) is Theorem B of \cite{B4}. Item (2) is due to Man\~e in the proof of the Ergodic Closing Lemma \cite{B16}. \\

Next proposition is a translation of some results in \cite{B2} about generic transitive diffeomorphisms to the context of generic transitive attractors. The same arguments in \cite{B2} can be applied to our context without substantial amendments, so here we only sketch the proof.

\begin{pr}\label{Bld}  
There is a residual subset $\mathcal{R}_2$ of 
$\operatorname{Diff}^{1}(M)$, with $\mathcal{R}_2 \subset \mathcal{R}_1$, consisting of diffeomorphisms
$f$ satisfying the following property. Let $f \in \mathcal{R}_2$ and  $\Lambda_f(U)$ be 
a transitive isolated set of $f$ that is  
partially hyperbolic with one-dimensional center bundle.
Then, for every pair of hyperbolic periodic points 
$p,q \in \Lambda_f(U)$ with indices $d^s$ and $d^s+1$ 
respectively,
there is an open set $\mathcal{V}_{p,q} \subset \operatorname{Diff}^{1}(M)$, 
with $f \in \overline{\mathcal{V}_{p,q}}$, 
such that, for all $g \in \mathcal{V}_{p,q}$ it hold:
\begin{enumerate}[1)]
\item $W^s(\mathcal{O}_g(q_g)) \subset \overline{W^s(\mathcal{O}_g(p_g))}$
and
$W^u(\mathcal{O}_g(p_g)) \subset \overline{W^u(\mathcal{O}_g(q_g))}$.

\item
if $\Lambda_g(U)$ is transitive, then
 $\Lambda_g(U) \subset \overline{W^s(\mathcal{O}_g(p_g))} 
\cap \overline{W^u(\mathcal{O}_g(q_g))}$.

\item if $\Lambda_f(U)$ is robustly transitive, then $\Lambda_g(U) \subset H(p_g,g)$.

\end{enumerate}
\end{pr}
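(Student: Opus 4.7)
The strategy is to use the $C^1$-connecting lemma of Hayashi to produce a transverse heteroclinic intersection between $p$ and $q$, and then to extract the three conclusions via the Palis $\lambda$-lemma combined with the attractor property. Since $p$ and $q$ lie in the transitive isolated set $\Lambda_f(U)$, there are orbit segments of $f$ starting arbitrarily close to $p$ and ending arbitrarily close to $q$. The connecting lemma then produces an arbitrarily $C^1$-small perturbation $g$ of $f$ (localized away from $\mathcal{O}(p) \cup \mathcal{O}(q)$, so that $p_g = p$ and $q_g = q$) for which $W^u(p_g) \cap W^s(q_g) \neq \emptyset$. Using $d^c = 1$, the dimension count
$$\dim W^u(p_g) + \dim W^s(q_g) = (d^c + d^u) + (d^s + d^c) = \dim M + 1$$
allows one further small perturbation to render the intersection transverse. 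Transverse heteroclinic intersections are $C^1$-robust, so we obtain an open set $\mathcal{V}_{p,q}$ of diffeomorphisms carrying such an intersection; shrinking $\mathcal{V}_{p,q}$ if necessary so that it is a compatible neighborhood in the sense of Definition~\ref{d.compatible}, we have $f \in \overline{\mathcal{V}_{p,q}}$ by construction.

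Item (1) then falls out of the $\lambda$-lemma applied at the transverse heteroclinic intersection: backward iteration of a small disk inside $W^s(q_g)$ transverse to $W^u(p_g)$ accumulates in the $C^1$-topology on $W^s(p_g)$, whence $W^s(q_g) \subset \overline{W^s(p_g)}$; the symmetric forward argument yields $W^u(p_g) \subset \overline{W^u(q_g)}$, and passage to the full $g$-orbits preserves both inclusions. For item (2), assume $\Lambda_g(U)$ is transitive. Since $\Lambda_g(U)$ is an attractor, $W^u(\mathcal{O}_g(q_g)) \subset \Lambda_g(U)$, and its closure is a closed $g$-invariant subset of $\Lambda_g(U)$ with non-empty interior relative to $\Lambda_g(U)$ (by the $\lambda$-lemma at the transverse heteroclinic); transitivity forces $\overline{W^u(\mathcal{O}_g(q_g))} = \Lambda_g(U)$. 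The symmetric inclusion $\Lambda_g(U) \subset \overline{W^s(\mathcal{O}_g(p_g))}$ follows from item (1), the density of $W^s(\mathcal{O}_g(q_g)) \cap \Lambda_g(U)$ (a consequence of transitivity and the local product structure near $q_g$), and the inclusion $W^s(q_g) \subset \overline{W^s(p_g)}$ already established. For item (3), robust transitivity of $\Lambda_f(U)$ guarantees that $\Lambda_g(U)$ is transitive for every $g$ in $\mathcal{V}_{p,q}$, so item (2) applies; moreover the transverse heteroclinic point, lying in $W^u(p_g) \cap W^s(q_g) \subset W^u(p_g) \cap \overline{W^s(p_g)}$, is a $C^1$-accumulation point of transverse homoclinic intersections of $p_g$, and iterating along the orbit produces $\Lambda_g(U) \subset H(p_g, g)$.

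The main obstacle is that, unlike the global setting of Theorem~\ref{t.1}, the partial hyperbolicity here lives only on $\Lambda_g(U)$, so we cannot saturate leaves to obtain global codimension-one submanifolds dividing $M$. In particular, the identification $\Lambda_g(U) = \overline{W^u(\mathcal{O}_g(q_g))}$ must be extracted purely from the attractor property together with transitivity, rather than from generic identifications with a homoclinic class. The attractor property is what saves the argument: it ensures the unstable manifolds of periodic points in $\Lambda_g(U)$ are automatically contained in the attractor, turning the $C^1$-density statements given by the $\lambda$-lemma into genuine density statements inside $\Lambda_g(U)$.
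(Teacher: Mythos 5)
Your argument for item~(1) has a genuine gap: the $\lambda$-lemma applied to a single transverse intersection $W^u(p_g) \pitchfork W^s(q_g)$ produces the \emph{opposite} inclusions from the ones stated. Take a disk $D \subset W^s(q_g)$ through the intersection point, transverse to $W^u(p_g)$ and of dimension $d^s$; backward iteration of $D$ accumulates on $W^s(p_g)$, which gives $W^s(p_g) \subset \overline{W^s(q_g)}$. Symmetrically, forward iteration of a $d^u$-dimensional disk inside $W^u(p_g)$ transverse to $W^s(q_g)$ yields $W^u(q_g) \subset \overline{W^u(p_g)}$. Both of these are the easy inclusions of a small-dimensional manifold into the closure of a large-dimensional one. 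Item~(1) asks for the hard, reverse inclusions $W^s(\mathcal{O}_g(q_g)) \subset \overline{W^s(\mathcal{O}_g(p_g))}$ and $W^u(\mathcal{O}_g(p_g)) \subset \overline{W^u(\mathcal{O}_g(q_g))}$, where the bigger manifold is accumulated by the smaller one. That cannot follow from a one-sided transverse heteroclinic plus the $\lambda$-lemma; a dimension count already shows that nothing persistent could emerge from the $\lambda$-lemma in those directions.

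What the paper does instead is invoke the heterodimensional-cycle machinery. By item~(2) of Proposition~\ref{Ab} the set carries saddles of both indices; Proposition~1.1 of \cite{B14} lets one perturb to create a full heterodimensional cycle between $p_g$ and $q_g$ (both the transverse intersection $W^u(p_g) \pitchfork W^s(q_g)$ and the quasi-transverse one $W^s(p_g) \cap W^u(q_g)$); and then Proposition~2.6 of \cite{B2}, whose proof rests on unfolding such a cycle into a blender/robust cycle, provides exactly the open set $\mathcal{V}_{p,q}$ on which the hard inclusions of item~(1) hold. The absence of tangencies (guaranteed by $d^c = 1$) is needed to apply that machinery. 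Your proposal produces only one of the two intersections, does not touch the blender mechanism, and consequently cannot recover item~(1).

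Two smaller issues: you treat $\Lambda_g(U)$ as an attractor in the proof of item~(2), using $W^u(\mathcal{O}_g(q_g)) \subset \Lambda_g(U)$, but Proposition~\ref{Bld} is stated for a transitive \emph{isolated} set, not an attractor, so that containment is not available. Also, the non-empty relative interior of $\overline{W^u(\mathcal{O}_g(q_g))}$ in $\Lambda_g(U)$ is asserted but not justified; for a fractal transitive set the closure of a $d^u$-dimensional leaf need not have interior. The paper's route for item~(2) (Propositions~2.1 and~2.3 of \cite{B2}) instead shows directly that the large-dimensional manifolds $W^s(\mathcal{O}_g(p_g))$ and $W^u(\mathcal{O}_g(q_g))$ accumulate on every point of the set, and then uses item~(1) to transfer this to the small-dimensional ones.
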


\begin{proof}[sketch of the proof] By item (2) of Proposition \ref{Ab}, the set $\Lambda_f(U)$ is generically transitive and contains saddles $p$ and $q$ of indices $d^s$ and $d^s+1$, respectively. Observe that for every $g$ close to $f$ one has $p_g,q_g\in \Lambda_g(U)$. By Proposition 1.1 of \cite{B14}, we can assume that there is an \emph{heterodimensional cycle}\footnote{See \cite{B7} for the definition and a general overview on this topic.} associated to $p_g$ and $q_g$ for every $g$ in a dense subset of a neighbourhood of $f$. Since $E^c$  is one-dimensional, there is no homoclinic tangencies associated to the periodic points of any $g$ in a neighbourhood of $f$. These shows that we are in the hypotheses of Proposition 2.6 in \cite{B2}, proving item 1). The proof of item 2) goes the same as the proof of Propositions 2.1 and 2.3 of \cite{B2}. The idea is that the invariant manifolds with bigger dimensions ($W^s(\mathcal{O}_g(p_g))$ and $W^u(\mathcal{O}_g(q_g))$) accumulate at every point in $\Lambda_f(U)$. Using item 1), we see that the same hold for the manifolds with lower dimensions. Item 3) is the equivalent of Corollary 2.5 in \cite{B2} for proper attractors.\end{proof}

\begin{cor} \label{CCCCC} There is an open and dense subset $\mathcal{B}$ of $\mathrm{RTPHA}_1(U)$ such that, for every $f \in \mathcal{B}$, the attractor  $\Lambda_f(U)$ is a homoclinic class. Consequently, the attractors $\Lambda_f(U)$ depend continuously on $f \in \mathcal{B}$. 
\end{cor}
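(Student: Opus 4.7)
The plan is to use the residual setting of Proposition \ref{Bld} to find, near each generic $f$, an open set of diffeomorphisms $g$ whose attractor $\Lambda_g(U)$ is contained in, hence equal to, the homoclinic class of a distinguished saddle; Hausdorff continuity then follows because the isolated set $\Lambda_g(U)$ varies upper semicontinuously while the homoclinic class varies lower semicontinuously. I would start by fixing $f$ in the residual (hence dense) subset $\mathcal{R}_2 \cap \mathrm{RTPHA}_1(U)$ of $\mathrm{RTPHA}_1(U)$. Since $\Lambda_f(U)$ is robustly non-hyperbolic and partially hyperbolic with one-dimensional center bundle, item (2) of Proposition \ref{Ab} yields hyperbolic periodic points $p,q \in \Lambda_f(U)$ of indices $d^s$ and $d^s+1$. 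Applying item 3) of Proposition \ref{Bld} produces an open set $\mathcal{V}_{p,q}$, with $f \in \overline{\mathcal{V}_{p,q}}$, such that $\Lambda_g(U) \subset H(p_g,g)$ for every $g \in \mathcal{V}_{p,q}$. Shrinking $\mathcal{V}_{p,q}$ inside a compatible neighborhood of $f$, I may assume $\mathcal{V}_{p,q} \subset \mathrm{RTPHA}_1(U)$, so each $\Lambda_g(U)$ remains an attractor containing the continuation $p_g$.

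Next I would upgrade the inclusion to an equality using the attractor hypothesis. Because $g(\overline{U}) \subset U$ and the backward orbit of any point of $W^u(\mathcal{O}_g(p_g))$ accumulates on $\mathcal{O}_g(p_g) \subset U$, the full $g$-orbit of any such point eventually lies in $U$ and is therefore contained in $\Lambda_g(U)$; hence $W^u(\mathcal{O}_g(p_g)) \subset \Lambda_g(U)$, and all transverse homoclinic points of $p_g$ belong to $\Lambda_g(U)$. This gives $H(p_g,g) \subset \Lambda_g(U)$ and so $\Lambda_g(U) = H(p_g,g)$ for every $g \in \mathcal{V}_{p,q}$. Defining $\mathcal{B}$ as the union of all such $\mathcal{V}_{p,q}$ as $f$ varies in $\mathcal{R}_2 \cap \mathrm{RTPHA}_1(U)$ produces an open subset of $\mathrm{RTPHA}_1(U)$ on which the attractor coincides with a homoclinic class; density of $\mathcal{B}$ is immediate, since any $h \in \mathrm{RTPHA}_1(U)$ is approximated by some $f \in \mathcal{R}_2 \cap \mathrm{RTPHA}_1(U)$, which in turn lies in the closure of the corresponding $\mathcal{V}_{p,q} \subset \mathcal{B}$.

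For the continuity statement, Remark \ref{cont.isolated} tells me that $g \mapsto \Lambda_g(U)$ varies upper semicontinuously in the Hausdorff metric, while by Remark \ref{r.HC.properties} the map $g \mapsto H(p_g,g)$ varies lower semicontinuously. On each $\mathcal{V}_{p,q}$ the two set-valued maps coincide with $\Lambda_g(U)$, so the common value is Hausdorff continuous there; as the $\mathcal{V}_{p,q}$ cover $\mathcal{B}$, the map $g \mapsto \Lambda_g(U)$ is continuous on all of $\mathcal{B}$. I expect the principal subtlety to be the verification of the reverse inclusion $H(p_g,g) \subset \Lambda_g(U)$: Proposition \ref{Bld} only delivers one direction, and closing the loop depends essentially on the attractor hypothesis $g(\overline{U}) \subset U$, without which the transverse homoclinic points of $p_g$ could a priori escape the maximal invariant set in $U$ and the final equality would fail.
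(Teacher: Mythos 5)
Your proposal is correct and follows essentially the same route as the paper's proof: apply item 3) of Proposition~\ref{Bld} near a dense set of diffeomorphisms to get $\Lambda_g(U)\subset H(p_g,g)$ on open sets whose union is dense, reverse the inclusion using the fact that an attractor contains the unstable manifolds (hence the homoclinic class) of its periodic points, and then combine upper semicontinuity of isolated sets with lower semicontinuity of homoclinic classes to get Hausdorff continuity. The only difference is that you spell out a few steps the paper leaves implicit, namely invoking Proposition~\ref{Ab}(2) to obtain the saddles $p,q$ of the two indices and giving the forward-invariance argument for $W^u(\mathcal{O}_g(p_g))\subset\Lambda_g(U)$.
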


\begin{proof} Applying Proposition~\ref{Bld} to a dense subset of diffeomorphisms in $\mathrm{RTPHA}_1(U)$, we get an open and dense subset $\mathcal{B}$ of $\mathrm{RTPHA}_1(U)$ such that, for every $f\in \mathcal{B}$, there are a periodic point $p \in \Lambda_f(U)$ and a neighborhood $\mathcal{U}$ of $f$ such that $ \Lambda_g(U) \subset H(p_g, g)$ for every $g \in \mathcal{U}$.  On the other hand, the attractor contains the closure of their unstable manifolds. In particular, they contain their homoclinic classes, so $H(p_g,g) \subset \Lambda_g(U)$ for every $g \in \mathcal{U}$. Therefore, $  \Lambda_g(U) = H(p_g,g)$ for every $g \in \mathcal{U}$.

Since homoclinic classes vary lower semicontinuously and attractors vary upper semicontinuously (see Remarks~\ref{cont.isolated} and \ref{r.HC.properties}), the sets $\Lambda_f(U)$, which are both attractors and homoclinic classes, vary continuously on $f \in \mathcal{B}$.  
\end{proof}

\smallskip

\section{Invariant extensions of the partially hyperbolic splitting}\ \label{ss.extension} 

\medskip

A partially hyperbolic splitting $E^s \oplus E^c \oplus E^u$ defined over a compact invariant set $\Lambda$ can always be extended to a continuous splitting in a neighborhood of $\Lambda$. Although its not always possible to make this extension $Df$-invariant, in some particular cases it is indeed feasible, as we see in the next theorem.

 \begin{teo}[\cite{B12,B17}] \label{extension} \em{There is a residual subset $\mathcal{R}_3 \subset \operatorname{Diff}^1(M)$ with the following property. Let $f \in \mathcal{R}_3$ and $\Lambda=H(p,f)$ be a partially hyperbolic homoclinic class. Then we can extend (not uniquely) the splitting on $\Lambda$ to a partially hyperbolic splitting on a compact neighborhood $U$ of $\Lambda$ that is invariant in the following sense: for every $x \in U$ such that $f(x) \in U$, we have that $Df_x (E^i (x))= E^i(f(x)) \mbox{, for}\: \;i \in\{s,c,u\}$.} 
\end{teo}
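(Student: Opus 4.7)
The plan is to combine two ingredients: a $C^1$-generic identification of homoclinic classes with chain-recurrence classes carrying a useful neighborhood structure, together with the extension machinery for dominated splittings developed in \cite{B17}.

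First, I would take $\mathcal{R}_3$ small enough so that the generic result of Bonatti--Crovisier from \cite{B12} applies: for every $f \in \mathcal{R}_3$, the homoclinic class $H(p,f)$ coincides with the chain-recurrence (Conley) class of $p$. A standard Conley-theoretic consequence is that $H(p,f)$ admits arbitrarily small \emph{filtrating} neighborhoods, i.e., compact neighborhoods $U$ for which the maximal $f$-invariant set in $U$ reduces exactly to $H(p,f)$ and the orbit pieces that enter and exit $U$ are dynamically controlled by nested filtrations. This is the technical input needed to globalize the extension of the splitting from $\Lambda$ to all of $U$.

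Next, I would translate the partially hyperbolic splitting $E^s \oplus E^c \oplus E^u$ on $\Lambda$ into a family of invariant cone fields $C^s, C^{cs}, C^{cu}, C^u$ on $\Lambda$, strictly preserved by $Df$ or $Df^{-1}$ in the usual way. By continuity of $Df$ and openness of strict cone conditions, these cones extend continuously to the filtrating neighborhood $U$ (possibly after shrinking), and the forward/backward invariance persists at every $x \in U$ with $f(x) \in U$. Applying the extension procedure of \cite{B17}, the bundles are then defined on $U$ by a cone-intersection scheme: $E^u(x)$ as a nested intersection of forward iterates of $C^u$ along the maximal backward orbit segment of $x$ contained in $U$, and symmetrically $E^s(x)$ along forward orbit segments. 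The filtrating structure furnished by the first step ensures these segments are long enough for the intersections to produce continuous bundles on the whole of $U$, not merely on the maximal invariant subset. One-step invariance $Df_x E^i(x) = E^i(f(x))$, whenever $x$ and $f(x)$ both lie in $U$, is then automatic from the construction. Finally, the uniform contraction on $E^s$ and uniform expansion on $E^u$ over the compact set $\Lambda$ extend to $U$ by continuity of $Df$, so the extended splitting is genuinely partially hyperbolic.

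The main obstacle is precisely $Df$-invariance on the wandering part of $U$. A naive continuous extension by a partition of unity is not invariant, while a purely dynamical cone-intersection definition would, in principle, only produce an invariant splitting on the set of points whose full orbit stays in $U$. The generic filtrating hypothesis is the crucial ingredient bridging this gap, providing the dynamical coherence needed to turn the local cone-intersection construction into a continuous, one-step-invariant splitting defined on every point of the neighborhood $U$.
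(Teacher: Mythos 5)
Your proposal follows essentially the same route as the paper: combine (i) the $C^1$-generic identification of homoclinic classes with their chain-recurrence classes (Bonatti--Crovisier) and (ii) the extension of dominated/partially hyperbolic splittings to filtrating neighborhoods of chain-recurrent sets (Bonatti--Gan--Wen), the additional cone-field detail you supply being a reasonable gloss on the latter. Note only that you have the two references swapped: the generic coincidence is Remark~1.10 of \cite{B17} (Bonatti--Crovisier), while the extension over chain-recurrent sets is Theorem~7 of \cite{B12} (Bonatti--Gan--Wen).
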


Theorem \ref{extension} is a combination of two important results. Firt, this extension holds for a class of sets known as \emph{chain recurrent sets} (see Theorem 7 of \cite{B12}). Secondly, $C^1$-generic homoclinic classes coincides with its chain recurrent classes (see  Remark 1.10 of \cite{B17}).

\begin{re} \label{otimo} \em{In the case that the homoclinic class is an attractor, Theorem~\ref{extension} holds  openly in $\operatorname{Diff}^1(M)$. The reason is that  transitive  attractors are chain recurrent classes. Hence, in the case of attractors we do not need to invoke Remark 1.10 of \cite{B17} to apply Theorem 7 of \cite{B12}. }
\end{re}

The next two proposition do not assume generic arguments, but requires the homoclinic class to be a chain recurrence class, which holds $C^1$-genericaly. The first is a consequence of Lemma 3.6 in \cite{B12}.  

\begin{pr} \label{ttes} Let  $\Lambda = H(p,f)$ be a partially hyperbolic homoclinic class that is also a chain recurrent class with $\operatorname{index}(p)=d^s$. Let $U$ be a compact neighborhood of $\Lambda$ with an extended invariant splitting $E^s \oplus E^c \oplus E^ u$. Then, for every $x \in \Lambda$, the leaf $\mathcal{F}^s(x)$ is tangent to $E^s$ at every point in~$U$.     
\end{pr}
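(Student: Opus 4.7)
The plan is to combine Lemma 3.6 of \cite{B12}, which provides a strong stable manifold at each point of $U$ whose forward orbit stays in $U$, with the dynamical uniqueness of strong stable manifolds. The role of the hypotheses is to guarantee via Theorem \ref{extension} that the invariant extension exists in the first place.

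First, I would apply Lemma 3.6 of \cite{B12}. Because the extended splitting $E^s \oplus E^c \oplus E^u$ on $U$ is $Df$-invariant along orbits remaining in $U$, uniformly contracting in $E^s$, and dominated by $E^{cu}$, the lemma yields the following: for every $y \in U$ whose forward $f$-orbit is contained in $U$, there is a $C^1$ embedded disk $W^{ss}(y) \subset U$ of dimension $d^s$, tangent to $E^s(y)$ at $y$, characterized as the set of points $w$ near $y$ for which $d(f^n(w), f^n(y))$ decays exponentially at the rate prescribed by the $E^s$-contraction.

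Next, fix $x \in \Lambda$ and a point $z \in \mathcal{F}^s(x) \cap U$. Since $d(f^n(z), f^n(x)) \to 0$ exponentially and $f^n(x) \in \Lambda \subset U$, the forward orbit of $z$ eventually enters and remains in $U$. I would first carry out the argument at a sufficiently high iterate $f^N(z)$, whose forward orbit stays in $U$, and then pull back along the finite orbit segment $z, f(z), \ldots, f^N(z)$ using the extended invariance $Df(E^s) = E^s$, reducing to the case where Lemma 3.6 applies at $z$ itself and gives a disk $W^{ss}(z)$ tangent to $E^s(z)$.

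Finally, I compare $\mathcal{F}^s(x)$ with $W^{ss}(z)$ near $z$. Any point $w \in W^{ss}(z)$ is exponentially asymptotic to $z$, and $z$ is exponentially asymptotic to $x$, so $w$ is exponentially asymptotic to $x$, forcing $w \in \mathcal{F}^s(x)$. Hence $W^{ss}(z) \subset \mathcal{F}^s(x)$ locally; since both are $C^1$ embedded submanifolds of dimension $d^s$ through $z$, they coincide in a neighborhood of $z$, and thus $T_z \mathcal{F}^s(x) = T_z W^{ss}(z) = E^s(z)$.

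The main obstacle is the intermediate step: guaranteeing that the orbit segment used to pull back tangency from $f^N(z)$ to $z$ is contained in $U$, so that the extended invariance of $E^s$ can be invoked along it. For $z$ sufficiently close to $\Lambda$ this is automatic, and in the applications to attractors (where $U$ can be taken as a trapping neighborhood with $f(\overline{U}) \subset U$) the orbit of $z$ stays in $U$ forever. In the general chain-recurrent setting one must either restrict to $z$ with full forward orbit in $U$ and then extend by the continuity of tangent planes along the $C^1$ leaf, or refine the choice of the isolating block $U$ provided by Theorem \ref{extension} so that the relevant orbit segments never leave it.
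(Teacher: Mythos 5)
Your approach diverges from the paper's in a substantive way. The paper uses Lemma~3.6 of \cite{B12} only to establish that $\mathcal{F}^s(p)$ (which equals $W^s(p)$, since $\operatorname{index}(p)=d^s$) is tangent to the extended $E^s$ at every point of $U$, and then \emph{transfers} that tangency to an arbitrary $\mathcal{F}^s(x)$: since $x \in H(p,f)$, there is a sequence $x_n$ of transverse homoclinic points of $p$ converging to $x$; each local disk $\mathcal{F}^s_r(x_n)$ lies inside $\mathcal{F}^s(p)$ and is therefore already tangent to $E^s$ in $U$; by $C^1$-continuity of the strong stable lamination over $\Lambda$, the disks $\mathcal{F}^s_r(x_n)$ converge in $C^1$ to $\mathcal{F}^s_r(x)$, so the tangency passes to the limit, and $r$ is arbitrary. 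Notice that this argument makes essential use of the two hypotheses $\Lambda = H(p,f)$ and $\operatorname{index}(p)=d^s$ (so that homoclinic points of $p$ lie on the \emph{strong} stable leaf of $p$). Your proposal never invokes either of these hypotheses, which is a sign that something is missing.

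Your route instead wants to produce a local strong stable disk $W^{ss}(z)$ tangent to $E^s(z)$ at each $z \in \mathcal{F}^s(x)\cap U$ and then identify it with $\mathcal{F}^s(x)$ near $z$. The identification step is fine in spirit (it is the same HPS characterization the paper uses in the Claim inside Proposition~\ref{p.saturation}), but the existence step has a genuine gap that you yourself flag but do not close. Lemma~3.6 of \cite{B12}, as used in the paper, is a statement about the stable manifold of the periodic point $p$, not a plaque-family theorem giving a strong stable disk at every $y\in U$ with forward orbit in $U$; even granting such a version via HPS, the pullback of tangency from $f^N(z)$ to $z$ requires the entire segment $z, f(z),\dots,f^N(z)$ to lie in $U$, and for a general compact neighborhood $U$ in the chain-recurrent setting this can fail. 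Your proposed fixes do not resolve this: ``restrict to $z$ with full forward orbit in $U$ and extend by continuity'' would require such $z$ to be dense in $\mathcal{F}^s(x)\cap U$, which is not established; and ``refine the choice of $U$'' is not available, because $U$ is part of the data of the proposition. The paper's approach sidesteps all of this by pushing the whole problem onto $\mathcal{F}^s(p)$, where Lemma~3.6 already gives the full statement, and then using the homoclinic class structure rather than dynamics along individual orbit segments.
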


\begin{proof} By Lemma 3.6 of \cite{B12}, $\mathcal{F}^s(p)$ is tangent to $E^s$ at every point in $U$. Since $x \in H(p,f)$, there exist a sequence $\{x_n\}_{n \in \mathbb{N}}$ of transverse homoclinic points of $p$ that accumulates at $x$. By the continuity of the strong stable lamination in $\Lambda$, for every $r>0$, the disk sequence $\{\mathcal{F}^s_{r}(x_n)\}_{n \in \mathbb{N}}$ accumulates (with respect to the $C^1$-topology) to the disk $\mathcal{F}^s_{r}(x)$. Since $\mathcal{F}^s_{r}(x_n) \subset \mathcal{F}^s(p)$, the disks $\mathcal{F}^s_{r}(x_n)$ are tangent to $E^s$ in $U$, so the same holds for  $\mathcal{F}^s_{r}(x)$. By the arbitrary choice of $r$, we conclude the proof of the proposition. 
\end{proof}

\begin{re} \label{r.intersecta.unstable} \em{Observe that Proposition~\ref{ttes} implies that, if $\mathcal{F}^s(x)$ accumulates to a hyperbolic periodic point $q \in \Lambda$ of index $d^s$, then it  intersects the local unstable manifold of $q$. This property holds robustly for generic isolated homoclinic classes or attractors, since they are, robustly, chain recurrent classes (see Corollary 1.13 of \cite{B17}).}
\end{re}

\begin{pr} \label{p.saturation} 
Under the hypotheses of Proposition~\ref{ttes}, if  $y \in \overline{\mathcal{F}^s(x)} \cap \Lambda$, then $\mathcal{F}^s(y) \subset \overline{\mathcal{F}^s(x)}$. 
\end{pr}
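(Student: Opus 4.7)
My plan is to reduce the assertion to a local statement using the forward contraction of $f$ along the strong stable direction, and then to prove the local statement by combining Proposition~\ref{ttes} with a plaque family for the extended bundle $E^s$ on $U$.

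\textbf{Step 1 (Reduction to a local claim).} Fix $w \in \mathcal{F}^s(y)$. By the exponential contraction of forward iterates inside a strong stable leaf, for any prescribed $r > 0$ there is $N$ with $f^N(w) \in \mathcal{F}^s_r(f^N(y))$. Because $f^N(\mathcal{F}^s(x)) = \mathcal{F}^s(f^N(x))$ and $f^N(y) \in \overline{\mathcal{F}^s(f^N(x))} \cap \Lambda$, replacing $(x,y,w)$ by $(f^N(x), f^N(y), f^N(w))$ and applying $f^{-N}$ afterwards reduces the problem to the local statement: there is $r_0 > 0$ such that $\mathcal{F}^s_{r_0}(y) \subset \overline{\mathcal{F}^s(x)}$.

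\textbf{Step 2 (Local statement).} Pick $z_n \in \mathcal{F}^s(x)$ with $z_n \to y$, so $z_n \in U$ for $n$ large. By Proposition~\ref{ttes}, $\mathcal{F}^s(x)$ is tangent to the extended bundle $E^s$ at each $z_n$. I would invoke a Hirsch--Pugh--Shub plaque family $\{\mathcal{P}(z)\}_{z \in U}$ for the extended invariant splitting: a continuous family of $C^1$-embedded disks of uniform radius $r_0$, tangent to $E^s$, locally $f$-invariant, and agreeing with the local leaves $\mathcal{F}^s_{r_0}(z)$ for every $z \in \Lambda$. Combining local invariance of $\mathcal{P}$ with the uniform contraction of $Df|_{E^s}$ and the fact that the forward orbit of $z_n \in \mathcal{F}^s(x)$ is asymptotic to $\mathcal{O}_f(x) \subset \Lambda$, one deduces $\mathcal{P}(z_n) \subset \mathcal{F}^s(x)$: for $u \in \mathcal{P}(z_n)$ one gets $d(f^k(u), f^k(z_n)) \to 0$ exponentially, hence $u \in \mathcal{F}^s(z_n) = \mathcal{F}^s(x)$. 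By continuity of the plaque family, $\mathcal{P}(z_n) \to \mathcal{P}(y) = \mathcal{F}^s_{r_0}(y)$ in the $C^1$-topology. Therefore every $w \in \mathcal{F}^s_{r_0}(y)$ admits approximating points $w_n \in \mathcal{P}(z_n) \subset \mathcal{F}^s(x)$, yielding $w \in \overline{\mathcal{F}^s(x)}$.

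\textbf{Main obstacle.} The delicate point is upgrading the pointwise tangency furnished by Proposition~\ref{ttes} to the manifold inclusion $\mathcal{P}(z_n) \subset \mathcal{F}^s(x)$. This requires controlling that the forward iterates of $\mathcal{P}(z_n)$ remain in the isolating neighborhood $U$, so that the local invariance of the plaque family may be applied for all $k \geq 0$, after which the uniform contraction on $E^s$ forces every point of $\mathcal{P}(z_n)$ to be asymptotic to $\mathcal{O}_f(x)$. The argument is cleanest when $U$ is chosen as a trapping neighborhood of the attractor, which is the setting in which the proposition is used throughout the paper; then orbits in $U$ stay in $U$ for free, and the convergence $\mathcal{P}(z_n) \to \mathcal{F}^s_{r_0}(y)$ goes through without further subtleties.
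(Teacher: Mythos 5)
Your plan is correct in its broad strokes and reaches the conclusion by the same two-stage structure the paper uses: reduce to a local inclusion by pushing forward with $f^n$ and pulling back (your Step 1 is exactly the paper's ``fix $n$, let $z=f^n(y)$, conclude with $f^{-n}$'' bookend), and then verify the local inclusion using Proposition~\ref{ttes} plus the Hirsch--Pugh--Shub asymptotic characterization of $\mathcal{F}^s_r(z)$. Where you diverge is in how you handle the footnoted subtlety that the approximating points $z_n\in\mathcal{F}^s(x)$ need not lie in $\Lambda$, so Remark~\ref{r.continuity} cannot be applied directly. The paper sidesteps the need for any continuous plaque family over $U$: it considers the local leaf disks $\mathcal{F}^s_r(x_k)$, which are \emph{automatically} contained in $\mathcal{F}^s(f^n(x))$, observes via Proposition~\ref{ttes} that they are all tangent to the extended $E^s$, extracts a $C^1$-convergent subsequence by compactness of families of uniformly tangent disks, and then identifies the limit with $\mathcal{F}^s_r(z)$ by the contraction/HPS argument. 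You instead posit a globally defined, continuous, locally invariant HPS plaque family $\{\mathcal{P}(z)\}_{z\in U}$ for the extended splitting, prove $\mathcal{P}(z_n)\subset\mathcal{F}^s(x)$ by the contraction argument, and pass to the limit using continuity of $z\mapsto\mathcal{P}(z)$. This works, but it is heavier machinery: the existence (and $C^1$-continuity) of such a plaque family over the non-invariant set $U$ is itself something to justify, and the inclusion $\mathcal{P}(z_n)\subset\mathcal{F}^s(x)$ is an extra step that the paper gets for free by working with the leaf disks $\mathcal{F}^s_r(x_k)$ directly. The compactness route is more economical and self-contained; your plaque-family route is sound in spirit but should either cite a construction of the plaque family for the extended bundle on $U$ or be replaced by the subsequence argument.
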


\begin{proof}
Fix $n \in \mathbb{N}$ and let $z=f^n(y)$. Clearly $ \mathcal{F}^s(f^n(x))$ accumulates to $z$. Consider a sequence $\{x_k\}_{k \in \mathbb{N}}$ of points in $\mathcal{F}^s(f^n(x))$ converging\footnote{We do not know \emph{a priori} if the sequence $\{x_k\}_{k \in \mathbb{N}}$ could be taken inside $\Lambda$. That is the reason why this proposition is not an immediate consequence of the continuity of $\mathcal{F}^s$.} to the point $z$, and fix $r>0\,$ sufficiently small so that any disk of radius $r$ and centered at some point of $\Lambda$ lies inside $U$. Consider the sequence of disks $\{\mathcal{F}^s_r(x_k)\}_{k \in \mathbb{N}}$. By Proposition \ref{ttes}, each disk $\mathcal{F}^s_r(x_k)$ is tangent to the extended bundle $E^s$ in $U$. Hence, we can take a subsequence of it that converges (in the $C^1$-topology) to a $C^1$-topological disk $D$ that is also tangent to $E^s$. 

\smallskip

\begin{afir} $D = \mathcal{F}_r^s(z)$.
\end{afir}

\begin{proof}[Proof of the claim] \label{cclaim}

As $D$ is tangent to $E^s$, the set $f^m(D)$ must contract exponentially fast to $f^m(z)$. From the Hirsh-Pugh-Shub theory (see \cite{B1}, Theorem 5.4) the set $\mathcal{F}^s_r(z)$ characterises the points near $z$ with this asymptotic behavior, so $D \subset \mathcal{F}^s(z)$. On the other hand, $D$ is a topological manifold of dimension $d^s$ and radius $r$, so $D = \mathcal{F}_r^s(z)$.
\end{proof}

\smallskip

By this claim, $\mathcal{F}_r^s(z) \subset \overline{\mathcal{F}^s(f^n(x))}$. Taking the n-th pre-image on this inequality, and using the invariance of the lamination, we obtain that

$$f^{-n}\mathcal{F}_r^s(f^n(y)) \subset \overline{\mathcal{F}^s(x)}.$$
As this holds for every $n \in \mathbb{N}$, we conclude that $\mathcal{F}^s(y) \subset \overline{\mathcal{F}^s(x)}$. \end{proof}

In what follows, we fix the set $\mathcal{R} = \mathcal{R}_0 \cap \mathcal{R}_1 \cap \mathcal{R}_2 \cap \mathcal{R}_3$ and assume that the isolating block $U$ of an attractor $\Lambda_f(U)$ is always endowed with an extended partially hyperbolic splitting. 
\smallskip

\section{Generic absence of local strong homoclinic  intersections }
\bigskip

In what follows $\Lambda_f(U)$ is a partially hyperbolic set with $d^c\geq1$.  \smallskip

Given $n \in \mathbb{N}$, let $\operatorname{Per}(n , f_{|_U})$ be the set of periodic points $p \in\Lambda_f(U)$ with $\pi(p) \leq n$. By item (1) of Theorem~\ref{t.generic}, for every $f \in \mathcal{R}$ and $n \in \mathbb{N}$, the set $\operatorname{Per}(n , f_{|_U})$ is a finite hyperbolic set.

\begin{re} \label{r.per_n} \em{Given $f \in \mathcal{R}$ and $n \in \mathbb{N}$, there is a neighborhood $\mathcal{U}_n$ of $f$ such that, for every $g \in \mathcal{U}_n$, the set $\operatorname{Per}(n ,  g_{|_U})$ consists of the continuation of  $\operatorname{Per}(n,  f_{|_{U}})$ as a hyperbolic set.}
\end{re}

In the next two lemmas we use a standard Kupka-Smale-like argument to guarantee that some local strong stable and strong unstable disks do not intersect each other.

\begin{lema} \label{l.per} Let $\mathcal{U}$ be a compatible neighborhood of $f \in \mathcal{R}$ with respect to the attractor $\Lambda_f(U)$. Fixed $\varepsilon > 0$, there is a residual subset $\mathcal{G}$ of $\mathcal{U}$ such that, for every $g \in \mathcal{G}$ and every pair of distinct periodic points $a,b$ of $\Lambda_g(U)$, it holds that
$$\mathcal{F}^s_{\varepsilon}(a,g) \cap \mathcal{F}_{\varepsilon}^u(b,g) = \emptyset. $$
\end{lema}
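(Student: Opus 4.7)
The approach is a standard Kupka--Smale style argument: I handle periodic points of bounded period via finitely many $C^1$-small perturbations, and then take a countable intersection over the period bound.

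For each $n \in \mathbb{N}$, let $\mathcal{U}_n \subset \mathcal{U}$ be a neighborhood as in Remark~\ref{r.per_n}, so that $\operatorname{Per}(n, g_{|U})$ is the hyperbolic continuation of the finite set $\operatorname{Per}(n, f_{|U})$ for every $g \in \mathcal{U}_n$. Define
\[
\mathcal{V}_n := \left\{g \in \mathcal{U}_n : \overline{\mathcal{F}^s_{\varepsilon}(a,g)} \cap \overline{\mathcal{F}^u_{\varepsilon}(b,g)} = \emptyset \text{ for every distinct } a,b \in \operatorname{Per}(n,g_{|U})\right\}.
\]
First I would verify that $\mathcal{V}_n$ is open in $\mathcal{U}_n$. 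The set $\operatorname{Per}(n, g_{|U})$ is finite with constant cardinality on $\mathcal{U}_n$ and its points vary continuously with $g$; by Remark~\ref{r.continuity}, the closed disks above also vary continuously in the $C^1$-topology. Since disjointness of finitely many pairs of compact sets is an open condition, $\mathcal{V}_n$ is open.

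The main step is to show that $\mathcal{V}_n$ is dense in $\mathcal{U}_n$. Given $g \in \mathcal{U}_n$ and a pair of distinct points $a, b \in \operatorname{Per}(n, g_{|U})$ whose closed strong disks meet at some $x$, I would pick a large forward iterate $g^k(x)$: this iterate lies in $\mathcal{F}^s(a,g)$, converges exponentially to $\mathcal{O}_g(a)$, and for $k$ sufficiently large can be taken off all the finitely many orbits in $\operatorname{Per}(n, g_{|U})$, with a small ambient ball $B$ around it disjoint from these orbits and from the closed strong disks of all other offending pairs. A $C^1$-small perturbation of $g$ supported in $B$ leaves $\operatorname{Per}(n, g_{|U})$ and the other pairs of disks unchanged, but moves the piece of $\mathcal{F}^s(a, g)$ passing through $g^k(x)$; pulling back by $g^{-k}$ displaces $\mathcal{F}^s_{\varepsilon}(a, g)$ transversally in the ambient manifold. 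Since $d^s + d^u = \dim M - d^c < \dim M$, a generic such perturbation produces separation of the closed disks. Repeating this for each of the finitely many offending pairs yields a $g' \in \mathcal{V}_n$ arbitrarily close to $g$.

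Setting $\mathcal{G} := \bigcap_n \mathcal{V}_n$ then gives a residual subset of $\mathcal{U}$ by Baire. For any $g \in \mathcal{G}$ and distinct periodic points $a, b \in \Lambda_g(U)$, choosing $n \geq \max\{\pi(a), \pi(b)\}$ places $a, b \in \operatorname{Per}(n, g_{|U})$, so $\overline{\mathcal{F}^s_{\varepsilon}(a,g)} \cap \overline{\mathcal{F}^u_{\varepsilon}(b,g)} = \emptyset$, which implies the stated conclusion for the open $\varepsilon$-disks.

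The main obstacle will be the perturbation step: one must construct a $C^1$-small perturbation supported in a ball disjoint from the relevant periodic orbits and from the other pairs of disks under consideration, which nonetheless effectively separates one strong stable disk from one strong unstable disk. The exponential contraction of $\mathcal{F}^s(a, g)$ toward $\mathcal{O}_g(a)$ supplies room for the support of the perturbation, and the dimension count $d^s + d^u < \dim M$ is what makes separation achievable by a generic choice.
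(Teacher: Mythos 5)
Your finite-period claim and its proof of openness and density are essentially the paper's Claim~\ref{l.per_n}; in fact your use of closed $\varepsilon$-disks to make disjointness a genuinely open condition is a tidy technical point, and your perturbation argument (the non-transversality forced by $d^s+d^u < \dim M$, plus localizing the support of the perturbation far along the stable leaf, away from the orbits of $\operatorname{Per}(n,g_{|U})$) is correct and somewhat more detailed than the paper's one-line appeal to Kupka--Smale.

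However, the final assembly step has a genuine gap. You set $\mathcal{G}=\bigcap_n \mathcal{V}_n$ and claim this is residual in $\mathcal{U}$, but each $\mathcal{V}_n$ is only open and dense in $\mathcal{U}_n$, where $\mathcal{U}_n$ is the neighborhood of $f$ from Remark~\ref{r.per_n}. As $n$ grows, $\operatorname{Per}(n,f_{|U})$ accumulates more and more orbits, so the neighborhoods $\mathcal{U}_n$ necessarily shrink; thus $\bigcap_n \mathcal{V}_n \subset \bigcap_n \mathcal{U}_n$, which need not be residual in $\mathcal{U}$ and may be as small as $\{f\}$. Residuality in $\mathcal{U}$ requires each term of the countable intersection to contain an open and dense subset of all of $\mathcal{U}$, not merely of a shrinking neighborhood of the single point $f$.

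The paper closes this gap with one extra step: it fixes a countable dense subset $\{f_i\}_{i\in\mathbb{N}}$ of $\mathcal{U}\cap\mathcal{R}$, applies the finite-period claim at each $f_i$ (for the fixed $n$) to get an open dense $\mathcal{V}^n_i$ inside a small neighborhood of $f_i$, and then sets $\mathcal{G}_n=\bigcup_i\mathcal{V}^n_i$. Because the $f_i$ are dense in $\mathcal{U}$, the union $\mathcal{G}_n$ is open and dense in all of $\mathcal{U}$; only then is $\mathcal{G}=\bigcap_n\mathcal{G}_n$ residual in $\mathcal{U}$ by Baire. You should insert this ``cover $\mathcal{U}$ by a dense family of base-points'' step between your claim for fixed $n$ and the final intersection.
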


\begin{proof}
This lemma follows from the following claim.

\begin{afir} \label{l.per_n} Fix $n \in \mathbb{N}$. Let $\mathcal{U}$ be a compatible neighborhood of $f \in \mathcal{R}$ such that, for $g \in \mathcal{U}$, the set $\operatorname{Per}(n ,  g_{|_U})$ is the continuation of the hyperbolic set $\operatorname{Per}(n ,  f_{|_U})$. Fixed $\varepsilon>0$, there is an open and dense subset $\mathcal{V}$ of $\mathcal{U}$ such that, for every $g \in \mathcal{V}$ and every pair of distinct points $p_g,q_g \in \operatorname{Per}(n ,  g_{|_U})$ it holds that
$$\mathcal{F}^s_{\varepsilon}(p_g,g) \cap \mathcal{F}_{\varepsilon}^u(q_g,g) = \emptyset. $$
\end{afir}

\begin{proof}[Proof of the claim] Fix $p,q \in \operatorname{Per}(n ,  f_{|_U})$ with $p \neq q$. By the continuous dependence of the leafs on $g$, if $\mathcal{F}^s_{\varepsilon}(p_g,g) \cap \mathcal{F}_{\varepsilon}^u(q_g,g) = \emptyset$, then the same holds for any diffeomorphism in an open neighborhood of $g$. 

\smallskip

On the other hand, since $d^u+d^s$ is less than the ambient dimension, if $\mathcal{F}^s_{\varepsilon}(p_g,g) \cap \mathcal{F}_{\varepsilon}^u(q_g,g) \neq \emptyset$, then this intersection is not transverse. Hence, after an arbitrarily small perturbation, we can assume that the disks $\mathcal{F}^s_{\varepsilon}(p_g,g)$ and $\mathcal{F}_{\varepsilon}^u(q_g,g)$ are disjoint. As a conclusion, there is an open and dense subset $\mathcal{V}_{p,q}$ of $\mathcal{U}$ such that
\begin{equation} \label{e.su.empty} \mathcal{F}^s_{\varepsilon}(p_g,g) \cap \mathcal{F}_{\varepsilon}^u(q_g,g) = \emptyset,  \mbox{ for every } g \in \mathcal{V}_{p,q}.  \end{equation}

By Remark~\ref{r.per_n}, the set $$ \displaystyle \mathcal{V} = \bigcap_{(p,q) \in \mathcal{B}} \mathcal{V}_{p,q} \ ,  \mbox{ where } \mathcal{B} = \{(p,q) \in \{\operatorname{Per}(n ,  f_{|_U})\}^2  \ | \ p \neq q \},$$ is a finite intersection of open and dense subsets of $\mathcal{U}$, which means that $\mathcal{V}$ is also  open and dense in $\mathcal{U}$. By construction, the set $\mathcal{V}$ satisfies the required property in this Claim.~\end{proof}

To conclude the proof of the Lemma~\ref{l.per}, we use a genericity argument as follows. Let $\{f_i\}_{i\in \mathbb{N}}$ be a dense subset of $\mathcal{U}\cap \mathcal{R}$. Fixed $n \in \mathbb{N}$, we apply Claim~\ref{l.per_n} to each $f_i$, $i \in \mathbb{N}$. In this way, we obtain an open and dense subset $\mathcal{V}_i^n$ of a neighborhood of $f_i$, satisfying the non intersection condition in Claim~\ref{l.per_n}. Note that $\mathcal{G}_n = \bigcup_{i \in \mathbb{N}} \mathcal{V}_i^n$ is an open and dense subset of $\mathcal{U}$. Finally, we set $\mathcal{G} = \bigcap_{n \in \mathbb{N}} \mathcal{G}_n$, which is the desired residual subset of $\mathcal{U}$ in Lemma~\ref{l.per}. \end{proof}

\section{Minimality}
\label{minimality}

For notational simplicity, given a strongly partially hyperbolic set $\Lambda$  we adopt the following notation. $$\mathcal{F}^{s}_{\Lambda}(x) = \mathcal{F}^{s}(x) \cap \Lambda  \quad \mbox{and} \quad \mathcal{F}^{u}_{\Lambda}(x) = \mathcal{F}^{u}(x) \cap \Lambda. $$

\begin{de}[$s$ and $u$-minimal laminations] \label{def.minimal} \em{Let $\Lambda$ be a  partially hyperbolic set of a diffeomorphism $f$ with nontrivial stable bundle $E^s$. We say that the lamination $\mathcal{F}^{s}$ is \emph{minimal} if there is $d \in \mathbb{N}$ such that,
for all $x \in \Lambda$, it holds that
$$
\bigcup_{i=1}^{d} \overline{\mathcal{F}_{\Lambda}^{s}(f^i(x))} = \Lambda.
$$ 
In this case we say that $\Lambda$ is an {\emph{$s$-minimal set}}.

We say that  $\Lambda$ is a \emph{robustly $s$-minimal set} if $\Lambda=\Lambda_f(U)$ is an isolated set, and  $\Lambda_g(U)$ is $s$-minimal for all $g$ in a neighborhood $\mathcal{U}$ of $f$. If $s$-minimality is verified only in a residual subset of $\mathcal{U}$, then we say that $\Lambda_f(U)$ is a \emph{generically $s$-minimal set}.

The definition of $u$-minimality is analogous, considering the 
strong unstable lamination $\mathcal{F}^{u}$. }
\end{de}

\smallskip


\subsection{A criterion for minimality}\
\medskip

Here we stablish a criterion to verify $u$- or $s$-minimality on homoclinic classes.  

\begin{teo}[criterion for minimality] \label{t.criterion2}  Let $f \in \text{Diff}^1(M)$ and $\Lambda = H(p,f)$ be a  partially hyperbolic homoclinic that is also a chain recurrent class with $\operatorname{index}(p)= d^s$ . Let $U$ be a compact neighborhood of $\Lambda$ with an extended invariant splitting $E^s \oplus E^c \oplus E^u$. If   $p \in \overline{\mathcal{O}_f(\mathcal{F}^{s}(x))}$ for every $x\in \Lambda$, then $\Lambda$ is $s$-minimal.
\end{teo}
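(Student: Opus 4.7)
My plan has three main steps. First, I would establish the covering at the orbit of $p$. Since $\operatorname{index}(p) = d^s$, the stable manifold $W^s(f^i(p))$ coincides with the strong stable leaf $\mathcal{F}^s(f^i(p))$. Because $\Lambda = H(p,f)$, transverse homoclinic points of $p$ are dense in $\Lambda$ and each one lies in $\mathcal{F}^s_\Lambda(f^i(p))$ for some $i \in \{1,\dots,\pi(p)\}$; hence
$$\Lambda \;=\; \bigcup_{i=1}^{\pi(p)} \overline{\mathcal{F}^s_\Lambda(f^i(p))},$$
which gives the conclusion at the orbit of $p$ with $d_0 = \pi(p)$.

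Second, I would produce a bounded iterate $r^* \in \{1,\dots,\pi(p)\}$ with $p \in \overline{\mathcal{F}^s_\Lambda(f^{r^*}(x))}$. The hypothesis gives $y_n \in \mathcal{F}^s(f^{k_n}(x))$ with $y_n \to p$; Proposition~\ref{ttes} shows each disk $\mathcal{F}^s_r(y_n)$ is tangent to the extended $E^s$ in $U$, so a subsequential $C^1$-limit is a disk at $p$ tangent to $E^s$, hence equal to $\mathcal{F}^s_r(p)$ by the Hirsch--Pugh--Shub characterization. Combined with Remark~\ref{r.intersecta.unstable} and the inclusion $W^u_{loc}(p) \subset \Lambda$ (valid in the attractor setting where this theorem is applied), one obtains transverse intersection points $z_n \in \mathcal{F}^s_\Lambda(f^{k_n}(x)) \cap W^u_{loc}(p)$ with $z_n \to p$. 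Writing $k_n = q_n \pi(p) + r_n$ with $r_n \in \{0,\dots,\pi(p)-1\}$, backward-iterating by $f^{-q_n \pi(p)}$ (which contracts $W^u$ toward $p$) gives points $f^{-q_n \pi(p)}(z_n) \in \mathcal{F}^s_\Lambda(f^{r_n}(x))$ still converging to $p$. A pigeonhole on the residues $r_n$ then produces a single $r^* \in \{0,\dots,\pi(p)-1\}$ with $p \in \overline{\mathcal{F}^s_\Lambda(f^{r^*}(x))}$, and a $\pi(p)$-shift handles the case $r^* = 0$.

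Third, I would propagate the cover and extract a uniform $d$. Proposition~\ref{p.saturation} upgrades the previous step to $\mathcal{F}^s(p) \subset \overline{\mathcal{F}^s(f^{r^*}(x))}$; applying $f^j$ for $j = 1,\dots,\pi(p)$ yields $\mathcal{F}^s(f^j(p)) \subset \overline{\mathcal{F}^s(f^{r^*+j}(x))}$ with indices in $\{2,\dots,2\pi(p)\}$. Intersecting with $\Lambda$ and combining with the base case,
$$\Lambda = \bigcup_{j=1}^{\pi(p)} \overline{\mathcal{F}^s_\Lambda(f^j(p))} \;\subset\; \bigcup_{i=1}^{2\pi(p)} \overline{\mathcal{F}^s(f^i(x))} \cap \Lambda,$$
uniformly in $x$, which after the final upgrade discussed below gives $s$-minimality with $d = 2\pi(p)$.

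The main obstacle is exactly this last upgrade: passing from $\overline{\mathcal{F}^s(f^i(x))} \cap \Lambda$ to $\overline{\mathcal{F}^s_\Lambda(f^i(x))}$ as required by Definition~\ref{def.minimal}, since a priori the former can be strictly larger. I expect this to be handled by iterating the Step 2 construction along the orbit of $p$ (to place each $f^j(p)$ itself in some $\overline{\mathcal{F}^s_\Lambda(f^{i_j}(x))}$ with $i_j$ bounded), and then combining Proposition~\ref{p.saturation} with the density of transverse homoclinic points of $p$ along each leaf $\mathcal{F}^s(f^j(p))$ to upgrade the approximation from the periodic orbit to all of $\mathcal{F}^s_\Lambda(f^j(p))$, using the $C^1$-continuity of the lamination from Remark~\ref{r.continuity}.
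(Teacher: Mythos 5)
Your overall skeleton matches the paper's: establish the cover at the orbit of $p$ (which is exactly the identity $\Lambda = H(p,f) = \bigcup_{i=1}^{\pi(p)}\overline{\mathcal{F}^s_\Lambda(f^i(p))}$), place $p$ in the closure of a \emph{bounded} iterate of the leaf through $x$, and then upgrade from the abstract closure to the closure of the intersection with $\Lambda$ via Proposition~\ref{p.saturation} together with the density of transverse homoclinic points; the last step is precisely the content of Lemma~\ref{l.restriction}, which you have correctly reinvented in spirit. However, your Step 2 has a genuine gap. You write $k_n=q_n\pi(p)+r_n$ and apply $f^{-q_n\pi(p)}$ to the intersection points $z_n\in W^u_{\mathrm{loc}}(p)$, claiming this ``contracts $W^u$ toward $p$.'' That is only true when $q_n\ge 0$. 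The hypothesis $p\in\overline{\mathcal{O}_f(\mathcal{F}^s(x))}$ allows the accumulation to occur entirely along backward iterates $k_n\to-\infty$; indeed the $\lambda$-lemma shows that once $p\in B_\varepsilon(\mathcal{F}^s(f^{k}(x)))$ for one $k$, the same holds for all $k'\le k$ sufficiently negative, so one can \emph{never} rule out this regime. For $q_n<0$ your iterate $f^{-q_n\pi(p)}$ \emph{expands} $W^u_{\mathrm{loc}}(p)$, so $f^{-q_n\pi(p)}(z_n)$ need not stay near $p$ and the pigeonhole collapses.

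The paper resolves exactly this difficulty with a different mechanism. It first uses the $\lambda$-lemma on the transverse intersection $\mathcal{F}^s(f^{j_1}(x))\pitchfork W^u_{\mathrm{loc}}(p)$ to produce, for each $x$, an index $j_2(x)$ with $\Lambda\subset B_{\varepsilon/2}\bigl(\bigcup_{i=1}^{d}\mathcal{F}^s(f^{-j+i}(x))\bigr)$ for all $j\ge j_2(x)$; then, by continuity of the lamination and compactness of $\Lambda$, extracts a single $J$ that works for \emph{every} $y\in\Lambda$; and finally applies this \emph{uniform} inclusion to the point $y=f^J(x)$, which shifts the iterates to $\{1,\dots,d\}$. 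This last substitution is the crucial trick you are missing --- it is what converts an accumulation along arbitrarily negative iterates into a bounded positive one, and it only works because the compactness argument makes $J$ independent of the base point. A secondary issue: your Step 2 also invokes $W^u_{\mathrm{loc}}(p)\subset\Lambda$, which is an attractor hypothesis not present in the theorem as stated (it assumes only a chain recurrent homoclinic class). The paper's Lemma~\ref{l.restriction} sidesteps this by proving that the relevant intersection point $w\in\mathcal{F}^s(x)\cap W^u(\mathcal{O}_f(p))$ is a transverse homoclinic point of $p$, hence automatically in $\Lambda=H(p,f)$, with no need for $W^u_{\mathrm{loc}}(p)$ to lie in $\Lambda$.
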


\begin{cor}[generic version] \label{t.criterion}  Let $f \in \mathcal{R}$ and $\Lambda = H(p,f)$ be a partially hyperbolic homoclinic class with $\operatorname{index}(p)= d^s$. If $\overline{\mathcal{O}_f(\mathcal{F}^{s}(x))} \cap \operatorname{Per}_{d^s}(f_{|_{\Lambda}}) \not=~\emptyset$ for every $x\in \Lambda$, then $\Lambda$ is $s$-minimal.
\end{cor}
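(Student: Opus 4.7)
The plan is to deduce the corollary from Theorem~\ref{t.criterion2}, applied with the original periodic point $p$. Given $x \in \Lambda$, the hypothesis furnishes some $q_x \in \operatorname{Per}_{d^s}(f_{|_{\Lambda}})$ (possibly depending on $x$) with $q_x \in \overline{\mathcal{O}_f(\mathcal{F}^s(x))}$, and one needs to upgrade this to the stronger assertion that $p$ itself lies in $\overline{\mathcal{O}_f(\mathcal{F}^s(x))}$.

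The first ingredient is an orbit-version of Proposition~\ref{p.saturation}: if $y \in \overline{\mathcal{O}_f(\mathcal{F}^s(x))} \cap \Lambda$, then $\mathcal{F}^s(y) \subset \overline{\mathcal{O}_f(\mathcal{F}^s(x))}$. Since $f \in \mathcal{R} \subset \mathcal{R}_3$, Theorem~\ref{extension} supplies the extended invariant splitting on a neighborhood $U$ of $\Lambda$, and $\Lambda$ is a chain recurrent class, so Propositions~\ref{ttes} and \ref{p.saturation} are at our disposal. The proof mimics Proposition~\ref{p.saturation}: approximate $y$ by points $z_k \in \mathcal{F}^s(f^{n_k}(x))$; invoke Proposition~\ref{ttes} to see that each small disk $\mathcal{F}^s_r(z_k)$ is tangent to the extended $E^s$ at every point of $U$; extract a $C^1$-convergent subsequence of these disks; and identify the limit through $y$ with $\mathcal{F}^s_r(y)$ via the exponential-contraction characterization of strong stable leaves. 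Iterating this local inclusion along the forward orbit of $y$ and using $f$-invariance of $\overline{\mathcal{O}_f(\mathcal{F}^s(x))}$ then upgrades the local statement to the full leaf.

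Applying the saturation with $y = q_x$ gives $\mathcal{F}^s(q_x) \subset \overline{\mathcal{O}_f(\mathcal{F}^s(x))}$, and by $f$-invariance also $\mathcal{O}_f(\mathcal{F}^s(q_x)) \subset \overline{\mathcal{O}_f(\mathcal{F}^s(x))}$. Since $\operatorname{index}(q_x) = d^s$, the strong stable leaf of $q_x$ coincides with its full stable manifold, so in fact $W^s(\mathcal{O}_f(q_x)) \subset \overline{\mathcal{O}_f(\mathcal{F}^s(x))}$. Because $f \in \mathcal{R}_0$, item~(3) of Theorem~\ref{t.generic} applied to $q_x \in \Lambda = H(p,f)$ forces $H(q_x, f) = H(p, f) = \Lambda$, and therefore
$$p \in H(q_x, f) \subset \overline{W^s(\mathcal{O}_f(q_x))} \subset \overline{\mathcal{O}_f(\mathcal{F}^s(x))}.$$
Since $x \in \Lambda$ was arbitrary, Theorem~\ref{t.criterion2} delivers the $s$-minimality of $\Lambda$.

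The main obstacle is the orbit-saturation step: unlike Proposition~\ref{p.saturation}, which treats a single leaf $\mathcal{F}^s(x)$, the approximating points $z_k$ may lie on strong stable leaves of arbitrarily distant iterates $f^{n_k}(x)$, so the usual continuity of the lamination is not enough to control the limits. What rescues the argument is precisely the uniform tangency, supplied by Proposition~\ref{ttes}, of \emph{every} strong stable leaf to the extended $E^s$ throughout $U$; this is what permits the $C^1$-compactness step to go through regardless of the behavior of the indices $n_k$.
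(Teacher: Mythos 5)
Your overall structure matches the paper's: starting from the hypothesis that some periodic point $q_x$ of index $d^s$ lies in $\overline{\mathcal{O}_f(\mathcal{F}^s(x))}$, you upgrade to the inclusion $\mathcal{O}_f(\mathcal{F}^s(q_x)) \subset \overline{\mathcal{O}_f(\mathcal{F}^s(x))}$, note $\mathcal{F}^s(q_x) = W^s(q_x)$, invoke item (3) of Theorem~\ref{t.generic} to get $\Lambda = H(q_x, f)$, and feed the resulting inclusion into Theorem~\ref{t.criterion2}. That is exactly the skeleton of the paper's proof, and your argument is correct.

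The one place where you diverge is the mechanism for the upgrade step. The paper dispatches it with a single stroke: since $q_x$ has index $d^s$, Remark~\ref{r.intersecta.unstable} (a direct corollary of Proposition~\ref{ttes}) guarantees that a leaf $\mathcal{F}^s(f^n(x))$ close to $q_x$ meets $W^u_{\mathrm{loc}}(q_x)$ transversely, and the $\lambda$-Lemma applied to this transversal intersection already yields $\overline{\mathcal{O}_f(\mathcal{F}^s(q_x))} \subset \overline{\mathcal{O}_f(\mathcal{F}^s(x))}$. You instead re-run the compactness argument of Proposition~\ref{p.saturation} in an orbit-saturated form, using the uniform tangency to the extended $E^s$ from Proposition~\ref{ttes} to extract a $C^1$-limit disk through $q_x$, then iterating to get the full leaf. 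Both routes ultimately rest on Proposition~\ref{ttes}, and both are valid; the paper's is shorter because the $\lambda$-Lemma already encodes the iteration you perform by hand, and it avoids re-deriving a strengthened version of Proposition~\ref{p.saturation}. Your closing observation correctly identifies why the naive continuity of the lamination would not suffice and why the extended invariant splitting is what saves the compactness step; that is precisely the point that makes Proposition~\ref{ttes} indispensable in either presentation.
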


\begin{re} \em{\label{R0} 
There are dual statements considering the  
unstable lamination of $H(p,f)$ when $\operatorname{index}(p) =d^s+d^c$.} 
\end{re}

To prove these criterions we need an auxiliary lemma. Here we only treat the $s$-minimal case.

\begin{lema} \label{l.restriction} Let $\Lambda$ be as in Theorem~\ref{t.criterion2} and $\pi(p)=d$. If there is $x \in \Lambda$ such that $p \in \overline{\mathcal{F}^s(x)}$, then $
\Lambda =  \bigcup_{i=1}^{d} \overline{\mathcal{F}^s_{\Lambda}(f^i(x)) }.$
\end{lema}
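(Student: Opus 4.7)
The inclusion $\bigcup_{i=1}^{d}\overline{\mathcal{F}^s_\Lambda(f^i(x))}\subset\Lambda$ is immediate since each $\mathcal{F}^s_\Lambda(f^i(x))$ lies in the closed set $\Lambda$, so I focus on the reverse. The first step is to iterate the hypothesis: since $p\in\overline{\mathcal{F}^s(x)}\cap\Lambda$, Proposition~\ref{p.saturation} gives $\mathcal{F}^s(p)\subset\overline{\mathcal{F}^s(x)}$, and applying $f^i$ (using $f$-invariance of the lamination) yields $\mathcal{F}^s(f^i(p))\subset\overline{\mathcal{F}^s(f^i(x))}$ for every $i\in\mathbb{Z}$. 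Because $\operatorname{index}(p)=d^s$, dominated splitting forces the central direction at $p$ to be expanded, and hence $W^s(f^i(p))=\mathcal{F}^s(f^i(p))$; combined with $\pi(p)=d$ this gives
$$W^s(\mathcal{O}_f(p))\,=\,\bigcup_{i=1}^{d}\mathcal{F}^s(f^i(p))\,\subset\,\bigcup_{i=1}^{d}\overline{\mathcal{F}^s(f^i(x))}.$$

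Given $y\in\Lambda=H(p,f)$, take transverse homoclinic points $h_n\to y$ with $h_n\in W^s(f^{j_n}(p))\pitchfork W^u(f^{k_n}(p))$. Pigeonholing on $j_n\in\{1,\dots,d\}$, I may assume $j_n=j$ is constant, so $h_n\in\mathcal{F}^s(f^j(p))\subset\overline{\mathcal{F}^s(f^j(x))}$. Fix $r>0$ small enough that strong stable $r$-disks based at points near $\Lambda$ remain inside the neighborhood $U$ carrying the extended invariant splitting. For each $n$, the $C^1$-convergence argument from the proof of Proposition~\ref{p.saturation} provides a sequence $z_m\in\mathcal{F}^s(f^j(x))$ with $z_m\to h_n$ and with the disks $\mathcal{F}^s_r(z_m)$ converging in $C^1$ to $\mathcal{F}^s_r(h_n)$. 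Since $\mathcal{F}^s_r(h_n)\subset W^s(f^j(p))$ meets $W^u(f^{k_n}(p))$ transversally at $h_n$, persistence of transverse intersections yields, for $m$ large, a point $q_m^n\in\mathcal{F}^s_r(z_m)\cap W^u(f^{k_n}(p))$ with $q_m^n\to h_n$ as $m\to\infty$. In particular $q_m^n\in\mathcal{F}^s(f^j(x))$.

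The main obstacle is to upgrade $q_m^n\in\mathcal{F}^s(f^j(x))$ to $q_m^n\in\mathcal{F}^s_\Lambda(f^j(x))$, i.e., to show $q_m^n\in\Lambda$. I would exploit crucially the standing hypothesis that $\Lambda$ is a chain-recurrence class. Because $q_m^n\in\mathcal{F}^s(f^j(x))$ and $f^j(x)\in\Lambda$, the forward orbit of $q_m^n$ shadows that of $f^j(x)$ and hence $\omega(q_m^n)\subset\Lambda$; because $q_m^n\in W^u(f^{k_n}(p))$, the backward orbit of $q_m^n$ converges to $\mathcal{O}_f(p)\subset\Lambda$. For any $\varepsilon>0$, a long backward orbit-segment of $q_m^n$ starting $\varepsilon$-close to $p$ yields an $\varepsilon$-pseudo-orbit from $p$ to $q_m^n$, while a long forward orbit-segment of $q_m^n$ reaching an $\varepsilon$-neighborhood of $\Lambda$, followed by an $\varepsilon$-pseudo-orbit inside the chain-recurrent $\Lambda$, yields an $\varepsilon$-pseudo-orbit from $q_m^n$ to $p$. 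Thus $q_m^n$ is chain-equivalent to $p$, and so $q_m^n$ lies in the chain-recurrence class of $p$, which by hypothesis is $\Lambda$. Therefore $q_m^n\in\mathcal{F}^s_\Lambda(f^j(x))$; letting $m\to\infty$ and then $n\to\infty$ gives $y\in\overline{\mathcal{F}^s_\Lambda(f^j(x))}\subset\bigcup_{i=1}^{d}\overline{\mathcal{F}^s_\Lambda(f^i(x))}$, completing the proof.
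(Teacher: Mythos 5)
Your proposal is correct, and its overall skeleton matches the paper's: you use $\operatorname{index}(p)=d^s$ to identify $W^s(\mathcal{O}_f(p))$ with $\bigcup_i\mathcal{F}^s(f^i(p))$, approximate $y\in H(p,f)$ by transverse homoclinic points, invoke Proposition~\ref{p.saturation}'s $C^1$-limit-of-strong-stable-disks argument, and use persistence of transverse intersections to produce points of $\mathcal{F}^s(f^j(x))$ near $y$. Where you genuinely diverge is the decisive step of showing the intersection point ($q_m^n$ for you, $w$ in the paper) actually lies in $\Lambda$. The paper proves $w\in H(p,f)$ directly: since $x\in\overline{\mathcal{F}^s_\Lambda(f^j(p))}$ for some $j$, Proposition~\ref{p.saturation} gives $\mathcal{F}^s(f^j(p))\supset\overline{\mathcal{F}^s(x)}\ni w$ — wait, gives accumulation — so strong stable disks inside $W^s(\mathcal{O}_f(p))$ accumulate on $w\in W^u(\mathcal{O}_f(p))$, yielding a sequence of transverse homoclinic points converging to $w$. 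You instead run a pseudo-orbit argument: backward orbit of $q_m^n$ converges to $\mathcal{O}_f(p)$, forward orbit converges to $\omega(f^j(x))\subset\Lambda$, and chain-transitivity inside the chain-recurrence class $\Lambda$ then closes up both directions, so $q_m^n$ is chain-equivalent to $p$ and hence belongs to $\Lambda$. Both steps are valid. Your route leans more transparently on the standing hypothesis that $\Lambda$ is a chain-recurrence class, while the paper's stays entirely inside the homoclinic-class framework and incidentally reproves $w\in H(p,f)$ rather than just $w\in\Lambda$; the paper also factors the argument through the intermediate claim $\mathcal{F}^s_\Lambda(p)\subset\overline{\mathcal{F}^s_\Lambda(x)}$, whereas you attack the conclusion for each $y$ directly — a cosmetic difference once Equation~\eqref{ee} is in hand.
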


\begin{proof}

Note that the inclusion $\bigcup_{i=1}^{d} \overline{\mathcal{F}^s_{\Lambda}(f^i(x))} \subset \Lambda$ is immediate (recall the notation $\mathcal{F}^s_{\Lambda}(x) = \mathcal{F}^ s(x) \cap \Lambda$). To prove that $\Lambda \subset \bigcup_{i=1}^{d} \overline{\mathcal{F}^s_{\Lambda}(f^i(x)) }$, we first observe that $\Lambda \subset  \bigcup_{i=1}^{d} \overline{\mathcal{F}^s_{\Lambda}(f^i(p))}$.    To see this last inclusion, note that as $\pi(p)=d$ and $\operatorname{index}(p)=d^s$, we have 
\begin{equation} \label{ee}
\Lambda = H(p,f)= \overline{W^s(\mathcal{O}_f(p)) \cap \Lambda} = \bigcup_{i=1}^{d} \overline{\mathcal{F}^s_{\Lambda}(f^i(p))}.
\end{equation}

As $p \in \overline{\mathcal{F}^s(x)}$, Proposition~\ref{p.saturation} gives that  $\mathcal{F}^s(p) \subset \overline{\mathcal{F}^s(x)}$. 

\smallskip

We claim that $\mathcal{F}^s_{\Lambda}(p) \subset \overline{\mathcal{F}_{\Lambda}^s(x)}$. Indeed, given $z \in \mathcal{F}^s_{\Lambda}(p)$, consider a transverse homoclinic point $\tilde{z}$ of $p$ close to $z$. Since $\mathcal{F}^s(p) \subset \overline{\mathcal{F}^s(x)}$, the leaf $\mathcal{F}^s(x)$
accumulates at $\tilde{z}$ and intersect $W^u(\mathcal{O}_f(p))$ at a point $w$ that can be chosen arbitrarily close to $\tilde{z}$. By Equation~\eqref{ee}, there is $j\in \mathbb{N}$ such that $\mathcal{F}^s(f^j(p))$ accumulates at $x$ and, by Proposition~\ref{p.saturation}, it also accumulates at $w$. Then, $\mathcal{F}^s(f^j(p))$ meets transversely $W^u(\mathcal{O}_f(p))$ in a sequence of homoclinic points of $p$ converging to $w$,  which means that $w \in H(p,f)$. By consctruction, $w \in \mathcal{F}_{\Lambda}^s(x)$ can be taken arbitrarily close to $z$, so $z \in \overline{\mathcal{F}_{\Lambda}^s(x)}$. From the arbitrary choice of $z \in \mathcal{F}^s_{\Lambda}(p)$, we conclude that $\mathcal{F}^s_{\Lambda}(p) \subset \overline{\mathcal{F}_{\Lambda}^s(x)}$.

\smallskip

This last inclusion and Equation~\eqref{ee} imply that $\Lambda \subset \bigcup_{i=1}^{d} \overline{\mathcal{F}^s_{\Lambda}(f^i(x))}$, finishing the proof of the lemma. \end{proof}  

In what follows, given a compact set $X \subset M$, we denote by $B_{\varepsilon}(X)$ the set of  points in $M$ whose distance to $X$, with respect to a Riemannian metric on $M$, is less than $\varepsilon$.

\begin{proof}[Proof of Theorem~\ref{t.criterion2}] Fix $x \in \Lambda$. Since $p \in \overline{\mathcal{O}(\mathcal{F}^{s}(x))}$, given $\varepsilon >0$ there is $j_1 \in \mathbb{Z}$ such that $p \in B_\varepsilon(\mathcal{F}^{s}(f^{j_1}(x))).$ 
By Proposition \ref{ttes}, the leaf $\mathcal{F}^{s}(f^{j_1}(x))$ intersects transversely the local unstable manifold of $p$, provided $\varepsilon$ is small enough. Hence, by the $\lambda$-lemma, there is
 $j_2(x) \in \mathbb{N}$ such that, for every $j \geq j_2(x)$, it holds that
$$
\Lambda \subset B_{\frac{\varepsilon}{2}} \Big(\bigcup_{i=1}^d 
\mathcal{F}^{s}(f^{-j + i}(x))\Big),\quad \mbox{where\ } \pi(p)=d.
$$
By the continuity of the lamination $\mathcal{F}^{s}$ and the $\lambda$-lemma,
there is a neighborhood $U_x$ of $x$ satisfying
\begin{equation} \label{e.Be}
\Lambda\subset 
B_\varepsilon \Big(
\bigcup_{i=1}^{d} \mathcal{F}^{s}(f^{-j + i}(y))
\Big), \quad \mbox{for all $y \in U_x\cap \Lambda $ and $j \geq j_2(x)$}.
\end{equation}

In this way, for each $x\in \Lambda$
we get a number $j_2(x)$ and a neighborhood $U_x$ of $x$ satisfying Inclusion~\eqref{e.Be}. Using these open sets as a covering for the compact set $\Lambda$, we extract a finite subcovering  $\bigcup_{i=1}^m U_{x_i}$ of $\Lambda$.

\smallskip

Set $J = \max_{i=1}^m \{j_2(x_i)\}$. 
By construction, 
\begin{equation} \label{e.Bey}
\Lambda \subset 
B_\varepsilon \Big( \bigcup_{i=1}^d \mathcal{F}^{s}(f^{-J + i}(y))\Big), 
\quad
\mbox{for all $y \in \Lambda $}.
\end{equation}

Applying  Inclusion~\eqref{e.Bey} to $y =f^{J}(x)$, and observing that $\varepsilon$ can be taken arbitrarily small, 
we conclude that  $\Lambda \subset\bigcup_{i=1}^{d} \overline{\mathcal{F}^{s}(f^i(x))}$. Then, there is $j_3(x) \in \{1,...,d\}$ such that
$p \in \overline{\mathcal{F}^{s}(f^{j_3(x)}(x))} \mbox{,  which means that  } f^{-j_3(x)}(p) \in \overline{\mathcal{F}^{s}(x)}$. Applying Lemma~\ref{l.restriction} to the periodic point $f^{-j_3(x)}(p)$, and observing that $H(p,f) = H(f^{-j_3(x)}(p), f) = \Lambda$, we obtain that
$\bigcup_{i=1}^d \overline{\mathcal{F}^{s}_{\Lambda}(f^i(x))} = \Lambda.
$ 
As it holds for all $x \in \Lambda$, the set $\Lambda$ is $s$-minimal.
\end{proof}

\begin{proof}[Proof of Corollary~\ref{t.criterion}] 

Since we are in the gereric context, we can assume that $\Lambda$ is a chain recurrent class admitting an invariant extension of the splitting to a compact neighborhood $U$ (see Theorem~ \ref{extension}).  By hypothesis, for every $x \in \Lambda$ there is a point  $p_x \in \overline{\mathcal{O}_f(\mathcal{F}^{s}(x))} \cap \operatorname{Per}_{d^s}(f_{|_{\Lambda}})$. From the invariance of the set $\overline{\mathcal{O}_f(\mathcal{F}^{s}(x))}$ and the $\lambda$-Lemma, we get that
\begin{equation} \label{e.px1}
 \overline{\mathcal{O}_f(\mathcal{F}^s(p_x))} \subset \overline{\mathcal{O}_f(\mathcal{F}^{s}(x))}.
 \end{equation}
Note that $\mathcal{F}^s(p_x) = W^s(p_x)$, so   
\begin{equation} \label{e.px2} 
H(p_x,f) \subset \overline{\mathcal{O}_f(\mathcal{F}^s(p_x))}.
\end{equation}

By item (3) of Theorem~\ref{t.generic}, every two non-disjoints homoclinic classes coincide, so $\Lambda = H(p_x,f)$. Putting together this fact, equations \eqref{e.px1} and \eqref{e.px2}, we obtain 
that $\Lambda \subset \overline{\mathcal{O}_f(\mathcal{F}^{s}(x))}.$

As this holds for every $x \in \Lambda$, we apply Theorem~\ref{t.criterion2} and conclude that $\Lambda$ is $s$-minimal. 
\end{proof}

\medskip

\section{Generic $u$- and $s$-minmal attractors}
 
\medskip

A first step in proving Theorems A and B is to classify the dynamics on certain central invariant curves. According to this classification, we investigate $u$ or $s$-minimality case by case. 

\subsection{Central Curves: Classification of Periodic Points}\ \label{ccpp}

\medskip

Unlike the strong stable and unstable bundles, we can not guarantee the existence of an invariant center lamination 
tangent to the center bundle. Nevertheless, if $\Lambda$ is a partially hyperbolic attractor with one-dimensional center bundle, we guarantee the existence of \emph{invariant central curves} for the hyperbolic periodic points of $\Lambda$ (Proposition \ref{HHU}). A {\emph{central curve}} is a curve $\gamma \subset M$ that is tangent to the (extended) center bundle $E^c$ at every point of $\gamma \cap U$ (see subsection~\ref{ss.extension}). 
\smallskip

Next result is an adaptation\footnote{In \cite{B10}, the partial hyperbolicity is defined over the whole manifold.} of Theorem 2 in \cite{B10} for the context of partially hyperbolic attractors.

\begin{pr}
\label{HHU} Let $f \in \mathcal{R}$ and $\Lambda_f(U)$ be a partially hyperbolic attractor of $f$ with one-dimensional center bundle. Then there exists $K>0$ such that, for every hyperbolic periodic point with period $N \geq K$, there exists an $f^N$-invariant central curve $L(p)$ (i.e., $f^N(L(p))=L(p)$) containing $p$ in its interior.  
\end{pr}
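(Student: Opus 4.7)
The plan is to construct $L(p)$ via a graph-transform / center-manifold argument applied to the fixed point $p$ of $f^N$, exploiting large $N$ to obtain strong normal hyperbolicity of the one-dimensional center direction regardless of the value $c_p := Df^N|_{E^c(p)}$.

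First I would fix, by Theorem~\ref{extension} together with the standing convention at the end of Section~\ref{ss.extension}, a $Df$-invariant extension $E^s\oplus E^c\oplus E^u$ of the partially hyperbolic splitting to the compact neighborhood $U$, with $E^c$ a continuous one-dimensional line field. Fix cone fields $\mathcal{C}^s,\mathcal{C}^c,\mathcal{C}^u$ around these bundles together with a uniform constant $\sigma<1$ for the dominated splitting: for every $x\in U$ with $f(x)\in U$,
\[
\|Df|_{E^s(x)}\|\le\sigma\cdot m(Df|_{E^c(x)}),\qquad \|Df|_{E^c(x)}\|\le\sigma\cdot m(Df|_{E^u(x)}),
\]
where $m(\cdot)$ denotes the conorm.

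Now fix a hyperbolic periodic point $p\in\Lambda_f(U)$ of period $N$. Since $\dim E^c=1$, the derivative $c_p:=Df^N|_{E^c(p)}$ is a nonzero scalar with $|c_p|\ne 1$, and iterating the domination inequalities yields
\[
\|Df^N|_{E^s(p)}\|\le\sigma^N|c_p|,\qquad |c_p|\le\sigma^N\,m(Df^N|_{E^u(p)}).
\]
For $N\ge K$ with $K$ chosen (independently of $p$) sufficiently large in terms of $\sigma$ and $\|Df\|_{C^0}$, these estimates place $p$ in the hypotheses of the Hirsch--Pugh--Shub invariant manifold theorem, viewed as a fixed point of $f^N$. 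I would then run the graph transform on the space of $C^1$ curves through $p$, of a uniform length $\varepsilon_0$, with tangent vectors in $\mathcal{C}^c$: for $N\ge K$ this transform is a contraction in the $C^0$ metric, and its unique fixed point $L_{\mathrm{loc}}(p)$ is a $C^1$ curve of length $\varepsilon_0$ through $p$, locally $f^N$-invariant. That $L_{\mathrm{loc}}(p)$ is tangent to $E^c$ at every point of $L_{\mathrm{loc}}(p)\cap U$ (and not only at $p$) follows from the $Df$-invariance of $E^c$ on $U$: iteration of the graph transform contracts $\mathcal{C}^c$ exponentially onto the invariant line $E^c$, so the tangent vectors along the fixed curve must lie in $E^c$ pointwise.

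Finally I would globalize by setting $L(p):=\bigcup_{n\in\mathbb{Z}}f^{nN}(L_{\mathrm{loc}}(p))$, which is $f^N$-invariant, tangent to $E^c$ at every point of $L(p)\cap U$ (again by $Df$-invariance of $E^c$ on $U$), and contains $p$ in its interior since $L_{\mathrm{loc}}(p)$ already does. The main obstacle is the \emph{uniformity} of $K$ in the periodic point $p$: the contraction rate of the graph transform must depend only on $\sigma$ and the $C^1$-bounds for $Df$ on $U$, not on the individual point $p$ or its eigenvalue $c_p$ (which may be arbitrarily close to $\pm 1$). This uniformity is exactly what the extended splitting on the compact set $U$ provides, and it is where both hypotheses---that $\Lambda_f(U)$ is an attractor (so Theorem~\ref{extension} applies) and that $\dim E^c=1$ (so the center-manifold machinery reduces to a one-dimensional graph transform and the condition $|c_p|\ne 1$ is automatic from the hyperbolicity of $p$)---play an essential role.
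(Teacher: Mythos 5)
Your high-level plan---construct a local central curve at the fixed point $p$ of $f^N$ and then globalize by iteration---matches the skeleton of the paper's argument, but the paper does not actually reprove the local step: it simply cites Theorem 2 of \cite{B10}, observes that the construction there is local and hence remains valid inside the isolating block $U$ endowed with the extended invariant splitting, and then extends the resulting local curve $\gamma(p)$ to an $f^N$-invariant curve $L(p)$ by taking forward and backward $f^N$-iterates of its two halves. Your proposal is thus an attempt to reprove the cited step from scratch, and as written it has a gap.

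The gap is in the graph transform. Applying $f^N$ to short curves through $p$ with tangent vectors in $\mathcal{C}^c$ and measuring $C^0$ distance is \emph{not} a contraction: under $f^N$ the $E^u$-component of such a curve is stretched strictly faster than the $E^c$-component, so the cone $\mathcal{C}^c$ widens---rather than narrows---in the unstable direction, and the $C^0$-difference between two graphs grows there. A one-step forward graph transform of this type converges to a center-\emph{stable} disc (of dimension $d^s+1$), not to a one-dimensional central curve; this is precisely the classical obstruction behind the non-uniqueness of center manifolds. For the same reason the claim that ``iteration of the graph transform contracts $\mathcal{C}^c$ exponentially onto $E^c$'' is false: forward iteration narrows $\mathcal{C}^c$ only in the $E^s$-direction while widening it in the $E^u$-direction, and backward iteration does the opposite. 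A corrected local construction must exploit the hyperbolicity of $p$: when $\operatorname{index}(p)=d^s+1$, work inside $W^s(p)$ (where the $E^u$-direction is absent and the forward graph transform over $E^c$ does contract, by the domination of $E^s$ by $E^c$); when $\operatorname{index}(p)=d^s$, work inside $W^u(p)$ under $f^{-N}$; or else intersect a center-stable manifold with a center-unstable one. Your explanation of the threshold $K$ is also off-target: the domination estimates you write down already hold for $N=1$ with the fixed constant $\sigma<1$, and invoking Hirsch--Pugh--Shub normal hyperbolicity for a single fixed point is vacuous (a point is trivially normally hyperbolic; the relevant tool is the plaque-family/center-manifold machinery). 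The constant $K$ is inherited directly from Theorem 2 of \cite{B10}, and its only role downstream in the paper is to discard, via Kupka--Smale genericity (Remark~\ref{r.finite}), the finitely many short-period orbits that fall outside the reference's hypothesis---not to make a graph transform converge.
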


\begin{proof}[Sketch of the proof] The proof of this result is almost identical to the one in Theorem 2 of \cite{B10}. It involves only local arguments, which are still valid inside the isolating block $U$ of the attractor $\Lambda_f(U)$. Following their arguments, for each periodic point $p \in \Lambda_f(U)$ with period $N$ sufficiently big, we obtain a local central curve $\gamma(p)$ inside $U$ with the following property: $\gamma(p)\backslash{p}$ is the union of two connected components $\gamma^+(p)$ and $\gamma^-(p)$, both invariant either by $f^N$ or $f^{-N}$. By taking forward and backward iterates of these components, we can extend $\gamma(p)$ to a curve $L(p)$ such that $f^N(L(p))=L(p)$.  In the process, we may assume that $L(p)$ do not end in a periodic point inside $U$ by extending $L(p)$ if necessary. 
\end{proof}


\begin{re} \label{r.finite} \em{ Recall that $\mathcal{R}$ consists of Kupka-Smale diffeomorphisms, so the set of periodic points $p$  with $\pi(p) \leq K$ is a finite set. } 
\end{re}

Note that if $\pi(p)=d$, then the period of any periodic point in the curve  $L(p)$ is a divisor of $2d$. Hence, there are only finitely many periodic points in $L(p)$.

In general, there is not a unique invariant central curve passing through $p$. We consider a choice of these invariant curves that is coherent with the dynamics on $\Lambda_f(U)$, that is, satisfying $ L(f(p))=f(L(p)) $.

\smallskip

 We denote  by $L_{U}(p)$ the connected component 
of $L(p) \cap U$ containing $p$ and by $\Gamma_p \subset L_U(p)$  the smallest compact and connected subset of $L_U(p)$ that contains all periodic points 
and all periodic closed curves of $L_U(p)$ (it may happens that $\Gamma_p = \{p\}$). Given $\varepsilon>0$, we denote by $L_{\varepsilon }(p)$ the connected component of $L_U(p) \cap B_{\varepsilon}(p)$ that contains $p$.  There are three possibilities 
for the boundary $\partial \Gamma_p$ of $\Gamma_p$ relative to the set $L_U(p)$: 
either it is empty,  a unitary set, or a two points set. If $\partial \Gamma_p = \emptyset$, then $\Gamma_p$ is a closed curve. When $\partial \Gamma_p \not= \emptyset$, we say that $\partial \Gamma_p$ are the \emph{extremal points} of $\Gamma_p$. 
A periodic point $q$ is called \emph{extremal} if there is some $p \in \Lambda_f(U)$ such that $q\in \partial \Gamma_p$.

\begin{re}\label{r.length} \em{Since $U$ is a neighborhood of the compact set $\Lambda_f(U)$, the length of $L_U(p)$ is uniformly bounded from below, and the point $p$ is uniformly far from the edges of $L_U(p)$, if any. Hence, there is $\delta > 0$ such that, for every periodic point $p \in \Lambda$, the set $L_U(p)$ contains a disk centered at $p$ of length bigger than $\delta$.}
\end{re}

Now we classify the periodic points of $f$ in $U$ as follows:
$$
 P_1 \cup P_2 \cup P_3 \cup P_4 =\{p\in \operatorname{Per}(f)\cap U, \quad 
\pi(p) \geq K \}
$$ 
where
 \begin{itemize}
	\item $p\in P_1$ if the extremal points of $\Gamma_p$ are attracting  in the central direction,  
	\item $p\in P_2$ if the extremal points of $\Gamma_p$ are repelling in the central direction,  
	\item $p\in P_3$ if there are one attracting and one repelling extremal points of $\Gamma_p$, and
	\item $p\in P_4$ if $\Gamma_p$ is a closed curve.
   
\end{itemize} 

\begin{re}\label{r.union} \em{

Since the dynamics in $L(p)$ is Morse-Smale, there are finitely many
periodic points $a_1, \dots,a_{m_p}$ such that
$$
\Gamma_p  \subset \bigcup_{i=1}^{m_p} W^s(a_{i},f) \quad  \mbox{and}  \quad \Gamma_p  \subset \bigcup_{i=1}^{m_p} W^u(a_{i},f), $$}
\end{re}
\noindent where the stable and unstable manifolds $W^s$ and $W^u$ refer to the dynamics restricted to $L(p)$. Observe that  $W^s(a_i,f) = \{a_i\}$ if $\operatorname{index}(a_i) = d^s$, and  $W^u(a_i,f) = \{a_i\}$ if $\operatorname{index}(a_i)~=~d^s+~1$.

\begin{re}\label{cl.P2P4} \em{
If $p \in P_1 \cup P_4$, then $\partial \Gamma_p$ is either the empty set or consists of (at most two) points of index $d^s+1$. Hence
$$
L_U(p) \subset \Gamma_p \cup W^s (\partial \Gamma_p).
$$

Similarly, if $p \in P_2 \cup P_4$, then $\partial \Gamma_p$ is either the empty set or consists of (at most two) points of index $d^s$. Hence
$$
L_U(p) \subset \Gamma_p \cup W^u (\partial \Gamma_p) \subset \Lambda.
$$}
\end{re}

\begin{lema} \label{lrg} 
 For every $p \in \operatorname{Per}(f) \cap U$ with $\pi(p) >K$, the following holds:
\begin{enumerate}
\item $\Gamma_p \subset \Lambda_f(U)$, 
\item $f(\Gamma_p) = \Gamma_{f(p)}$,
\item $f(P_i) = P_i$.
\end{enumerate}
\end{lema}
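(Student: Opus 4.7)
Set $N:=\pi(p)>K$ and recall that every periodic point and every periodic closed curve of $L(p)$ has period dividing $2N$. I will establish the three items in the order $(1)$, $(2)$, $(3)$.

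For $(1)$, I first claim $f^{2N}(\Gamma_p)=\Gamma_p$. The set $f^{2N}(\Gamma_p)$ is compact and connected, is contained in $L(p)\cap U$ (using $f(\overline U)\subset U$), contains $p$, and contains every periodic point and every periodic closed curve of $L_U(p)$, because $f^{2N}$ fixes each periodic point of $L(p)$ and setwise preserves each periodic closed curve. Minimality of $\Gamma_p$ forces $\Gamma_p\subset f^{2N}(\Gamma_p)$, and since $f^{2N}$ is a homeomorphism of $L(p)$ fixing the extremal periodic points of $\Gamma_p$, it maps $\Gamma_p$ onto itself. Now, for $x\in\Gamma_p$ and $j\in\mathbb{Z}$, write $j=2Nk+r$ with $0\le r<2N$: then $f^{2Nk}(x)\in\Gamma_p\subset U$ and $r\ge 0$, so iterating $f(\overline U)\subset U$ gives $f^j(x)=f^r(f^{2Nk}(x))\in U$. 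Hence $x\in\bigcap_{j\in\mathbb{Z}}f^j(U)=\Lambda_f(U)$.

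For $(2)$, the coherence $L(f(p))=f(L(p))$ makes $f$ a homeomorphism $L(p)\to L(f(p))$ conjugating the $f^{2N}$-dynamics and carrying periodic elements (points and closed curves) of $L(p)$ bijectively to those of $L(f(p))$. Since $f(\overline U)\subset U$, we have $f(L_U(p))\subset L_U(f(p))$, so $f(\Gamma_p)$ is a compact connected subset of $L_U(f(p))$ containing the $f$-images of all periodic elements of $L_U(p)$. The technical step is to show that these $f$-images are precisely the periodic elements of $L_U(f(p))$; granting this, $f(\Gamma_p)$ contains all periodic elements of $L_U(f(p))$, so minimality of $\Gamma_{f(p)}$ gives $\Gamma_{f(p)}\subset f(\Gamma_p)$, with the symmetric argument at $f(p)$ yielding the reverse inclusion.

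For $(3)$, item $(2)$ gives a homeomorphism $f|_{\Gamma_p}\colon\Gamma_p\to\Gamma_{f(p)}$ sending $\partial\Gamma_p$ onto $\partial\Gamma_{f(p)}$ and preserving the closed-curve structure. Since $f$ conjugates the central dynamics near an extremal periodic point $q\in\partial\Gamma_p$ to that near $f(q)\in\partial\Gamma_{f(p)}$, attracting (resp.\ repelling) extremal points are mapped to attracting (resp.\ repelling) ones, so the classification is $f$-equivariant and $f(P_i)=P_i$. The main obstacle is the bijection claim in $(2)$: because $f(\overline U)\subsetneq U$, the component $L_U(f(p))$ could a priori contain periodic points whose $f^{-1}$-image lies outside $L_U(p)$, which would make $\Gamma_{f(p)}$ strictly larger than $f(\Gamma_p)$; excluding this requires exploiting the attractor property, the coherent choice of central curves, and the Morse--Smale structure of $f^{2N}$ on $L(p)$.
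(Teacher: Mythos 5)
Your argument for item (1) is correct but follows a different route than the paper's. You establish $f^{2N}(\Gamma_p)=\Gamma_p$ and then use the trapping property $f(\overline U)\subset U$ to place the orbit of each $x\in\Gamma_p$ inside $U$. The paper's proof is more direct: by Remark~\ref{r.union}, $\Gamma_p$ is covered by the (restricted-to-$L(p)$) unstable manifolds of its finitely many periodic points, each of which is contained in the corresponding ambient unstable manifold $W^u(a_i,f)$, and an attractor contains the unstable sets of its points. This gives $\Gamma_p\subset\bigcup_i W^u(a_i,f)\subset\Lambda_f(U)$ in one step, with no need to discuss invariance of $\Gamma_p$ or do a division algorithm on iterates. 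Your route is longer, but it is sound.

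For items (2) and (3), you are right that the paper is terse and that the nontrivial point is whether $f$ induces a bijection between the periodic elements of $L_U(p)$ and those of $L_U(f(p))$; you flag this as an unresolved obstacle. However, the gap closes immediately once you invoke item (1), which you have already proved. Apply (1) to the point $f(p)$: $\Gamma_{f(p)}\subset\Lambda_f(U)$. Hence $f^{-1}(\Gamma_{f(p)})\subset f^{-1}(\Lambda_f(U))=\Lambda_f(U)\subset U$; it is also compact, connected, lies in $L(p)$, and contains $p$, so $f^{-1}(\Gamma_{f(p)})\subset L_U(p)$. In particular every periodic point (and closed curve) of $L_U(f(p))$ has its $f^{-1}$-image inside $L_U(p)$, which together with $f(L_U(p))\subset L_U(f(p))$ gives the desired bijection, and then $f(\Gamma_p)=\Gamma_{f(p)}$ follows by minimality from both sides exactly as you sketch. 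Item (3) then follows as you say, since $f$ carries attracting (resp. repelling, resp. closed-curve) extremal data of $\Gamma_p$ to that of $\Gamma_{f(p)}$. In short: your concern is legitimate, but you left on the table the fact that (1) itself supplies the missing ingredient.
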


\begin{proof}
By definition, the periodic points of
$\Gamma_p$ belong to $U$. By Remmark~\ref{r.union}  
$$
\Gamma_p \subset \bigcup_{q\in \Gamma_p} W^u(q) \subset \Lambda_f(U).
$$
The last inclusion follows from the fact that $\Lambda_f(U)$ is an attractor, so it contain its unstable sets.  This proves item (1).

\smallskip

Items (2) and (3) follow from the coherent choice of the central curves. Observe that $f$ sends 
closed curves to closed curves 
and extremal points of $\Gamma_p$ to extremal 
points of $\Gamma_{f(p)}$. 
\end{proof}

\begin{lema} \label{l.87} $\overline{P_i} = \Lambda_f(U)$ for some $i \in \{1,...,4\}$.
\end{lema}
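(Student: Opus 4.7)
The plan is a straightforward Baire category plus transitivity argument, built on top of the density of periodic points in the attractor.

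First, I would show that the union $P_1 \cup P_2 \cup P_3 \cup P_4$ is dense in $\Lambda_f(U)$. Since $f \in \mathcal{R} \subset \mathcal{R}_0 \cap \mathcal{R}_1$, the (generically) transitive attractor $\Lambda_f(U)$ coincides with a homoclinic class (Remark \ref{5gta}), so periodic points are dense in it by Remark \ref{r.HC.properties} (alternatively by item (2) of Theorem \ref{t.generic}). By Remark \ref{r.finite}, only finitely many of these periodic points have period at most $K$, so the periodic points with period strictly greater than $K$ are still dense in $\Lambda_f(U)$. By the definition of the classes, every such point lies in $P_1 \cup P_2 \cup P_3 \cup P_4$, hence
\[
\Lambda_f(U) \;=\; \overline{P_1} \cup \overline{P_2} \cup \overline{P_3} \cup \overline{P_4}.
\]

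Next, I would apply the Baire category theorem to the compact (hence Baire) metric space $\Lambda_f(U)$. The identity above expresses $\Lambda_f(U)$ as a finite union of closed subsets, so at least one of them, say $\overline{P_i}$, has nonempty interior relative to $\Lambda_f(U)$; pick a nonempty relatively open set $V \subset \overline{P_i}$.

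Finally, I would promote ``nonempty interior'' to ``everything'' using transitivity and invariance. By item (3) of Lemma \ref{lrg}, $f(P_i) = P_i$, so by continuity of $f$ one has $f(\overline{P_i}) = \overline{P_i}$, and in particular $f^n(V) \subset \overline{P_i}$ for every $n \in \mathbb{Z}$. Given any nonempty relatively open subset $W$ of $\Lambda_f(U)$, transitivity of $\Lambda_f(U)$ provides some $n$ with $f^n(V) \cap W \neq \emptyset$, so $\overline{P_i} \cap W \neq \emptyset$. Thus $\overline{P_i}$ is dense in $\Lambda_f(U)$, and being closed, $\overline{P_i} = \Lambda_f(U)$.

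The only mild obstacle is making sure all the ingredients really are available in this setting: density of periodic points (homoclinic-class structure of the attractor), finiteness of the ``short-period'' exceptions (Kupka-Smale), and $f$-invariance of each $P_i$ (from the coherent choice of central curves encoded in Lemma \ref{lrg}). Once these are in place, the Baire plus transitivity argument is immediate and does not depend on which $i \in \{1,\dots,4\}$ is selected.
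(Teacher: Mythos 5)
Your proof is correct, but it reaches the conclusion by a genuinely different (and slightly longer) route than the paper. You first invoke the Baire category theorem on the compact metric space $\Lambda_f(U)$ to find some $\overline{P_i}$ with nonempty relative interior, and then use the topological formulation of transitivity (any two nonempty relative open sets are connected by some iterate) together with the $f$-invariance of $\overline{P_i}$ to upgrade to $\overline{P_i} = \Lambda_f(U)$. The paper skips Baire entirely: it picks a single transitive point $x \in \Lambda_f(U)$ (a point with dense forward orbit), notes that $x$ lies in some $\overline{P_i}$, observes via item (3) of Lemma~\ref{lrg} that $\overline{P_i}$ is a closed $f$-invariant set, so the entire orbit of $x$ stays inside $\overline{P_i}$, and concludes directly from density of that orbit that $\overline{P_i} = \Lambda_f(U)$. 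Both arguments rest on the same preparatory facts — density of periodic points of period $>K$ (Theorem~\ref{t.generic}(2) and Remark~\ref{r.finite}) and invariance of the classes $P_i$ — and both exploit the same definition of transitivity; the paper's version is simply the more economical instantiation (one transitive point versus Baire plus the open-set-intersection characterization). Your argument is a perfectly acceptable, if marginally heavier, alternative.
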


\begin{proof}
By item (2) of Theorem~\ref{t.generic}, the periodic points of $f$ are dense in $\Lambda_f(U)$. By Remark~\ref{r.finite}, the periodic points of $\Lambda_f(U)$ with period less than $K$ is a finite set. Since $\Lambda_f(U)$ is infinite and transitive, it has no isolated periodic orbits. Hence the set of periodic points of $\Lambda_f(U)$ with period bigger than $K$ is also dense in $\Lambda_f(U)$. In other words, $\Lambda_f(U) = \overline{P_1} \cup \overline{P_2} \cup \overline{P_3} \cup \overline{P_4}$. 

Let $x \in \Lambda_f(U)$ be a point with a dense orbit. Clearly,  $x \in \overline{P_i}$ for some $i \in \{1,...,4\}$. From item (3) in Lemma~\ref{lrg},
the whole orbit of $x$ lies in $\overline{P_i}$, which implies that $\overline{P_i} = \Lambda_f(U)$. 
\end{proof}

\bigskip

\subsection{First step toward Theorem A}\
\label{proofTA}
\medskip

Recall the notation $\mathrm{GTPHA}_1(U)$ as the subset of $\operatorname{Diff}^1(M)$ of diffeomorphisms $f$ such that $\Lambda_f(U)$ is a robustly non-hyperbolic, partially hyperbolic set with one-dimensional center bundle that is generically transitive. Given $f_0 \in \mathcal{R}\cap \mathrm{GTPHA}_1(U)$, let $\mathcal{U}_0$ be a compatible neighborhood of $f_0$, and $\mathcal{G}_0$ be the residual subset of $\mathcal{U}_0$ given by Lemma~\ref{l.per}.

Fixed $f \in \mathcal{G}_0 \cap \mathcal{R}$ and
 $\Lambda=\Lambda_f(U)$, we verify that $\Lambda$ has one minimal lamination.
We split these verification into three cases, 
according to which sets $P_i$'s  are dense
in $\Lambda$ (see Lemma~\ref{l.87}). 

\begin{pr}\label{p.cases} $\,$
\begin{enumerate}[(a)]
 \item 
If the set $P_1 \cup P_4$ is dense in $\Lambda$,
then $\Lambda$ is
 $u$-minimal.
\item 
If the set $P_2 \cup P_4$ is dense in $\Lambda$,
then $\Lambda$ is  $s$-minimal.
\item 
If the set $P_3 \cup P_4$ is dense in $\Lambda$,
then $\Lambda$ is  $u$-minimal.
\end{enumerate}
\end{pr}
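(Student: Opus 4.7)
The three items share a common strategy. My plan is to reduce each to the minimality criterion Corollary~\ref{t.criterion} (for item (b)) or its unstable dual from Remark~\ref{R0} (for items (a) and (c)). Under the standing hypotheses $f\in\mathcal{G}_0\cap\mathcal{R}$ and $\Lambda=\Lambda_f(U)$ a generically transitive partially hyperbolic attractor, Remark~\ref{5gta} gives that $\Lambda$ equals the homoclinic class of each of its periodic saddles, Proposition~\ref{Ab}(2) supplies periodic points of both indices, and Remark~\ref{otimo} provides the extended invariant splitting on $U$. Hence, to prove item (b) it suffices to show that for every $x\in\Lambda$ the closure $\overline{\mathcal{O}_f(\mathcal{F}^s(x))}$ contains a periodic point of index $d^s$; items (a) and (c) dually require a periodic point of index $d^s+1$ in $\overline{\mathcal{O}_f(\mathcal{F}^u(x))}$.

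Given $x\in\Lambda$, by the density hypothesis of the respective item I will pick $p\in P_i\cup P_4$ arbitrarily close to $x$ and, using Remark~\ref{cl.P2P4}, single out a periodic target $q\in\Gamma_p\subset\Lambda$ on the central curve $L(p)$ of the desired index: in (b), either an extremal point of $\Gamma_p$ (if $p\in P_2$) or a central-repelling periodic point of the Morse--Smale dynamics on the closed loop (if $p\in P_4$), both of index $d^s$, with the entire $L_U(p)$ contained in $\Lambda$; in (a) and (c), an attracting extremal of $\Gamma_p$ of index $d^s+1$, with $L_U(p)\setminus\Gamma_p$ contained in the extended ambient stable manifold of $q$. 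The dynamical feature I will exploit is that $q$ attracts along $L(p)$ under $f^{-N_p}$ (in (b)) or under $f^{N_p}$ (in (a), (c)), where $N_p=\pi(p)$; hence points in the appropriate monotonic arc of $q$ on $L(p)\cap\Lambda$ can be iterated to converge to $q$.

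The core of the argument is the following accessibility step. Pick $y\in L(p)\cap\Lambda$ in the appropriate arc of $q$. By Remark~\ref{r.continuity} the local strong leaves $\mathcal{F}^{\sigma}_r(f^{\mp kN_p}(y))$ converge in $C^1$ to $\mathcal{F}^{\sigma}_r(q)$, so $q\in\overline{\mathcal{O}_f(\mathcal{F}^{\sigma}(y))}$. To transfer this accumulation to $\mathcal{F}^{\sigma}(x)$, my plan is a local product argument at $p$: by transversality of the strong $\mathcal{F}^{\sigma}$-lamination to the central direction, iterates of $\mathcal{F}^{\sigma}(x)$ meet a central plaque through $p$, providing a point from which one can run the same attraction argument on the iterate of the leaf. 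Lemma~\ref{l.per} ensures that the accumulation indeed lands on $q$ rather than on the strong disk of another periodic point of wrong index. With $q\in\overline{\mathcal{O}_f(\mathcal{F}^{\sigma}(x))}$ verified for every $x$, Corollary~\ref{t.criterion} (or its unstable dual) gives the claimed minimality.

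The main obstacle will be this accessibility transfer. In case (b) the whole $L_U(p)$ lies in $\Lambda$, making the local product step comparatively direct. In cases (a) and (c) only $\Gamma_p$ is known to lie in $\Lambda$, so I will need to rely on the extended invariant splitting of Theorem~\ref{extension} together with the tangency statement Proposition~\ref{ttes} to make sense of strong leaves of points on $L_U(p)$ that lie outside $\Lambda$, and on Lemma~\ref{l.per} to prevent spurious crossings of local $\varepsilon$-disks of distinct periodic points from routing the accumulation to a periodic point of the wrong index.
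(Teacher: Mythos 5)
Your overall structure matches the paper's: reduce each item to the minimality criterion (Corollary~\ref{t.criterion} or its unstable dual), and for every $x\in\Lambda$ use density of $P_i\cup P_4$ together with the central curves $L(p)$ to produce a periodic point of the right index in $\overline{\mathcal{O}_f(\mathcal{F}^{\sigma}(x))}$. However, the ``accessibility transfer'' at the heart of your sketch has a dimensional gap, and it is precisely what the paper's key construction is designed to fix. You write that ``iterates of $\mathcal{F}^{\sigma}(x)$ meet a central plaque through $p$.'' But a strong unstable leaf has dimension $d^u$, a central plaque has dimension $d^c=1$, and $d^u+1<\dim M$ because $d^s\geq 1$; a strong stable leaf against a central plaque fares no better, since $d^s+1<\dim M$ as $d^u\geq 1$. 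So a strong leaf cannot be expected to meet a central plaque, and a local product argument on its own does not deliver the intersection point you need. The paper circumvents this by building codimension-one (resp.\ complementary-dimensional) topological disks: in cases (a) and (c), $\Delta(x,\varepsilon)=\bigcup_{z\in\mathcal{F}^u_\varepsilon(x)}\mathcal{F}^s_\varepsilon(z)$, which has dimension $d^s+d^u=\dim M -1$ and is therefore transversal to the one-dimensional curve $L_U(p)$ (Remark~\ref{r.rr}); and in case (b), $\nabla(p,\varepsilon)=\bigcup_{y\in L_{\varepsilon,U}(p)}\mathcal{F}^u_\varepsilon(y)$, of dimension $d^u+1$, which is transversal to the $d^s$-dimensional disk $\mathcal{F}^s_\varepsilon(x)$. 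The intersection point $z_p$ obtained this way is then connected to a point $w_p\in\mathcal{F}^\sigma_\varepsilon(x)$ by a strong disk, and that connection is what allows the accumulation on a periodic point of $\Gamma_p$ to be transported to the orbit of $\mathcal{F}^\sigma(x)$ (this is Lemma~\ref{l.a}). Your proposal never sets up these saturated disks, so the transfer step as written does not go through.

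Two further points you should address. First, after locating $z_p$, it is not automatic that $z_p$ lies in the invariant manifold of a periodic point of $\Gamma_p$ with the desired index: $z_p$ could itself be a periodic point of the wrong index. The paper uses Lemma~\ref{l.per} to guarantee that the only periodic point in the relevant saturated disk is its center, and then re-runs the argument once from $z_p$ (or from a nearby $q$). Your mention of Lemma~\ref{l.per} gestures at this, but the iteration of the argument is an essential step, not an optional safeguard. Second, case (c) is genuinely more delicate than your outline suggests: since $\Gamma_p$ has one attracting and one repelling extremal point, the intersection $z_p$ can land in $W^u(p)$ rather than $W^s(p)$, and then no direct application of Lemma~\ref{l.a} is available. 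The paper handles this by pulling back the saturated disk by $f^{-2d}$ ($d=\pi(p)$), choosing a second periodic point $q\in P_3$ whose central curve crosses both $\Delta(x,\varepsilon)$ and $f^{-2d}(\Delta(x,\varepsilon))$, and then pushing forward by $f^{2d}$ to land on the attracting side; it also needs a preliminary reduction (subcase (iii)) to ensure $\Delta(x,\varepsilon)$ contains no periodic point. Your single sentence ``an attracting extremal of $\Gamma_p$ of index $d^s+1$'' glosses over all of this and, as written, would fail exactly when $z_p$ lands on the repelling side.
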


Fix $x \in \Lambda_f(U)$. Since $\Lambda_f(U)$ is an attractor, for every $\varepsilon > 0$, the disk $\mathcal{F}^u_{\varepsilon}(x)$ is a subset of $\Lambda$. Hence every poin $z \in \mathcal{F}^u_{\varepsilon}(x)$ has a local stable disk $\mathcal{F}^s_{\varepsilon}(z)$, and we define the topological disk of codimension one
\begin{equation}\label{e.Delta}
\Delta(x, \varepsilon)= \displaystyle 
\bigcup_{z \in \mathcal{F}_{\varepsilon}^{u}(x)} \mathcal{F}_{\varepsilon}^{s}(z).
\end{equation}

\begin{re} \label{r.rr} \em{By Remark \ref{r.length}, for any periodic point $p$, the curve $L_U(p)$  contains a disk of length $\delta$ centered at $p$ ($\delta$ does not depend on $p$). Thus,  if $p$ is sufficiently close to $x$, then  $L_U(p)$ meets topologically transversely the disk $\Delta(x, \varepsilon)$ at some point $z_p =z_p(x)$~(see Figure \ref{ca})}. 
\end{re}

\begin{lema} \label{l.a} If $z_p$ lies in the stable manifold of some periodic point $\tilde{p} \in \Gamma_p$, then $\tilde{p} \in \overline{\mathcal{O}_f(\mathcal{F}^u(x))}$. 
\end{lema}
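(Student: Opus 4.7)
The plan is to unpack the definition of $z_p$ and exploit the asymptotic behaviour of the strong stable leaf together with convergence to $\tilde p$ along the stable manifold.

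First I would observe that, by the construction in \eqref{e.Delta}, there exists a point $z \in \mathcal{F}_\varepsilon^u(x) \subset \mathcal{F}^u(x)$ such that $z_p \in \mathcal{F}_\varepsilon^s(z)$. Two consequences will matter: (i) $z$ and $z_p$ lie on the same strong stable leaf, so $d(f^n(z), f^n(z_p)) \to 0$ as $n \to \infty$; and (ii) the forward iterates of $z$ are contained in $\mathcal{O}_f^+(\mathcal{F}^u(x))$, since $\mathcal{F}^u$ is $f$-invariant.

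Next I would use the hypothesis $z_p \in W^s(\tilde p)$. Let $d = \pi(\tilde p)$. By definition of the stable manifold, $f^{kd}(z_p) \to \tilde p$ as $k \to \infty$. Combined with (i), this gives $f^{kd}(z) \to \tilde p$ as well. By (ii), each $f^{kd}(z)$ belongs to $\mathcal{O}_f(\mathcal{F}^u(x))$, so $\tilde p$ is a limit point of this orbit and therefore $\tilde p \in \overline{\mathcal{O}_f(\mathcal{F}^u(x))}$, which is exactly the conclusion.

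There is essentially no obstacle here beyond being careful with the two indices that $\tilde p$ may have. If $\operatorname{index}(\tilde p)=d^s$, then locally $W^s(\tilde p)=\mathcal{F}^s(\tilde p)$, so $z_p$ already lies on the strong stable leaf through $\tilde p$; the preceding argument applies verbatim. If $\operatorname{index}(\tilde p)=d^s+1$, then $W^s(\tilde p)$ is strictly larger than $\mathcal{F}^s(\tilde p)$, but the only property we actually invoke is $f^{kd}(z_p)\to \tilde p$, which holds by definition of $W^s(\tilde p)$ in either case. Hence the argument is uniform and the lemma follows.
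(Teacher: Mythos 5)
Your proof is correct and follows essentially the same route as the paper: you identify a point $z\in\mathcal{F}^u_\varepsilon(x)$ (the paper calls it $w_p$) with $z_p\in\mathcal{F}^s_\varepsilon(z)$, use forward asymptoticity along the strong stable leaf together with $z_p\in W^s(\tilde p)$, and conclude by the triangle inequality that the forward orbit of $\mathcal{F}^u(x)$ accumulates at $\tilde p$. The closing remark distinguishing the two possible indices of $\tilde p$ is harmless but unnecessary, as you yourself observe; the argument is uniform.
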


\begin{proof}

By the definition of $\Delta(x, \varepsilon)$ and $z_p$, there is a point $w_p \in \mathcal{F}^{u}_{\varepsilon}(x)$ such that 
$z_p \in \mathcal{F}^{s}_{\varepsilon}(w_p)$. Then, considering a Rimannian metric $d(\cdot,\cdot)$ in $U$,  
\begin{equation} \label{e.zpwp}
\lim_{n \to \infty} d(f^{n}(z_p),f^{n}(w_p)) =0.
\end{equation} 

By hypothesis, $z_p \in W^s(\tilde{p}, f)$, so we get that
\begin{equation} \label{e.zpp}
\lim_{n \to \infty} d(f^{n}(z_p),f^{n}(\tilde{p})) =0.
\end{equation} 
Combining \eqref{e.zpwp}  and \eqref{e.zpp}, we obtain  
$$\lim_{n \to \infty} d(f^{n}(\tilde{p}),f^{n}(w_p)) \to 0.$$
As $w_p \in \mathcal{F}^u(x)$, this implies that the orbit of $\mathcal{F}^{u}(x)$ 
accumulates at $\tilde{p}$. \end{proof}

 Now we start to prove Proposition~\ref{p.cases}.
\begin{proof}[Proof of (a)]

\smallskip

As $f \in \mathcal{R}$, $\Lambda$ is a homoclinic class (Remark~\ref{5gta})
. By Corollary~\ref{t.criterion} and Remark~\ref{R0}, to prove the $u$-minimality of $\Lambda$, 
 it suffices to see that, for every $x \in \Lambda$, it holds that
\begin{equation}
 \label{e.case1}
\overline{\mathcal{O}(\mathcal{F}^{u}(x))} \cap 
\operatorname{Per}_{d^s+1}(f_{|_{\Lambda}}) \not= \emptyset.   
\end{equation}

Fix $x \in \Lambda$. Since $P_1 \cup P_4$ is dense in $\Lambda$, we can take
 $p\in P_1\cup P_4$ sufficiently close to $x$ so that, as in Remark~\ref{r.rr}, there is a transverse intersection point $z_p$ between $L_U(p)$ and $\Delta(x, \varepsilon)$.

By Remarks \ref{r.union} and \ref{cl.P2P4}, the point $z_p$ either lies in a stable manifold of some periodic point $\tilde{p} \in \Gamma_p$ of index $d^s+1$, or $z_p$ is a hyperbolic periodic point of index $d^s$.

\smallskip

If the first possibility holds, then Lemma~\ref{l.a} implies Equation~\eqref{e.case1} and we are done.
\begin{figure}[hnt!] \centering
\includegraphics[scale=0.8]{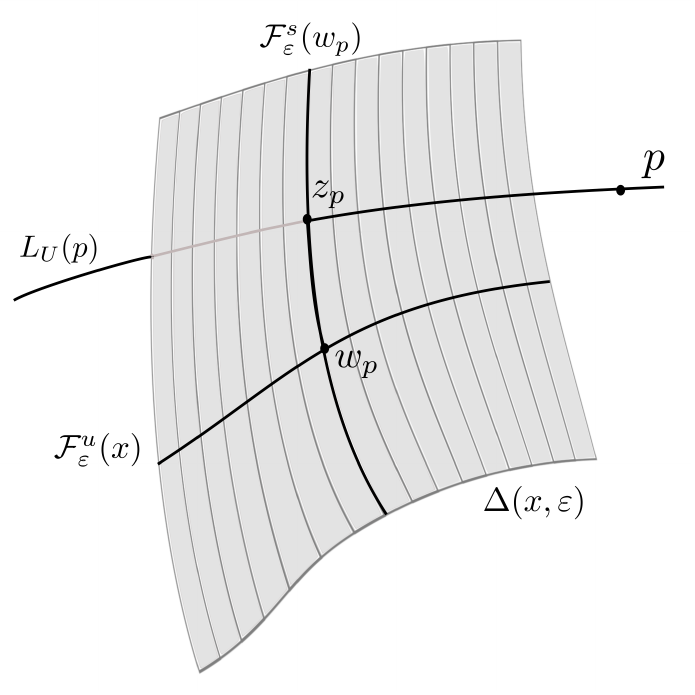}
\caption{Case (a)}
\label{ca}
\end{figure}

Let us suppose then that $z_p$ is a hyperbolic periodic point of index $d^s$. From Equation~\eqref{e.zpwp}, the orbit of $\mathcal{F}^u(x)$ accumulates at the orbit of $\mathcal{F}^u(z_p)$.  Thus, to get Equation~\eqref{e.case1} it is enough to show that 
\begin{equation} \label{e.deltazp}
\overline{\mathcal{O}(\mathcal{F}^{u}(z_p))} \cap 
\operatorname{Per}_{d^s+1}(f_{|_{\Lambda}}) \not= \emptyset.
\end{equation}

\smallskip

Consider the topological manifold $\Delta(z_p, \varepsilon)$. For every $q \in P_1 \cup P_4$ sufficiently close to $z_p$, the curve $L_U(q)$ meets topologically transversely $\Delta(z_p, \varepsilon)$ at some point $z_q$. By Lemma~\ref{l.per}, $z_p$ is the only periodic point in $\Delta(z_p, \varepsilon)$, and by Remark~\ref{r.union}, $z_q$ belongs to the stable manifold of some periodic point in $L_U(q)$ with index $d^s+1$. Now Lemma~ \ref{l.a} implies Equation~\eqref{e.deltazp} and, consequently, Equation~\eqref{e.case1}.  This end the proof of Case (a).

\bigskip

\noindent
\textit{Proof of (b).} Note that this case is not as symmetrical to the Case (a) as it seems. We cannot saturate a strong stable disk with strong unstable leaves, since not all of the points in the strong stable disk belong to $\Lambda$. Instead,  for each point in $p \in P_2 \cup P_4$, we introduce the  topological disk 
\begin{equation} \label{e.nabla}
 \nabla(p,\varepsilon) = \bigcup_{y \in L_{\varepsilon , U}(p)} \mathcal{F}^{u}_\varepsilon(y) \quad \rm{(see Figure~\ref{cb})}.
\end{equation}

Recall that in this case the curve
$L_{\varepsilon , U}(p)$ is contained in $\Lambda$ (see Remark~\ref{cl.P2P4}). Thus, for every $y \in L_{\varepsilon, U}(p)$ there exists $\mathcal{F}^u(y)$, so $\nabla(p,\varepsilon)$ is well defined. 

\smallskip

By Corollary~\ref{t.criterion}, to prove the $s$-minimality of $\Lambda$ it is enough to see that, for every $x \in \Lambda$, it holds that
\begin{equation}
 \label{e.case2}
\overline{\mathcal{O}(\mathcal{F}^{s}(x))} \cap 
\operatorname{Per}_{d^s}(f_{|_{\Lambda}}) \not= \emptyset.
\end{equation}

The curve $L_U(p)$ contains a disk centered at $p$ with length $\delta$ (Remark~\ref{r.length}). Hence, given any $x \in \Lambda$, if $p \in P_2 \cap P_4$ 
is sufficiently close to $x$, then $\mathcal{F}^{s}_\varepsilon(x)$ 
intersects (topologically transversely) $\nabla (p, \varepsilon) $ at some point $w_p$ (see Figure \ref{cb}).  
By the definition of $\nabla (p,\varepsilon)$, there is $z_p\in L_{\varepsilon, U}(p)$ such that $w_p\in \mathcal{F}^{u}_\varepsilon(z_p)$, which means that
\begin{equation} \label{e.zpwp2}
\lim_{n \to \infty} d(f^{-n}(z_p),f^{-n}(w_p)) =0.
\end{equation} 

By Remarks \ref{r.union} and \ref{cl.P2P4}, either
$z_p\in W^u(\tilde p)$ for some periodic point  $\tilde{p} \in \Gamma_p \cap \operatorname{Per}_{d^s}(f_{|_{\Lambda}})$, or $z_p \in \operatorname{Per}_{d^s+1}(f_{|_{\Lambda}})$.
In the first situation, we get that 
\begin{equation} \label{e.zpp2}
\lim_{n \to \infty} d(f^{-n}(z_p),f^{-n}(\tilde{p})) =0 .
\end{equation}
Combining Equations \eqref{e.zpwp2}  and \eqref{e.zpp2} it gives that $$\lim_{n \to \infty} d(f^{-n}(\tilde{p}),f^{-n}(w_p)) \to 0 ,$$
which implies Equation~\eqref{e.case2}, as $w_p \in \mathcal{F}^s(x)$.

\smallskip 

 \begin{figure}[hnt!] \centering
\includegraphics[scale=0.35]{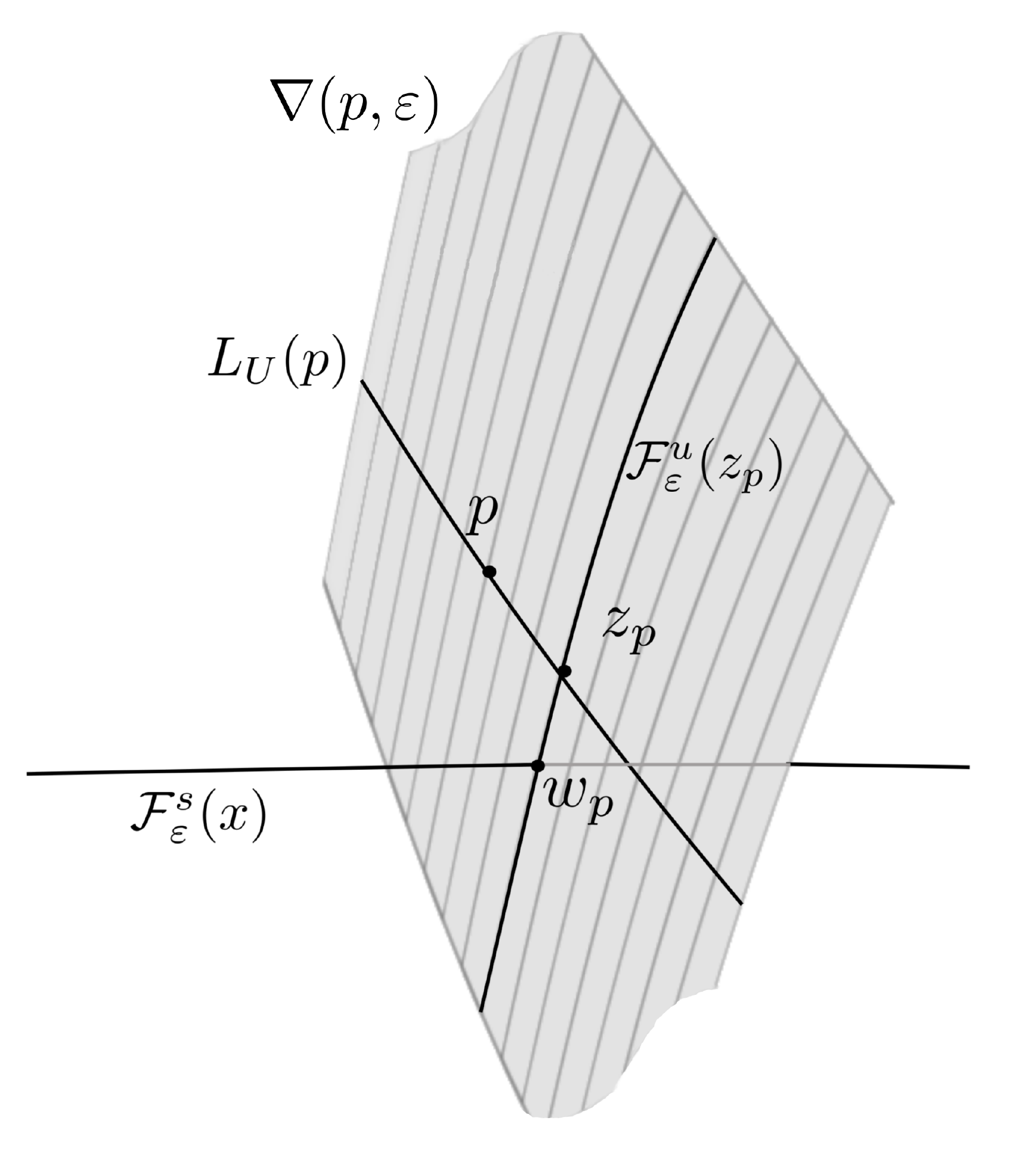}
\caption{Case (b)}
\label{cb}
\end{figure}  

 If $z_p \in \operatorname{Per}_{d^s+1}(f_{|_{\Lambda}})$, then the orbit of $\mathcal{F}^s(x)$ accumulates at the orbit of $\mathcal{F}^s(z_p)$. Thus, to get Equation~\eqref{e.case2}, it is enough to prove that
\begin{equation} \label{e.nablazp}
\overline{\mathcal{O}(\mathcal{F}^{s}(z_p))} \cap 
\operatorname{Per}_{d^s}(f_{|_{\Lambda}}) \not= \emptyset.
\end{equation}

Consider a periodic point $q \in P_2 \cup P_4$ close to $z_p$ such that $\mathcal{F}^s_{\varepsilon}(z_p)$ intersects topologically transversely $\nabla(q, \varepsilon)$ at a point $w_q$. Then, there is a point $z_q \in L_U(q)$ with $w_q \in \mathcal{F}^u_{\varepsilon}(z_q)$. By Lemma~\ref{l.per}, $z_q$ is not periodic, so $z_q$ lies in the unstable manifold of some periodic point of index $d^s$ in $\Gamma_q$. In this situation, Equations \eqref{e.zpwp2}  and \eqref{e.zpp2} hold replacing $z_p$, $w_p$, and $\tilde{p}\,$ by $z_q$,$w_q$, and $q$, respectively. Arguing as before, these equations leads to Equation~\eqref{e.nablazp} and, consequently, Equation~\eqref{e.case2}. This end the proof of Case (b).

\bigskip

\noindent
\textit{Proof of (c).} By Corollary~\ref{t.criterion}, to prove the $u$-minimality of $\Lambda$ 
it is enough to see that, for every $x\in \Lambda$, Equation~\eqref{e.case1} holds.

\smallskip

Consider the codimension one topological disk  
$\Delta(x, \varepsilon)$ as in Equation \eqref{e.Delta}. Fix $\tilde{\varepsilon}>0$ and $p \in P_3$ sufficiently close to $x$ so that $L_{\tilde{\varepsilon} ,U}(p)$ intersects topologically transversely $\Delta(x, \varepsilon)$ at a point $z_p$.

Let $l$ be the curve joining $z_p$ and $p$ inside $L_U(p)$. We can assume that there is no periodic points in the interior of $l$ (otherwise, we replace $p$ by a periodic point in $L_U(p)$ with this property).

\smallskip

There are three possible situations: 

\smallskip

\begin{enumerate}[(i)]
\item $\triangle (x, \varepsilon) \cap \operatorname{Per}_{d^s}(f_{|_{\Lambda}}) = \emptyset\ $ and $\ z_p \in W^s(p)\,$.
\smallskip

\item  $\triangle (x, \varepsilon) \cap \operatorname{Per}_{d^s}(f_{|_{\Lambda}}) = \emptyset\ $ and $\ z_p \in W^u(p)\,$.

\item  $\triangle (x, \varepsilon) \cap \operatorname{Per}_{d^s}(f_{|_{\Lambda}}) \not= \emptyset\ $. 
\end{enumerate}
\smallskip

In (i), we have $\operatorname{index}(p) = d^s +1$, so we can apply Lemma~\ref{l.a} to obtain   Equation~\eqref{e.case1}, and we are done.

In situation (ii), we can assume that $z_p \ne p$, otherwise we are also in situation (i). Observe that the segment $l \subset L_U(p)\,$ joining $p$ and $z_p$ is a subset of $W^u(p)$, so it is contained in the attractor $\Lambda$.

Let $\tilde{z}_p= f^{-2d}(z_p) \in l$ and $\tilde{\Delta}(x,\varepsilon)=f^{-2d}(\Delta(x, \varepsilon))$, 
where $\pi(p)=d$ (see Figure \ref{cc}). Denote by $\tilde{l}$ the curve joining 
$z_p$ and $\tilde{z}_p$ inside $l$. Since the curve $\tilde{l}$ is a subset of $\Lambda$, it is accumulated by periodic points of $\Lambda$. 

\smallskip

If a periodic point $q$ is close to $\tilde{z}_p$, then $L_U(q)$ meets $\tilde{\Delta}(x, \varepsilon)$ transversely. We can also assume that $L_U(q)$ intersect $\triangle(x, \varepsilon)$, since $p$ can be chosen arbitrarily close to $x$. We take such point $q \in P_3$ lying between two points $z_q,\tilde{z}_q$ in $L_U(q)$ such that 
\begin{equation} \label{e.pitchfork}
z_q \in L_U(q)\pitchfork  \Delta(x,\varepsilon) \quad
\mbox{and}
\quad
\tilde{z}_q \in L_U(q)\pitchfork  \tilde{\Delta}(x,\varepsilon)\ \ \mbox{(see Figure~\ref{cc})}.
\end{equation}

\smallskip

If the point $z_q$ belongs to the stable manifold of some 
periodic point of index $d^s+1$, then, as in (i), Lemma~\ref{l.a} implies Equation~\eqref{e.case1}, and we are done. Thus, we can assume that
$z_q$ does not belong to the stable manifold of any point of
$L_U(q)$. As $q \in P_3$, this assumption implies that $z_{q}$ lies in the unstable manifold of the extremal point of $\Gamma_{q}$ of index $d^s$. Consequently,  $\tilde{z}_{q}$ lies in the stable manifold of some periodic point of $\Gamma_{q}$
of index $d^s+1$. 

\smallskip

By the coherent choice of the central curves and Equation~\ref{e.pitchfork}, the curve $L_U(f^{2d}(q)) \subset f^{2d}(L_U(q))$ meets  $\Delta(x, \varepsilon)$ at the point $f^{2d}(\tilde{z}_q)$. Clearly, this intersection lies in the stable manifold of some periodic point of $\Gamma_{f^{2d}(q)}$ of index $d^s+1$. Hence, we are in the same situation as in Lemma~\ref{l.a}, replacing $z_p$ and $p$ by $f^{2d}(\tilde{z}_q)$ and $f^{2d}(q)$, respectively. As in the lemma, this leads to equation~\eqref{e.case1}, ending item (ii).

\smallskip

\begin{figure}[hnt!] \centering
\includegraphics[scale=0.7]{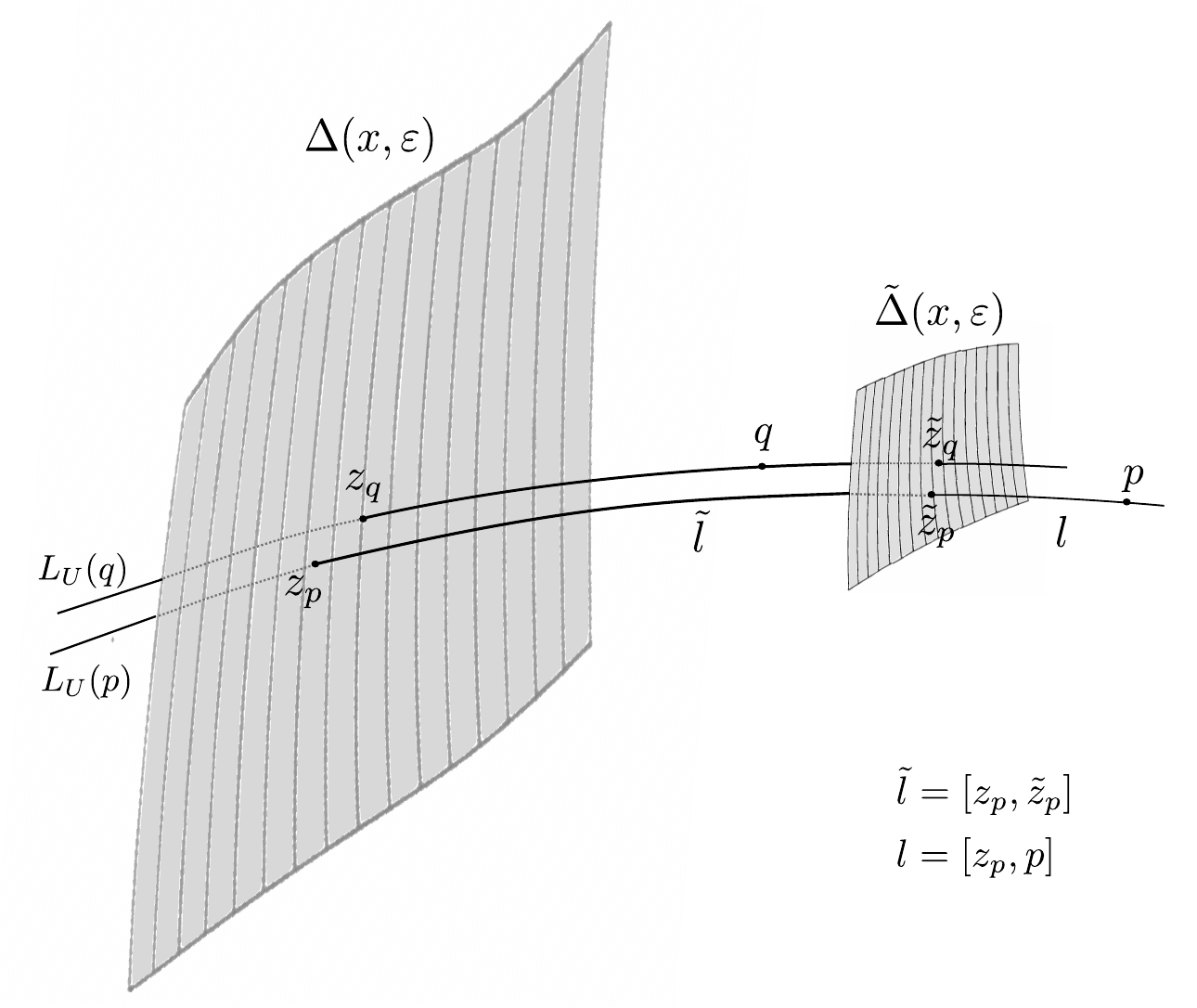}
\caption{Case (c)}
\label{cc}
\end{figure} 

We are left with the last possibility of item (iii). Let $a$ be a periodic point in $\triangle (x,\varepsilon)$. We can assume that $\varepsilon$ is sufficiently small so that, by Lemma \ref{l.per}, $a$ is the only periodic point in  $\triangle (a, \varepsilon)$. Then, we can choose $ \hat{x} \in \mathcal{F}^{u}_{\varepsilon}(a)$ and $\hat{\varepsilon} >0$ such that $\triangle(\hat{x}, \hat{\varepsilon})$ has no periodic points. It means that, replacing $x$ by $\hat{x}$ and $\varepsilon$ by $\hat{\varepsilon}$ in the begining of the proof, and following the same steps, situation (iii) do not occur. Arguing as before, we conclude the equivalent of Equation~\ref{e.case1} to the point $\hat{x}$, that is, $\overline{\mathcal{O}(\mathcal{F}^{u} (\hat{x}))} \cap 
\operatorname{Per}_{d^s+1}(f_{|_{\Lambda}}) \not= \emptyset.$ \\

As $\tilde{x} \in \mathcal{F}^{u}_{\varepsilon}(a)$, it implies that
\begin{equation}
\label{e.xxx} \overline{\mathcal{O}(\mathcal{F}^{u}(a))} \cap 
\operatorname{Per}_{d^s+1}(f_{|_{\Lambda}}) \not= \emptyset.
\end{equation} 

Finally, as $a \in \triangle (x, \varepsilon)$, there is $w \in \mathcal{F}_{\varepsilon}^u(x)$ such that $a \in \mathcal{F}_{\varepsilon}^s(w)$, so the orbit of $\mathcal{F}_{\varepsilon}^u(x)$ accumulates at the orbit of $\mathcal{F}_{\varepsilon}^u(a)$. This fact together with Equation~\ref{e.xxx} implies Equation~\ref{e.case1}. 

This completes the analysis of the three cases in Proposition~\ref{p.cases}.
\end{proof}

\section{Proof of Theorems A and B}
\bigskip

To get Theorem $A$ from Proposition \ref{p.cases}, it is left to  show that a generic $s$-minimal (resp. $u$-minimal) attractor is generically $s$-minimal (resp. $u$-minimal). Theorem $B$ follows from the fact that a generically $s$-minimal (resp. $u$-minimal) attractor that is robustly transitive is also robustly $s$-minimal (resp. $u$-minimal). 

\begin{lema} \label{big} 
Let $f \in \mathcal{R}$ and $\Lambda_f(U)$ be an $s$-minimal isolated partially hyperbolic set with one-dimensional center bundle. Let
 $\mathcal{U}$ be a compatible neighborhood of $f$.
For every hyperbolic periodic point $p\in \Lambda_f(U)$, 
there is an open set $\mathcal{W}_p \subset \mathcal{U}$, 
with $f \in \overline{\mathcal{W}_p}$, 
such that, for every $g \in \mathcal{W}_p$ and every strong stable disk $D$ centered at some point $x\in \Lambda_g(U)$, we have
$$
H(p_g,g) \subset \overline{\mathcal{O}^-_g(D)}.
$$
 Moreover, if $\operatorname{index}(p)=d^s$, then $\mathcal{W}_p$ is a neighborhood of $f$.

\end{lema}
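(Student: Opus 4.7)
The key mechanism, valid for every $g$ in the compatible neighborhood $\mathcal{U}$ of $f$, is the following accumulation property: given a strong stable disk $D$ centered at $x \in \Lambda_g(U)$, the backward iterates $g^{-n}(D)$ are stable disks centered at $g^{-n}(x) \in \Lambda_g(U)$ whose intrinsic radii grow exponentially in $n$. For any $\alpha$-limit point $z \in \alpha_g(x)$ and any subsequence $g^{-n_k}(x) \to z$, continuity of the strong stable lamination (Remark~\ref{r.continuity}) forces $g^{-n_k}(D)$ to exhaust, on compact pieces and in the $C^1$-topology, the whole leaf $\mathcal{F}^s(z,g)$. Hence
\[
\mathcal{F}^s(z,g) \subset \overline{\mathcal{O}^-_g(D)} \quad \text{for every } z \in \alpha_g(x).
\]
Specialising to $g = f$, pick such a $z$. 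By $f$-invariance of $\alpha_f(x)$ each $f^i(z)\in \alpha_f(x)$, so the $s$-minimality of $\Lambda_f(U)$ yields
\[
\Lambda_f(U) \;=\; \bigcup_{i=1}^{d} \overline{\mathcal{F}^s(f^i(z),f) \cap \Lambda_f(U)} \;\subset\; \overline{\mathcal{O}^-_f(D)}.
\]
In particular $p \in \overline{\mathcal{O}^-_f(D)}$; re-running the accumulation argument at $p$ along a sequence $y_k \in f^{-n_k}(D)$ with $y_k \to p$ and $n_k \to \infty$ gives $\mathcal{F}^s(p,f) \subset \overline{\mathcal{O}^-_f(D)}$.

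\textbf{Case $\operatorname{index}(p)=d^s$.} Here $W^s(p,f) = \mathcal{F}^s(p,f)$, so some backward iterate $f^{-N}(D)$ is a stable disk of large intrinsic radius passing near $p$. Tangent to the extended bundle $E^s$ by Proposition~\ref{ttes}, it crosses $W^u_{\mathrm{loc}}(p,f)$ topologically transversely, since $E^s$ and $E^c \oplus E^u$ sum to $T_pM$. Transversality is open, so for every $g$ in some full $C^1$-neighborhood $\mathcal{W}_p \subset \mathcal{U}$ of $f$, $g^{-N}(D) \pitchfork W^u_{\mathrm{loc}}(p_g,g) \neq \emptyset$. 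The dual $\lambda$-lemma then yields $\mathcal{F}^s(p_g,g) = W^s(p_g,g) \subset \overline{\mathcal{O}^-_g(D)}$; the $g^{-1}$-invariance of the closure together with periodicity of $p_g$ then give $W^s(\mathcal{O}_g(p_g)) \subset \overline{\mathcal{O}^-_g(D)}$, and $H(p_g,g) \subset \overline{W^s(\mathcal{O}_g(p_g))}$ closes this case.

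\textbf{Case $\operatorname{index}(p)=d^s+1$ and main obstacle.} The difficulty, which is the main obstacle of the lemma, is that $W^s(p) \supsetneq \mathcal{F}^s(p)$: the accumulation mechanism only places the strong stable lamination of $p_g$ inside $\overline{\mathcal{O}^-_g(D)}$ and misses the central direction of $W^s(p_g)$. This forces $\mathcal{W}_p$ to shrink from a full neighborhood to an open set with $f \in \overline{\mathcal{W}_p}$. I would close the gap via Proposition~\ref{Bld}: choose an auxiliary periodic point $q\in\Lambda_f(U)$ of index $d^s$ (supplied by Proposition~\ref{Ab}(2), with $\Lambda_f(U)$ non-hyperbolic as in the applications to Theorems A and B) and take $\mathcal{W}_p := \mathcal{V}_{q,p} \cap \mathcal{W}_q$, where $\mathcal{V}_{q,p}$ is the open set of Proposition~\ref{Bld} and $\mathcal{W}_q$ is the full neighborhood produced by the previous case applied to $q$. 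On $\mathcal{V}_{q,p}$ one has $W^s(\mathcal{O}_g(p_g)) \subset \overline{W^s(\mathcal{O}_g(q_g))} = \overline{\mathcal{F}^s(\mathcal{O}_g(q_g))}$; the previous case gives $\mathcal{F}^s(\mathcal{O}_g(q_g)) \subset \overline{\mathcal{O}^-_g(D)}$, so $W^s(\mathcal{O}_g(p_g)) \subset \overline{\mathcal{O}^-_g(D)}$ and $H(p_g,g)\subset\overline{\mathcal{O}^-_g(D)}$, as desired.
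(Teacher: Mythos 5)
Your outline of the $\operatorname{index}(p)=d^s+1$ reduction to the $\operatorname{index}(p)=d^s$ case via an auxiliary saddle $q$ and Proposition~\ref{Bld} matches the paper exactly, and your identification of why $W^s(p)\supsetneq\mathcal{F}^s(p)$ forces $\mathcal{W}_p$ to be only densely close to $f$ is correct. However, the base case has a genuine gap: the neighborhood you construct depends on the disk $D$, whereas the lemma requires a single $\mathcal{W}_p$ that works simultaneously for every strong stable $g$-disk $D$ centered at every point of $\Lambda_g(U)$ and for every $g\in\mathcal{W}_p$.

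Concretely, your step ``some backward iterate $f^{-N}(D)$ is a stable disk passing near $p$; transversality is open, so for $g$ near $f$, $g^{-N}(D)\pitchfork W^u_{\mathrm{loc}}(p_g,g)\neq\emptyset$'' has two problems. First, the integer $N=N(D)$ varies with $D$ (it is extracted from the convergence $p\in\overline{\mathcal{O}^-_f(D)}$, which gives no uniform rate), so the ``transversality is open'' neighborhood also varies with $D$ and a priori shrinks to $\{f\}$ as $D$ ranges over all stable disks. Second, as $g$ moves away from $f$ the object $D$ itself must change (it is a $g$-disk centered on $\Lambda_g(U)$), so the robustness-of-transversality argument does not apply to a fixed finite configuration. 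The paper circumvents both difficulties by never iterating $D$ toward $p$: instead it proves the \emph{uniform} statement that for every $y\in\Lambda_f(U)$ the whole leaf $\mathcal{F}^s(y,f)$ meets $W^u_\varepsilon(\mathcal{O}_f(p),f)$ transversely (via $s$-minimality and Remark~\ref{r.intersecta.unstable}), extracts a finite subcover $\{U_{x_i}\}$ of $\Lambda_f(U)$ and corresponding $\{\mathcal{V}_{x_i}\}$, uses upper semicontinuity of $g\mapsto\Lambda_g(U)$ to keep $\Lambda_g(U)\subset\bigcup_i U_{x_i}$, and obtains one $\mathcal{V}$ that works for all $y\in\Lambda_g(U)$. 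Only then is a disk $D$ introduced: one picks $y\in\alpha_g(x)$ to get $\overline{\mathcal{O}_g(\mathcal{F}^s(y,g))}\subset\overline{\mathcal{O}^-_g(D)}$ (your ``accumulation mechanism''), and the uniform transversal intersection of $\mathcal{F}^s(y,g)$ with $W^u_\varepsilon(p_g)$ plus the $\lambda$-lemma finishes. To repair your proof, replace the ``push $D$ near $p$'' step with this compactness argument: your accumulation mechanism is exactly the right last step, but it must be preceded by the leaf-level, $D$-independent robustness statement.
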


\begin{re} \em{Lemma~\ref{big} has a dual version for $u$-minimal sets and the forward orbit $\overline{\mathcal{O}_g^+(D)}$ of an unstable disk $D$. The proof is analogous.} \end{re}  

\begin{proof}[Proof of Lemma~\ref{big}]

First we consider the case where $p \in \Lambda_f(U)$ has index~$d^s$.

Remark~\ref{cont.isolated} implies the upper semicontinuity of the sets $\Lambda_g(U)$, so   we can assume that every diffeomorphism in $\mathcal{R}$ is a continuity point of the map $g \mapsto\Lambda_g(U)$.
  
Consider the local unstable manifold  
$W^u_{\varepsilon}(\mathcal{O}_f(p))$.
By $s$-minimality, for each $x \in \Lambda_f(U)$ the leaf $\mathcal{F}^{s}(x)$ 
intersects $W^u_{\varepsilon}(\mathcal{O}_f(p))$ transversely (see Remark \ref{r.intersecta.unstable}). 
Hence, by the continuity\footnote{Here we are using the two kinds of continuity that $\mathcal{F}^s$ admits. First, the continuous dependence of $\mathcal{F}^s(g)$ on $g$ (see Remark~\ref{r.continuity}), and secondly the continuous dependence of $\mathcal{F}^s_r(x,g)$ on $x \in \Lambda_g(U)$.} of the strong stable lamination, there are open neighborhoods $U_x$ of $x$  
and $\mathcal{V}_x$ of $f$ such that, for every $y \in U_x\cap \Lambda_g(U)$ and every $g \in \mathcal{V}_x$, the leaf $\mathcal{F}^{s}(y,g)$  
intersects $W^u_{\varepsilon}(\mathcal{O}_g(p_g), g)$
transversely.

\smallskip

Covering the compact set $\Lambda_f(U)$ with the open sets $U_x $, we can extract a finite covering
$B = \bigcup_{i=1}^m U_{x_i}$ of $\Lambda_f(U)$. Let $\mathcal{V}$ denote the open set $\bigcap_{i=1}^m \mathcal{V}_{x_i}$.
Since $f$ is a continuity point of the map  
$g \mapsto\Lambda_g(U)$, after shrinking $\mathcal{V}$ if necessary, the inclusion 
$\Lambda_g(U) \subset B$ holds for every $g\in \mathcal{V}$.
By construction,
for every point 
$y \in \Lambda_g(U)$  and every 
$g \in \mathcal{V}$, 
the leaf $\mathcal{F}^{s}(y,g)$ intersects  $W^u_{\varepsilon}(\mathcal{O}(p_g), g)$ transversely. 

\smallskip

Applying the $\lambda$-Lemma to this transverse intersection we get\\

$W^s (\mathcal{O}_g (p_g), g) = \mathcal{O}_g(\mathcal{F}^s( p_g, g)) \subset
\overline{\mathcal{O}_g(\mathcal{F}^{s}(y,g))}$, and thus
\begin{equation} \label{e.hoo}
 H(p_g,g)\subset
\overline{ \mathcal{O}_g(\mathcal{F}^{s}(p_g , g))} 
\subset 
\overline{\mathcal{O}_g(\mathcal{F}^{s}(y,g))}.
\end{equation}

Note that if $D$ is any strong stable disk centered at some point $x \in \Lambda_g(U)$ and $y$ is an accumulation point of the pre-orbit of $x$, then 
\begin{equation}\label{e.hoo2}
\overline{\mathcal{O}_g(\mathcal{F}^{s}(y,g))} \subset \overline{\mathcal{O}^-_g(D)}.
\end{equation}

Combining Equations \eqref{e.hoo} and \eqref{e.hoo2}, we conclude the theorem for the case of $\operatorname{index}(p)=d^s$, where  $\mathcal{W}_p = \mathcal{V}$. Observe that in this case $\mathcal{W}_p$ is a neighborhood of $f$.

\smallskip

When $\operatorname{index}(p)=d^s+1$, consider another periodic point  $q \in \Lambda_f(U)$  with index $d^s$ (the existence of such point is assured by (2) of Proposition~\ref{Ab}). There is an open set $\mathcal{V}_{p,q}$ of $\mathcal{U}$ such that, if $g \in \mathcal{V}_{p,q}$, then\\

$H(p_g,g) \subset \overline{W^s(\mathcal{O}_g(p_g),g)} \subset   \overline{ \mathcal{O}_g(\mathcal{F}^{s}(q_g , g))}.$ (see Proposition \ref{Bld})\\

Applying the first part of the proof for $q$ (which has index $d^s$), we obtain a neighborhood  $\mathcal{W}_q$ of $f$ such that, for every $g \in \mathcal{W}_q$, it holds that 
$\overline{ \mathcal{O}_g(\mathcal{F}^{s}(q_g , g))} 
\subset \overline{\mathcal{O}^-_g(D)}$. By setting $\mathcal{W}_p = \mathcal{W}_q \cap \mathcal{V}_{p,q}$, we conclude that the inclusion $H(p_g,g)\subset \overline{\mathcal{O}^-_g(D)}$ holds for every $g \in \mathcal{W}_p$.

Recall that $f \in \overline{\mathcal{V}_{p,q}}$ and $\mathcal{W}_q$ is a neighborhood of $f$, so $f \in \overline{\mathcal{W}_p}$. 
\end{proof}

 Now we are ready to finish the proof of Theorems A and B.
 
\begin{proof}[Proof of Theorem A]  By proposition~\ref{p.cases}, there is a residual subset $\mathcal{G} \subset \mathcal{R}$ of $\operatorname{GTPHA}_1(U)$ such that, if $f \in \mathcal{G}$, then $\Lambda_f(U)$ is either $s$-minimal or $u$-minimal.

Suppose that $f \in  \mathcal{G}$ is such that $\Lambda_f(U)$ is $s$-minimal (the case where $\Lambda_f(U)$ is $u$-minimal goes similarly). Let $p \in \Lambda_f(U)$ be a periodic point of index $d^s$ (given by item (2) of Proposition~\ref{Ab}) and $\mathcal{W}_p$ be the neighborhood of $f$ given in Lemma~\ref{big}. For every $g \in \mathcal{W}_p$ and every disk $D= \mathcal{F}^s_r(x,g)$, with $x \in \Lambda_g(U)$, it holds that $ H(p_g,g) \subset \overline{\mathcal{O}^-_g(D)}.$

\smallskip

By Remark~\ref{5gta}, a transitive attractor is, generically, a homoclinic class. Hence, there is a residual subset $\mathcal{Z}$ of $\mathcal{W}_p$ such that, for every $g \in \mathcal{Z}$ and every disk $D= \mathcal{F}^s_r(x,g)$, with $x \in \Lambda_f(U)$, it holds that $\Lambda_g(U) \subset \overline{\mathcal{O}^-_g(D)}$.
In particular, it holds that $\Lambda_g(U) \subset \overline{\mathcal{O}(\mathcal{F}^{s}(x,g))} $ for every $x \in \Lambda_g(U)$. By Corollary~\ref{t.criterion}, $\Lambda_g(U)$ is $s$-minimal. Since it holds for every $g$ in $\mathcal{G} \cap \mathcal{W}_p$, the set $\Lambda_f(U)$ is generically $s$-minimal.
\end{proof}

\begin{proof}[Proof of Theorem B]

Let $\mathcal{B}$ be the subset of $\operatorname{RTPHA}_1(U)$ given by Corollary~\ref{CCCCC}. We can assume that, for every $f \in B$, the attractor $\Lambda_f(U)$ has  periodic points with index $d^s$ and $d^s+1$, as it holds generically (see Proposition~\ref{Ab}) and the existence of such points persist by small perturbations.  By Theorem A, there are two locally generic sets $\mathcal{G}_s$ and $\mathcal{G}_u$ such that, if $g \in \mathcal{G}_s$ then $\Lambda_g(U)$ is a generically $s$-minimal set, and if $g \in \mathcal{G}_u$ then $\Lambda_g(U)$ is a generically $u$-minimal set. 

Given $f \in \mathcal{B}\cap \mathcal{G}_s$, and $p \in \Lambda_f(U)$ with index $d^s$, we apply Lemma~\ref{big} to obtain a neighborhood $\mathcal{W}_p$ of $f$ satisfying the following. For every $g \in \mathcal{W}_p$ and every disk $D= \mathcal{F}^s_r(x,g)$, with $x \in \Lambda_g(U)$, it holds that $ H(p_g,g) \subset \overline{\mathcal{O}^-_g(D)}$.

Consider the open neighborhood $\mathcal{B}_f = \mathcal{W}_p \cap \mathcal{B}\, $ of $f$. By Corollary~\ref{CCCCC}, $\Lambda_g(U) = H(p_g,g)$ for every $g \in \mathcal{B}_f$. Hence, $\Lambda_g(U) \subset \overline{\mathcal{O}^-_g(D)}$ for every $x \in \Lambda_g(U)$. In particular, it holds that $\Lambda_g(U) \subset \overline{\mathcal{O}(\mathcal{F}^{s}(x,g))} $ for every $x \in \Lambda_g(U)$. By Remark~\ref{otimo}, Theorem~\ref{t.criterion2} can be applied in an open subset $\mathcal{C}_f$ of $\mathcal{B}_f$, so that $\Lambda_g(U)$ is an $s$-minimal set for every $g$ in $\mathcal{C}_f$.

Setting $\displaystyle \mathcal{B}_s = \bigcup_{f \in \mathcal{B}\cap \mathcal{G}_s} \mathcal{C}_f$, it gives an open subset of $\mathcal{B}$ such that, for every $g \in \mathcal{B}_s$, the attractor $\Lambda_g(U)$ is robustly $s$-minimal.  In the same way, we can obtain an open subset $\mathcal{B}_u$ of $\mathcal{B}$ such that, for every $g \in \mathcal{B}_u$, the attractor $\Lambda_g(U)$ is robustly $u$-minimal. By construction, the union $\mathcal{B}_s \cup \mathcal{B}_u$ is an open and dense subset of $\operatorname{RTPHA}_1(U)$. \end{proof}

\section{Final Considerations} \label{fc}

In this section we discuss the minimality in some examples and see why it is not possible to extend our results from the setting of robustly transitive attractors to the one of robustly transitive sets.
\smallskip

The main examples of non-hyperbolic robustly transitive diffeomorphisms are the following:

\begin{enumerate}

\item skew-products: there are two types. The Shub's example in \cite{B18} is derived from the product map $(A,B): \mathbb{T}^2 \times \mathbb{T}^2 \to \mathbb{T}^2 \times \mathbb{T}^2$ of two Anosov diffeomorphisms. The Bonatti-Díaz example in $\mathbb{T}^n \times N$, where $N$ is any compact manifold \cite{B19}.

\item DA diffeomorphism  (Mañé's example in in $\mathbb{T}^3$ \cite{B16}, and the generalizations of Bonatti-Viana in $\mathbb{T}^4$ \cite{B27} ).

\item perturbations of the time-1 map of Anosov flows, \cite{B19}.

\end{enumerate}

An easy way to produce examples of proper robustly transitive attractors is to consider the product map $F = f \times g$ of a robustly transitive diffeomorphism $f: M \to M$ with a north-south dynamics  $g : S^1 \to S^1$ in the circle.

 \smallskip

Denote by $\Lambda_F$ the attractor $M \times \{p\}$ of $F$, where $p$ is the hyperbolic attracting fixed point of $g$. If $\mathcal{F}^s$ (resp. $\mathcal{F}^u$) is minimal for $f$, then $\Lambda_F$ is robustly $s$-minimal (resp. $u$-minimal).

\smallskip

A different class of examples is obtained directly by skew-products as in (1) above. Instead of an Anosov system as the factor map of the skew-product, we consider a map $f: M \to M$  with a hyperbolic transitive attractor $\Lambda \subset M$. In \cite{B19} it is proved that the product of this map with the identity on $S^1$ can be perturbed to obtain  robustly non-hyperbolic transitive attractors homeomorphic to $\Lambda \times S^1$. These attractors have one-dimensional center bundle and a dense amount of compact central curves. By item (4) of Proposition~\ref{p.cases}, these attractors provide examples which are both $u$- and $s$-minimal.
\smallskip

As far as we know, the following question is still  open.

\begin{q} Is  there a robustly transitive diffeomorphism for which one of the minimality fails?  
\end{q}

If such $f$ do exist, then the construction above for $F = f \times g\ $ and $\tilde{F} = f^{-1} \times g\ $ gives examples of strictly $s$-minimal and striclty $u$-minimal attractors. By "strictly" we mean that the attractor has only that kind of minimality. Unfurtunately, every known example admits the two kinds of minimality. This lead us to pose the following question.

\begin{q} Is there a strictly $u$-minimal (resp. $s$-minimal) attractor?   
\end{q}

 Observe that if Question 2 has a negative answer, so has Question 1. 

\smallskip

Concerning possible generalizations of Theorems A and B, one may ask if these results could be obtained to the broader setting of robustly or generically transitive sets (instead of attractors). Next we show that, by an example in \cite{B20}, it is not possible (at least for the generic case).

\smallskip

In \cite{B20} it is proved that every manifold $M$ of dimension $\geq 3$ supports generically transitive sets that is not robustly transitive. The construction gives a diffeomorphism $f$ and a dense subset of a $C^1$ neighborhood of $f$ for which the semicontinuation $\Lambda_g(U)$ of the isolated set $\Lambda_f(U)$ has an isolated point (so it is not transitive). In addition, $\Lambda_f(U)$ is strongly partially hyperbolic with one-dimensional center bundle, which is the case we treat in this paper. 

\smallskip

In their construction, there are two hyperbolic periodic points  $p$ and $q$, with $\operatorname{index}(p) = \operatorname{index}(q)+1$, that lie in the ``corner'' of the set $\Lambda_f(U)$ (see the precise definition of \emph{cuspidal point} in \cite{B20}). These points have the property that $\mathcal{F}^s(p) \cap \Lambda_f(U) = \{p\}$ and $\mathcal{F}^s(q) \cap \Lambda_f(U) = \{q\}$, preventing both laminations to be minimal. Moreover, being a cuspidal point is a robust property, so the continuations $\Lambda_g(U)$ do not have minimal laminations either.

\smallskip

\section*{Acknowledgements}

This work is part of my PhD Thesis at PUC-Rio. I specially thank Flavio Abdenur for his commitment and support during my stay at PUC. I'm also very grateful to Lorenzo J. Díaz, who advise me on the last year of my doctoral program. I surely learned a lot from them.  I'm also grateful to the other professors, students, and the staff of the Department of Mathematics of PUC-Rio.

\end{document}